\newtheorem{theorem}{Theorem}[section]
\newtheorem{proposition}{Proposition}[section]
\newtheorem{remark}{Remark}[section]
\newtheorem{lemma}{Lemma}[section]
\numberwithin{equation}{section}
\def\d{\mathrm{d}}
\def\no{\nonumber}
\def\R{\mathbb{R}}
\def\eps{\epsilon}
\def\div{\mathrm{div}}
\def\dr{\mathrm{d}}
\def\v{\mathrm{u}}
\def\l{\left\langle}
\def\r{\right\rangle}
\def\A{\mathrm{A}}
\def\N{\mathrm{N}}
\def\g{\mathrm{g}}
\def\B{\mathrm{B}}
\newcounter{wronumber}\setcounter{wronumber}{1}
\begin{document}
\title[Parabolic-hyperbolic incompressible liquid crystal model]
			{On well-posedness of Ericksen-Leslie's hyperbolic incompressible liquid crystal model}

\author[N. Jiang]{Ning Jiang}
\address[Ning Jiang]{\newline School of Mathematics and Statistics, Wuhan University, Wuhan, 430072, P. R. China}
\email{njiang@whu.edu.cn}

\author[Y.-L. Luo]{Yi-Long Luo}
\address[Yi-Long Luo]
		{\newline School of Mathematics and Statistics, Wuhan University, Wuhan, 430072, P. R. China}
\email{yl-luo@amss.ac.cn}
\thanks{ \today}

\maketitle

\begin{abstract}
 We study the Ericksen-Leslie's hyperbolic incompressible liquid crystal model. Under some constraints on the Leslie coefficients which ensure the basic energy law is dissipative, we prove the local-in-time existence and uniqueness of the classical solution to the system with finite initial energy. Furthermore, with an additional assumption on the coefficients which provides a damping effect, and the smallness of the initial energy, the unique global classical solution can be established.
\end{abstract}





\section{Introduction}

The hydrodynamic theory of incompressible liquid crystals was established by Ericksen \cite{Ericksen-1961-TSR, Ericksen-1987-RM, Ericksen-1990-ARMA} and Leslie \cite{Leslie-1968-ARMA, Leslie-1979} in the 1960's (see also Section 5.1 of \cite{Lin-Liu-2001} ). The so-called Ericksen-Leslie system consists of the following equations of $(\rho(x,t), \v(x,t), \dr(x,t)$, where $(x,t)\in \mathbb{R}^n\times \mathbb{R}^+$ with $n \geq 2$ :
\begin{align}\label{EL-general}
  \left\{ \begin{array}{c}
    \rho \dot\v = \rho \mathrm{F} + \div \hat{\sigma}\,, \div \v = 0\,,\\
    \rho_1 \dot\omega = \rho_1 \mathrm{G} + \hat{\mathrm{g}} + \div \pi\,.
  \end{array}\right.
\end{align}
The system \eqref{EL-general} represents the conservations laws of linear momentum and angular momentum respectively. Here, $\rho$ is the constant fluid density, $\rho_1 \geq 0$ is the inertial constant,  $\v = (\v_1,\cdots, \v_n)^\top$ is the flow velocity, $\dr = (\dr_1,\cdots, \dr_n)^\top$ is the direction field of the liquid molecules with the constraint $|\dr|=1$. Furthermore, $\hat{\mathrm{g}}$ is the intrinsic force associated with $\dr$, $\pi$ is the director stress, $\mathrm{F}$ and $\mathrm{G}$ are external body force and external director body force, respectively. The superposed dot denotes the material derivative $\partial_t + \v\cdot\nabla$, and
$$\omega = \dot \dr = \partial_t \dr + \v\cdot \nabla \dr$$
represents the material derivative of $\dr$.

The constitutive relations for $\hat{\sigma}$, $\pi$ and $\hat{\g}$ are given by:
\begin{equation}\label{Constitutive}
  \begin{aligned}
    \hat{\sigma}_{ij}& = -p\delta_{ij} + \sigma_{ij} - \rho \tfrac{\partial W}{\partial\dr_{k,i}}\dr_{k,j}\,,\\
    \pi_{ij}& = \beta_i \dr_j + \rho \tfrac{\partial W}{\partial\dr_{j,i}}\,,\\
    \hat{\g}_{ij} & = \gamma \dr_i - \beta_j \dr_{i,j} - \rho \tfrac{\partial W}{\partial \dr_i} + \g_i\,.
  \end{aligned}
\end{equation}
Here $p$ is the pressure, the vector $\beta=(\beta_1\,,\cdots\,,\beta_n)^\top$ and the scalar function $\gamma$ are Lagrangian multipliers for the constraint $|\dr|=1$, and $W$ is the Oseen-Frank energy functional for the equilibrium configuration of a unit director field:
\begin{equation}\label{Oseen-Frank-energy}
  \begin{aligned}
    2W =& k_1 (\div \dr)^2 + k_2 |\dr \cdot (\nabla \times \dr)|^2 + k_3 |\dr \times (\nabla \times \dr)|^2 \\
    & + (k_2 + k_4) \left[ \mathrm{tr} (\nabla \dr)^2 - (\div \dr)^2 \right]\, ,
  \end{aligned}
\end{equation}
where the coefficients $k_1,\ k_2,\ k_3$, and $k_4$ are the measure of viscosity, depending on the material and the temperature.

The kinematic transport $\g$ is given by:
\begin{equation}\label{hat-g}
  \g_i = \lambda_1 \N_i + \lambda_2 \dr_j \A_{ij}
\end{equation}
which represents the effect of the macroscopic flow field on the microscopic structure. The following notations
\begin{equation*}
\begin{aligned}
  \A = \tfrac{1}{2}(\nabla \v + \nabla^\top\v)\,,\quad \B= \tfrac{1}{2}(\nabla \v - \nabla^\top\v)\,, \quad \N = \omega + \B \dr\,,
\end{aligned}
\end{equation*}
represent the rate of strain tensor, skew-symmetric part of the strain rate and the rigid rotation part of director changing rate by fluid vorticity, respectively. Here $\A_{ij} = \tfrac{1}{2} (\partial_j \v_i + \partial_i \v_j)$, $ \B_{ij} = \tfrac{1}{2} (\partial_j \v_i - \partial_i \v_j) $, $(\B \dr)_i =\B_{ki} \dr_k$. The material coefficients $\lambda_1$ and $\lambda_2$ reflects the molecular shape and the slippery part between the fluid and the particles. The first term of \eqref{hat-g} represents the rigid rotation of the molecule, while the second term stands for the stretching of the molecule by the flow.

The stress tensor $\sigma$ has the following form:
\begin{equation}\label{Extra-Sress-sigma}
  \begin{aligned}
    \sigma_{ji}=  \mu_1 \dr_k \A_{kp}\dr_p  \dr_i \dr_j + \mu_2  \dr_j \N_i  + \mu_3 \dr_i \N_j  + \mu_4 \A_{ij} + \mu_5 \A_{ik}\dr_k \dr_j   + \mu_6 \dr_i \A_{jk}\dr_k \,.
  \end{aligned}
\end{equation}
These coefficients $\mu_i (1 \leq i \leq 6)$ which may depend on material and temperature, are usually called Leslie coefficients, and are related to certain local correlations in the fluid. Usually, the following relations are frequently introduced in the literature.
\begin{equation}\label{Coefficients-Relations}
  \lambda_1=\mu_2-\mu_3\,, \quad\lambda_2 = \mu_5-\mu_6\,,\quad \mu_2+\mu_3 = \mu_6-\mu_5\,.
\end{equation}
The first two relations are necessary conditions in order to satisfy the equation of motion identically, while the third relation is called {\em Parodi's relation}, which is derived from Onsager reciprocal relations expressing the equality of certain relations between flows and forces in thermodynamic systems out of equilibrium. Under Parodi's relation, we see that the dynamics of an incompressible nematic liquid crystal flow involve five independent Leslie coefficients in \eqref{Extra-Sress-sigma}.

For simplicity, in this paper, we assume the external forces vanish, that is, $\mathrm{F}=0$, $\mathrm{G}=0$, and the fluid density $\rho=1$. Moreover, we take $k_1 = k_2 = k_3 = 1$, $k_4 = 0$ in \eqref{Oseen-Frank-energy}, then
$$ 2W = |\dr \cdot (\nabla \times \dr)|^2 + |\dr \times (\nabla \times \dr)|^2 + \mathrm{tr} (\nabla \dr)^2\, . $$
Since $|\dr| = 1$, this can be further simplified as $ 2W = |\nabla \dr|^2 $, which implies
\begin{equation}\label{Special-OF}
  \tfrac{\partial W}{\partial \dr_i} = 0,\  \tfrac{\partial W}{\partial(\partial_j \dr_i)} = \partial_j \dr_i\, .
\end{equation}
Taking $\beta_i=0$, then $\pi_{ij}$ reduces to
\begin{equation}\label{d-Dissipative-term}
  \partial_j \pi_{ji} = \partial_j \left( \tfrac{\partial W}{\partial(\partial_j \dr_i)} \right) = \partial_j (\partial_j \dr_i) = \Delta \dr_i\,.
\end{equation}
Thus the second evolution equation of \eqref{EL-general} is
\begin{equation}\label{d-equation-Hyper-Parab}
  \rho_1\ddot{\dr} = \Delta \dr + \gamma \dr + \lambda_1 (\dot{\dr} + \mathrm{B} \dr) + \lambda_2 \A \dr \, .
\end{equation}
Since $|\dr|=1$, it is derived from multiplying $\dr$ in \eqref{d-equation-Hyper-Parab} that
\begin{equation}\label{Lagrange-Multiplier}
  \gamma \equiv \gamma (\v, \dr, \dot{\dr}) = - \rho_1 |\dot{\dr}|^2 + |\nabla \dr|^2 - \lambda_2 \dr^\top \A \dr\, .
\end{equation}
The detailed derivation of \eqref{Lagrange-Multiplier} will be given later with deeper comments on its meanings and implications.

Combining the first equality of \eqref{Constitutive} and \eqref{Special-OF}, one can obtain that
\begin{equation}\no
    \div \hat{\sigma} = - \nabla p - \div (\nabla \dr \odot \nabla \dr)  + \div \sigma\,,
\end{equation}
where $(\nabla \dr \odot \nabla \dr)_{ij} = \sum_k \partial_i \dr_k \partial_j \dr_k\,.$ On the other hand, it can yield that by \eqref{Extra-Sress-sigma}
\begin{equation}
  \no \div \sigma= \tfrac{1}{2} \mu_4 \Delta \v + \div \tilde{\sigma}\,,
\end{equation}
where
\begin{align}
  \no \tilde{\sigma}_{ji} \equiv \big{(}\tilde{\sigma}(\v, \dr, \dot{\dr})\big{)}_{ji}  = & \mu_1 \dr_k \dr_p \A_{kp} \dr_i \dr_j + \mu_2 \dr_j (\dot{\dr}_i + \B_{ki} \dr_k)  \\
  \no &+ \mu_3 \dr_i (\dot{\dr}_j + \B_{kj} \dr_k)    +  \mu_5 \dr_j \dr_k \A_{ki} + \mu_6 \dr_i \dr_k \A_{kj} \,.
\end{align}

Hence,  Ericksen-Leslie's hyperbolic liquid crystal model reduces to the following form:
\begin{equation}\label{PHLC}
  \begin{aligned}
    \left\{ \begin{array}{c}
      \partial_t \v + \v \cdot \nabla \v - \frac{1}{2} \mu_4 \Delta \v + \nabla p = - \div (\nabla \dr \odot \nabla \dr) + \div \tilde{\sigma}\, , \\
      \div \v = 0\, ,\\
     \rho_1 \ddot{\dr} = \Delta \dr + \gamma \dr + \lambda_1 (\dot{\dr} + \B \dr) + \lambda_2 \A \dr\, ,
    \end{array}\right.
  \end{aligned}
\end{equation}
on $ \R^n \times \R^+$ with the constraint $|\dr|=1$, where the Lagrangian multiplier $\gamma$ is given by \eqref{Lagrange-Multiplier}.

In this paper, our main concern is the Cauchy problem of \eqref{PHLC} with the initial data:
\begin{equation}\label{Inital-Data}
  \v|_{t=0} = \v^{in}(x),\ \dot{\dr}|_{t=0} = {\tilde\dr}^{in}(x),\ \dr|_{t=0} = \dr^{in}(x)\,,
\end{equation}
where $\dr^{in}$ and ${\tilde\dr}^{in}$ satisfy the constraint and compatibility condition:
\begin{equation}\label{Inital-Data-Compatablity}
  |\dr^{in}|=1\,,\quad {\tilde\dr}^{in}\cdot \dr^{in}=0\,.
\end{equation}

 Let us remark that a particularly important special case of Ericksen-Leslie's model \eqref{PHLC} is that the term $\div \tilde{\sigma}$ vanishes. Namely, the coefficients $\mu_i's$, $(1\leq i \leq 6, i \neq 4)$ of $\div \tilde{\sigma}$ are chosen as 0, which immediately implies $\lambda_1 = \lambda_2 = 0$. Consequently, the system \eqref{PHLC} reduces to a model which is Navier-Stokes equations coupled with a wave map from $\mathbb{R}^n$ to $\mathbb{S}^{n-1}$:
\begin{align}\label{NS-WM}
  \left\{ \begin{array}{c}
    \partial_t \v + \v \cdot \nabla \v + \nabla p = \frac{1}{2}\mu_4 \Delta \v - \div (\nabla \dr \odot \nabla \dr )\, , \\
    \div \v =0\, ,\\
    \rho_1\ddot{\dr} = \Delta \dr + (-\rho_1 |\dot{\dr}|^2+|\nabla \dr|^2) \dr\, .
  \end{array}\right.
\end{align}

\subsection{$\rho_1=0, \lambda_1=-1$, parabolic model}

When the coefficients $\rho_1=0$ and $\lambda_1=-1$ in the third equation of \eqref{PHLC}, the system reduces to the parabolic type equations, which are also called Ericksen-Leslie's system in the literatures.  The static analogue of the parabolic Ericksen-Leslie's system is the so-called Oseen-Frank model, whose mathematical study was initialed from Hardt-Kinderlehrer-Lin \cite{Hardt-Kinderlehrer-Lin-CMP1986}. Since then there have been many works in this direction. In particular, the existence and regularity or partial regularity of the approximation (usually Ginzburg-Landau approximation as in \cite{Lin-Liu-CPAM1995}) dynamical Ericksen-Leslie's system was started by the work of Lin and Liu in \cite{Lin-Liu-CPAM1995}, \cite{Lin-Liu-DCDS1996} and \cite{Lin-Liu-ARMA2000}.

For the simplest system preserving the basic energy law
\begin{align}\label{simplified-model}
  \left\{ \begin{array}{c}
    \partial_t \v + \v \cdot \nabla \v + \nabla p =  \Delta \v - \div (\nabla \dr \odot \nabla \dr )\, , \\
    \div \v =0\, ,\\
    \partial_t \dr + \v\cdot \nabla \dr = \Delta \dr + |\nabla \dr|^2 \dr\,,\quad |\dr|=1\,,
  \end{array}\right.
\end{align}
which can be obtained by neglecting the Leslie stress and specifying some elastic constants. In 2-D case, global weak solutions with at most a finite number of singular times was proved by Lin-Lin-Wang \cite{Lin-Lin-Wang-ARMA2010}. The uniqueness of weak solutions was later on justified by Lin-Wang \cite{Lin-Wang-CAMS2010} and Xu-Zhang \cite{Xu-Zhang-JDE2012}. Recently, Lin and Wang proved global existence of weak solution for 3-D case in \cite{Lin-Wang-CPAM2016}.

For the more general parabolic Ericksen-Leslie's system,  local well-posedness is proved by Wang-Zhang-Zhang in \cite{Wang-Zhang-Zhang-ARMA2013}, and in \cite{Huang-Lin-Wang-CMP2014} regularity and existence of global solutions in $\mathbb{R}^2$ was established by Huang-Lin-Wang. The existence and uniqueness of weak solutions, also in $\mathbb{R}^2$ was proved by Hong-Xin and Li-Titi-Xin in \cite{Hong-Xin-2012} \cite{Li-Titi-Xin} respectively. Similar result was also obtained by Wang-Wang in \cite{Wang-Wang-2014}. For more complete review of the works for the parabolic Ericksen-Leslie's system, please see the reference listed above.

\subsection{$\rho_1 >0$, hyperbolic model}
If $\rho_1>0$,  \eqref{PHLC} is an incompressible Navier-Stokes equations coupled with a wave map type system for which there is very few works, comparing the corresponding parabolic model, which is Navier-Stokes coupled with a heat flow. The only notable exception might be for the most simplified model, say, in \eqref{NS-WM}, taking $\v=0$, and the spacial dimension is $1$. For this case, the system \eqref{NS-WM} can be reduced to a so-called nonlinear variational wave equation. Zhang and Zheng (later on with Bressan and others) studied systematically the dissipative and energy conservative solutions in series work starting from late 90's \cite{Zhang-Zheng-AA1998, Zhang-Zheng-ActaC1999, Zhang-Zheng-ARMA2000, Zhang-Zheng-CAMS2001, Zhang-Zheng-CPDE2001, Zhang-Zheng-PRSE2002, Zhang-Zheng-ARMA2003, Zhang-Zheng-AIPA2005, Bressan-Zhang-Zheng-ARMA2007, Zhang-Zheng-ARMA2010, Zhang-Zheng-CPAM2012, Chen-Zhang-Zheng-ARMA2013}.

For the multidimensional case, to our best acknowledgement, there was no mathematical work on the original hyperbolic Ericksen-Leslie's system \eqref{PHLC}. Very recently, De Anna and Zarnescu \cite{DeAnna-Zarnescu-2016} considered the inertial Qian-Sheng model of liquid crystals which couples a hyperbolic type equation involving a second order derivative with a forced incompressible Navier-Stokes equations. It is a system describing the hydrodynamics of nematic liquid crystals in the Q-tensor framework. They proved global well-posedness and twist-wave solutions. Furthermore, for the inviscid version of the Qian-Sheng model, in \cite{FRSZ-2016}, Feireisl-Rocca-Schimperna-Zarnescu proved a global existence of the {\em dissipative solution} which is inspired from that of incompressible Euler equation defined by P-L. Lions \cite{Lions-1996}.

It is well-known that the geometric constraint $|\dr|=1$ brings difficulties (particularly in higher order nonlinearities) on the Ericksen-Leslie's system \eqref{PHLC}, even in the parabolic case $\rho_1=0$. In \cite{Wang-Zhang-Zhang-ARMA2013}, Wang-Zhang-Zhang treated the parabolic case to remove this constraint by introducing a new formulation of parabolic version of \eqref{PHLC}. The key feature of their formulation is projecting the equation of $\dr$ (the third equation of \eqref{PHLC} with $\rho_1=0$) the space orthogonal to the direction $\mathrm{d}$. However, for the genuine hyperbolic system \eqref{PHLC} with $\rho_1 > 0$, this technique seems not work.  Indeed, projecting the third equation of the system \eqref{PHLC} into the {\em orthogonal} direction of $\dr$, we have:
\begin{equation}\label{Othogonal-Direction-Model}
  \begin{aligned}
    \left\{ \begin{array}{c}
      \partial_t \v + \v \cdot \nabla \v - \frac{1}{2} \mu_4 \Delta \v + \nabla p = - \div (\nabla \dr \odot \nabla \dr) + \div \tilde{\sigma}\, , \\
      \div \v = 0\, ,\\
     \rho_1 (\mathrm{I} - \dr \dr) \cdot \ddot{\dr} =  \lambda_1 (\dot{ \dr} + \B \dr) + (\mathrm{I} - \dr \dr) \cdot ( \Delta \dr  + \lambda_2 \A \dr)\, ,
    \end{array}\right.
  \end{aligned}
\end{equation}
where $\mathrm{I}$ denotes the $n \times n$ identical matrix. If taking $\rho_1=0$ and $\lambda_1=-1$, \eqref{Othogonal-Direction-Model} is exactly what was employed in \cite{Wang-Zhang-Zhang-ARMA2013}. But for the case $\rho_1 > 0$, the third equation in \eqref{Othogonal-Direction-Model} include only $(\mathrm{I} - \dr \dr) \cdot \ddot{\dr}$, while the {\em parallel} part of the second derivative term, i.e. $\dr\dr\cdot\ddot{\dr}$ is not included so it will have trouble on the energy estimate. In \cite{Wang-Zhang-Zhang-ARMA2013}, this is not a problem since $\rho_1=0$, the leading order term $\dot{ \dr} + \B \dr$ is automatically orthogonal to $\mathrm{d}$.

Our approach is projecting the third equation of \eqref{PHLC} to the direction parallel to $\mathrm{d}$:
$$ \rho_1 \dr \dr \cdot \ddot{\dr} = \gamma \dr + \dr \dr \cdot ( \Delta \dr  + \lambda_2 \A \dr)\,, $$
from which we can determine the Lagrangian multiplier $\gamma = - \rho_1 |\dot{\dr}|^2 + |\nabla \dr|^2 - \lambda_2 \dr^\top \A \dr$. Now, the key point is: with $\gamma$ given in this form,  if the initial data $\dr^{in},\ {\tilde\dr}^{in}$ satisfy $|\dr^{in}|=1$ and the compatibility condition $ {\tilde\dr}^{in} \cdot \dr^{in} = 0$, then for the {\em solution} to \eqref{PHLC}, the constraint $|\dr|=1$ will be {\em forced} to hold. Hence, these constraints need only be given on the initial data, while in the system \eqref{PHLC}, we do not need the constraint $|\dr| =1$ explicitly any more. These facts will be stated and proved in Section 2.

We remark that spiritually this is similar to \cite{Liu-Liu-Pego-CPAM2007} in which they used an unconstrained formulation of the Navier-Stokes equations, namely, the incompressibility is not required in the equations, but only imposed on the initial data.

\subsection{Main results.}

The fundamental role played is the $L^2$-estimate for \eqref{PHLC}, which we call the basic energy law. To ensure the basic energy law is dissipative, the Leslie coefficients should satisfy some constraints, see Section 2. Under these coefficients constraints, we prove the local well-posedness for the Ericksen-Leslie's hyperbolic liquid crystal system \eqref{PHLC}-\eqref{Inital-Data}. With an additional condition $\lambda_1 < 0$ which will provide extra damping, the global existence with small initial data can be established.  More precisely, the main results are stated in the following theorem:

\begin{theorem}\label{Main-Thm}
 Let the integer $s > \frac{n}{2}+1 $, and let the the initial data satisfy $\v^{in},\ {\tilde\dr}^{in} \in H^s(\R^n),\ \nabla \dr^{in} \in H^{s}(\R^n)$, $|\dr^{in}| = 1$, $ {\tilde\dr}^{in} \cdot \dr^{in} = 0$. The initial energy is defined as $E^{in} \equiv |\v^{in}|^2_{H^s} + \rho_1 |{\tilde\dr}^{in}|^2_{H^s} + |\nabla \dr^{in}|^2_{H^s}$. If the Leslie coefficients satisfy the relations that either
 \begin{equation}\label{Coeffs-3}
    \mu_1 \geq 0\,, \mu_4 > 0\,, \lambda_1 < 0\,, \mu_5 + \mu_6 + \tfrac{\lambda_2^2}{\lambda_1} \geq 0\,,
  \end{equation}
  or
  \begin{equation}\label{Coeffs-4}
    \mu_1 \geq 0\,, \mu_4 > 0\,, \lambda_1 = 0\,, (1 - \delta ) \mu_4 ( \mu_5 + \mu_6 ) \geq 2 | \lambda_2 |^2
  \end{equation}
  for some $ \delta \in (0,1)$, then the following statements hold:

 (I). If the initial energy $E^{in} < \infty$, then there exists $T > 0$, depending only on $E^{in}$ and Leslie coefficients, such that the system \eqref{PHLC}-\eqref{Inital-Data} admits a unique solution $\v \in L^\infty(0,T;H^s(\R^n)) \cap L^2(0,T;H^{s+1}(\R^n))$, $\nabla \dr \in L^\infty(0,T;H^{s}(\R^n))$ and $\dot{\dr} \in L^\infty(0,T;H^s(\R^n))$. Moreover, the solution $(\v, \dr)$ satisfies
  $$ \sup_{0 \leq t \leq T}\left(  |\v|^2_{H^s} + \rho_1 |\dot{\dr}|^2_{H^2} + |\nabla \dr|^2_{H^s} \right)(t) + \tfrac{1}{2} \mu_4 \int_0^T |\nabla \v|^2_{H^s} (\tau) \d \tau \leq C_0 \,,$$
  where the positive constant $C_0$ depends only on $E^{in}$, Leslie coefficients and $T$.

   (II). If in addition, $\lambda_1 < 0$, henceforth the coefficients constraints \eqref{Coeffs-3} hold, then there is a constant $\eps_0 > 0$, depending only on Leslie coefficients, such that if the initial data satisfy $E^{in}  \leq \eps_0$, then the system \eqref{PHLC}-\eqref{Inital-Data} has a unique global solution $\v \in L^\infty(0,\infty;H^s(\R^n)) \cap L^2(0,\infty;H^{s+1}(\R^n))$, $\nabla \dr \in L^\infty(0,\infty;H^{s}(\R^n))$ and $\dot{\dr} \in L^\infty(0,\infty;H^s(\R^n))$. Moreover, the solution $(\v, \dr)$ satisfies
  \begin{align}
  \no \sup_{ t \geq 0} \left( |\v|^2_{H^s} + \rho_1 |\dot{\dr}|^2_{H^s} + |\nabla \dr|^2_{H^s} \right) + \tfrac{1}{2} \mu_4 \int_0^\infty |\nabla \v|^2_{H^s} \d t
 \leq C_1 E^{in} \,,
 \end{align}
 where the constant $C_1 > 0$ depends upon the Leslie coefficients and inertia constant $\rho_1$.
\end{theorem}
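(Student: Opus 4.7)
The plan is to treat \eqref{PHLC} as a quasilinear coupled parabolic-hyperbolic system with primary unknowns $(\v, \dr, \dot{\dr})$ and to derive a dissipative $H^s$ energy inequality for
$$ E(t) := \abss{\v}_{H^s}^2 + \rho_1 \abss{\dot{\dr}}_{H^s}^2 + \abss{\nabla \dr}_{H^s}^2\,. $$
The basic $L^2$ law, which follows in Section 2 from the constraints \eqref{Coeffs-3} or \eqref{Coeffs-4}, controls $\tfrac{\mu_4}{2}\abss{\nabla \v}_{L^2}^2$ plus a nonnegative Leslie contribution, with an extra $|\lambda_1|\abss{\dot{\dr}}_{L^2}^2$ when $\lambda_1 < 0$. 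The goal is to lift this dissipation to the $H^s$ level and then use it both to close a local iteration and to propagate smallness.

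For the a priori bound, apply $\partial^\alpha$ with $|\alpha|\le s$ to each equation, pair the momentum equation with $\partial^\alpha \v$ and the director equation with $\partial^\alpha \dot{\dr}$. The principal contributions $\rho_1 \partial^\alpha \ddot{\dr}\cdot \partial^\alpha \dot{\dr}$ and $-\Delta \partial^\alpha \dr \cdot \partial^\alpha \dot{\dr}$ reassemble into $\tfrac{d}{dt}\bigl[\tfrac{\rho_1}{2}\abss{\dot{\dr}}_{H^s}^2 + \tfrac{1}{2}\abss{\nabla \dr}_{H^s}^2\bigr]$ modulo commutators controlled by standard Moser/Kato-Ponce estimates, available since $s>\frac{n}{2}+1$. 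The cross terms coming from $\div(\nabla \dr \odot \nabla \dr)$ against $\partial^\alpha \v$ and from $\div \tilde{\sigma}$ paired with $\partial^\alpha \v$ and $\partial^\alpha \dot{\dr}$ have to be arranged so that, at the $H^s$ level, the same algebraic cancellations as in the basic energy law force the bilinear remainder in $(\nabla \v,\dot{\dr})$ to be nonnegative under \eqref{Coeffs-3} or \eqref{Coeffs-4}. The cubic/quintic nonlinearity $\gamma\dr$ (with $\gamma$ given by \eqref{Lagrange-Multiplier}) is controlled in $H^s$ via product estimates, using the propagated constraint $|\dr|=1$ from Section 2 to keep factors of $\dr$ in $L^\infty$. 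The outcome is
$$ \tfrac{d}{dt} E + c_0 \abss{\nabla \v}_{H^s}^2 + c_1 \mathcal{D}_\dr \le C\,\Phi(E)\bigl(1 + \abss{\nabla \v}_{H^s}^2\bigr), $$
where $\mathcal{D}_\dr$ is the director damping (zero if $\lambda_1=0$, of order $\abss{\dot{\dr}}_{H^s}^2$ if $\lambda_1<0$) and $\Phi$ is a polynomial. For part (I) this already yields local-in-time control $E(t)\le C_0$ on an interval $[0,T]$ depending only on $E^{in}$ and the coefficients.

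Existence is then obtained by a Picard-type iteration: given $(\v^k,\dr^k,\dot{\dr}^k)$, solve a linear non-stationary Stokes problem for $\v^{k+1}$ and a linear wave-type equation with $(\v^k,\dr^k)$-dependent coefficients for $\dr^{k+1}$, defining $\dot{\dr}^{k+1}$ through the material derivative. Each linear problem is well-posed in $H^s$; the same a priori scheme, applied linearly, gives uniform bounds on a common short interval $[0,T_\ast]$. Convergence is obtained by showing that the differences form a Cauchy sequence in a lower-regularity space (e.g.\ $L^\infty_t L^2_x$ for $\v$ and $\dot{\dr}$, $L^\infty_t H^1_x$ for $\dr$), then interpolating against the uniform $H^s$ bound. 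Uniqueness follows from the same difference estimate applied to two candidate solutions, using the automatically propagated compatibility $|\dr|=1$, $\dr\cdot\dot{\dr}=0$ established in Section 2. For part (II), the damping $\mathcal{D}_\dr\ge |\lambda_1|\abss{\dot{\dr}}_{H^s}^2$ allows one to extract a factor $\sqrt{E}$ from the nonlinearities via $L^\infty$-Sobolev embedding, sharpening the inequality to
$$ \tfrac{d}{dt} E + c\bigl(\abss{\nabla \v}_{H^s}^2 + \abss{\dot{\dr}}_{H^s}^2\bigr) \le C\sqrt{E}\bigl(\abss{\nabla \v}_{H^s}^2 + \abss{\dot{\dr}}_{H^s}^2\bigr) + C E^{3/2}, $$
so that choosing $\varepsilon_0$ with $C\sqrt{\varepsilon_0}\le c/2$ yields $E(t)\le 2E^{in}$ globally by a standard continuation.

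The principal obstacle is the quasilinear coupling in the director equation. Expanding $\ddot{\dr} = \partial_t^2 \dr + 2\v\cdot\nabla \partial_t \dr + \v_i\v_j\partial_i\partial_j \dr + (\partial_t\v + \v\cdot\nabla\v)\cdot\nabla\dr$ produces pieces that are formally of the same order as the principal part, and avoiding a loss of derivatives in $H^s$ requires careful exploitation of the transport-by-$\v$ structure already present on the Navier-Stokes side. Simultaneously, the $\lambda_2 \A\dr$ coupling generates a cross term between $\nabla \v$ and $\dot{\dr}$ of the same size as the dissipation, so the algebraic conditions in \eqref{Coeffs-3} and \eqref{Coeffs-4} (through $\mu_5+\mu_6+\lambda_2^2/\lambda_1 \ge 0$ or $(1-\delta)\mu_4(\mu_5+\mu_6)\ge 2\lambda_2^2$) are precisely what is needed to make the full bilinear form nonnegative after pairing. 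Handling these coupled algebraic and analytic constraints at the $H^s$ level, rather than only at $L^2$, is where the bulk of the technical work concentrates.
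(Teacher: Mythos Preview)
Your outline for Part (I) is essentially the paper's approach: lift the basic energy law to $H^s$ so that the top-order Leslie terms reassemble into the same nonnegative form, control commutators by product estimates, and close a local bound for $E(t)$; existence then follows from an iteration with uniform bounds and compactness. One point to watch: if you genuinely linearize the director equation at step $k+1$ using $(\v^k,\dr^k)$ as you write, the constraint $|\dr^{k+1}|=1$ is \emph{not} preserved and you lose the $L^\infty$ control on $\dr^{k+1}$ that your commutator estimates use. The paper keeps the $\dr^{k+1}$-equation nonlinear (with $\gamma(\v^k,\dr^{k+1},\dot\dr^{k+1})\dr^{k+1}$) and proves a separate local well-posedness for that wave-map-type problem with given velocity; this is exactly what guarantees $|\dr^{k+1}|=1$ at every step.

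For Part (II) there is a real gap. The damping you invoke, $\mathcal{D}_\dr\ge|\lambda_1|\,\|\dot\dr\|_{H^s}^2$, together with $\|\nabla\v\|_{H^s}^2$, is not enough dissipation to absorb all nonlinear terms. The term $|\nabla\dr|^2\dr$ inside $\gamma\dr$ produces, at $H^s$ level, a contribution of size $\|\nabla\dr\|_{H^s}^{2}\,\|\nabla\dr\|_{\dot H^s}\,\|\dot\dr\|_{H^s}$; since $\|\nabla\dr\|_{\dot H^s}$ is \emph{not} in your dissipation, after Young's inequality you are left with a pure power $C E^{p}$ ($p\ge 2$) on the right. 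Your own inequality already displays a residual $C E^{3/2}$, and the continuation argument you state fails in its presence: from $\tfrac{d}{dt}E\le C E^{3/2}$ one gets finite-time blow-up no matter how small $E^{in}$ is, so ``$C\sqrt{\varepsilon_0}\le c/2$ yields $E(t)\le 2E^{in}$ globally'' is false as written.

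The paper closes this by manufacturing dissipation for $\nabla\dr$. One tests the director equation at order $k$ against $\nabla^k\dr$ (not $\nabla^k\dot\dr$), which yields $\tfrac12\|\nabla^{k+1}\dr\|_{L^2}^2$ as a damping term at the price of a cross energy $\rho_1\|\nabla^k(\dot\dr+\dr)\|_{L^2}^2$. Adding an $\eta$-multiple of this identity to the $H^s$ estimate, for $\eta$ small depending only on the coefficients, produces a modified pair $(\mathcal{E}_\eta,\mathcal{D}_\eta)$ with $\mathcal{D}_\eta\gtrsim\|\nabla\v\|_{H^s}^2+\|\dot\dr\|_{H^s}^2+\|\nabla\dr\|_{\dot H^s}^2$ and an inequality of the form $\tfrac12\tfrac{d}{dt}\mathcal{E}_\eta+\mathcal{D}_\eta\le C\sum_{p=1}^4\mathcal{E}_\eta^{p/2}\mathcal{D}_\eta$, with \emph{no} pure $\mathcal{E}^p$ remainder. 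That is the missing ingredient your argument needs for global smallness propagation.
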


\begin{remark}
  The simplified case $\mu_1 = \mu_2 = \mu_3 = \mu_5 = \mu_6 = 0$, which corresponds to the Hyperbolic Ericksen-Leslie model \eqref{NS-WM}, satisfies the Leslie's coefficients relations \eqref{Coeffs-4}. So, the local well-posedness of the simplified system \eqref{NS-WM} holds.
\end{remark}

We now sketch the main ideas of the proof of the above theorem. One of the key difficulties is that the geometric constraint $\dr=1$ is hard to be kept in the construction of the approximate system. In stead, this constraint should be put on the initial data, then it will be forced to be satisfied during the evolution if the solutions are smooth. This is  the similar treatment in Ericksen-Leslie's parabolic liquid crystal system. However, for the parabolic case, this property is kept simply by the maximal principle. For example, see \cite{Lin-Lin-Wang-ARMA2010}. Inspired by idea from the standard wave map, we consider $h(x,t)=|\dr(x,t)|^2-1$ which satisfies a nonlinear wave equation with a given velocity field $\v$. Then we can use a Gronwall type inequality to show if initially $h(0,x)=0$, then it will vanish provided that $(\v, \dr)$ are smooth and $\dr$ is bounded.

Our next step is that for a given smooth velocity field $\v(x,t)$, the equation of $\dr(x,t)$ can be solved locally in time. We use the mollifier method to construct the approximate solutions {\em without} assuming the constraint $|\dr|=1$. The key point is that we can derive the uniform energy estimate also without this constraint. Then as mentioned above, the condition $|\dr|=1$ will be automatically obeyed in the time interval that the solution is smooth.

Now we carefully design an iteration scheme to construct approximate solutions $(\v^k, \dr^k)$: solve the Stokes type equation of $\v^{k+1}$ in terms of $\v^k$ and $\dr^k$, and solve the wave map type equation of $\dr^{k+1}$ in terms of $\v^{k}$. Note that the key is that in this construction, the geometric constraint of $\dr^k$ will not be a trouble. So we can derive the uniform energy estimate by assuming $|\dr|=1$ which will significantly simply the process. We emphasize that in this step the basic energy law play a fundamental role: the main term of the higher order energy are kept in the form of the basic $L^2$-estimate. Thus the existence of the local-in-time smooth solutions can be proved.

To prove the global-in-time smooth solutions, the dissipation in the a priori estimate in the last step is not enough. We assume furthermore the coefficient $\lambda_1$ is strictly negative which will give us an additional dissipation, based on a new a priori estimate by carefully designing the energy and energy dissipation. Then we can show the existence of the global smooth solutions with the small initial data.

The organization of this paper is as follows: in the next section, we prove the basic energy law, and give the conditions on the Leslie coefficients so that the energy is dissipative. In Section \ref{Sec-APE}, we provide an a priori estimate of the system \eqref{PHLC}. In Section \ref{Sec-LagMult-Geom}, we justify the relation between the Lagrangian multiplier $\gamma$ and $|\dr|=1$, which guarantees that the condition of unit length of $\dr$ needs only be put on the initial dat. In Section \ref{Sec-LWP-u-Given}, we show the local existence of $\dr$ for a given velocity field $\v$, which will be employed in constructing the iterating approximate system of \eqref{PHLC}-\eqref{Inital-Data} in Section \ref{Sec-IAS}. In Section \ref{Sec-IAS}, we construct the approximate equation of \eqref{PHLC} by iteration. In Section \ref{Sec-LWP}, we prove the local well-posedness of \eqref{PHLC} with large initial data by obtaining  uniform energy bounds of the iterating approximate system \eqref{Iter-Appr-Syt}. In Section \ref{Sec-Global}, we globally extend the solution of \eqref{PHLC}-\eqref{Inital-Data} constructed in Section \ref{Sec-LWP} under the small initial energy condition when the {\em additional} coefficients constraint $\lambda_1 < 0$ is imposed.

\section{Basic energy law}\label{Sec-BEL}

In this section, we formally derive the basic energy law. Throughout this section, the smoothness of the solution $(\v, \dr)$ to \eqref{PHLC}-\eqref{Inital-Data} is assumed.

\begin{proposition}\label{Prop-Basic-Ener-Law}
  If $(\v, \dr)$ is a smooth solution to the system \eqref{PHLC} with the initial conditions \eqref{Inital-Data}, then the flow satisfies the following relation:
  \begin{equation}\label{Basic-Ener-Law}
    \begin{aligned}
      & \tfrac{1}{2} \tfrac{\d}{\d t} \big{(} |\v|^2_{L^2} + \rho_1 |\dot{\dr}|^2_{L^2} + |\nabla \dr|^2_{L^2} \big{)} + \tfrac{1}{2} \mu_4 |\nabla \v|^2_{L^2} + \mu_1 |\dr^\top \A \dr|^2_{L^2} \\
      & - \lambda_1 |\dot{\dr} + \B \dr|^2_{L^2} - 2 \lambda_2 \l \dot{\dr} + \B \dr , \A \dr \r + (\mu_5 + \mu_6) |\A \dr|^2_{L^2} = 0 \,.
    \end{aligned}
  \end{equation}
  Moreover, the basic energy law \eqref{Basic-Ener-Law} is dissipated if the Leslie coefficients satisfy that either the constraints \eqref{Coeffs-3}, i.e.
  \begin{equation*}
     \mu_1 \geq 0\,, \mu_4 > 0\,, \lambda_1 < 0\,, \mu_5 + \mu_6 + \tfrac{\lambda_2^2}{\lambda_1} \geq 0\,,
  \end{equation*}
  or the constraints \eqref{Coeffs-4}, i.e.
  \begin{equation*}
    \mu_1 \geq 0\,, \mu_4 > 0\,, \lambda_1 = 0\,, (1 - \delta ) \mu_4 ( \mu_5 + \mu_6 ) \geq 2 | \lambda_2 |^2
  \end{equation*}
  for some $ \delta \in (0,1)$.
\end{proposition}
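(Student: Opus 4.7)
The plan is to establish \eqref{Basic-Ener-Law} as a pointwise-in-time identity by testing each evolution equation against its natural test function and carefully combining, then to verify case by case that the Leslie conditions force the resulting dissipation to have the right sign.

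First, I multiply the momentum equation of \eqref{PHLC} by $\v$ and integrate over $\R^n$. Since $\div\v=0$, the convective term and the pressure drop out, the viscous term yields $\tfrac{1}{2}\mu_4|\nabla\v|_{L^2}^2$ after one integration by parts, and the two stress terms contribute $\int(\nabla\dr\odot\nabla\dr):\nabla\v\,\d x$ and $-\int\tilde\sigma:\nabla\v\,\d x$. Testing the third equation of \eqref{PHLC} against $\dot\dr$, the inertial term gives $\tfrac{\rho_1}{2}\tfrac{\d}{\d t}|\dot\dr|_{L^2}^2$ (again by $\div\v=0$), the Lagrange-multiplier term $\int\gamma\dr\cdot\dot\dr\,\d x$ vanishes because the constraint $|\dr|=1$ forces $\dr\cdot\dot\dr=0$ pointwise, and
$$\int\Delta\dr\cdot\dot\dr\,\d x = -\tfrac{1}{2}\tfrac{\d}{\d t}|\nabla\dr|_{L^2}^2 - \int(\nabla\dr\odot\nabla\dr):\nabla\v\,\d x,$$
the extra term arising when one commutes $\nabla$ past the material derivative and uses $\div\v=0$. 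Summing the two tested equations cancels the $(\nabla\dr\odot\nabla\dr):\nabla\v$ contributions and leaves a preliminary identity whose remaining interaction terms are $\int\tilde\sigma:\nabla\v\,\d x$, $\lambda_1\l\N,\dot\dr\r$, and $\lambda_2\l\A\dr,\dot\dr\r$.

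The central algebraic step is to reduce these quantities to the symmetric form appearing in \eqref{Basic-Ener-Law}. Writing $\nabla\v=\A+\B$ and exploiting symmetry of $\A$ versus antisymmetry of $\B$, the $\mu_1$ term produces $\mu_1|\dr^\top\A\dr|^2$; Parodi's relations $\mu_2+\mu_3=-\lambda_2$, $\mu_3-\mu_2=-\lambda_1$, and $\mu_6-\mu_5=-\lambda_2$ from \eqref{Coefficients-Relations} collapse the $(\mu_2,\mu_3)$ contributions to $-\lambda_2\l\N,\A\dr\r-\lambda_1\l\N,\B\dr\r$ and the $(\mu_5,\mu_6)$ contributions to $(\mu_5+\mu_6)|\A\dr|_{L^2}^2-\lambda_2\l\A\dr,\B\dr\r$. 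Substituting $\dot\dr=\N-\B\dr$ inside $\lambda_1\l\N,\dot\dr\r+\lambda_2\l\A\dr,\dot\dr\r$ generates $\lambda_1|\N|_{L^2}^2-\lambda_1\l\N,\B\dr\r+\lambda_2\l\N,\A\dr\r-\lambda_2\l\A\dr,\B\dr\r$. The $\l\N,\B\dr\r$ and $\l\A\dr,\B\dr\r$ cross terms then cancel exactly between the stress and director sources, and one recovers \eqref{Basic-Ener-Law}. This cancellation is the main obstacle of the proof and depends crucially on Parodi's relation; without it, the mixed $\B$-terms would survive and obstruct the quadratic form interpretation.

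For dissipativity, I regard the last line of \eqref{Basic-Ener-Law} as a quadratic form $Q(\N,\A\dr):=-\lambda_1|\N|_{L^2}^2-2\lambda_2\l\N,\A\dr\r+(\mu_5+\mu_6)|\A\dr|_{L^2}^2$. Under \eqref{Coeffs-3}, with $\lambda_1<0$, completing the square in $\N$ rewrites $Q$ as $-\lambda_1\bigl|\N-\tfrac{\lambda_2}{\lambda_1}\A\dr\bigr|_{L^2}^2+\bigl(\mu_5+\mu_6+\tfrac{\lambda_2^2}{\lambda_1}\bigr)|\A\dr|_{L^2}^2$, and both coefficients are nonnegative by hypothesis. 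Under \eqref{Coeffs-4}, with $\lambda_1=0$, I apply Young's inequality $|2\lambda_2\l\N,\A\dr\r|\le\eta|\N|_{L^2}^2+\tfrac{\lambda_2^2}{\eta}|\A\dr|_{L^2}^2$ together with the pointwise bounds $|\A\dr|_{L^2}^2\le\tfrac{1}{2}|\nabla\v|_{L^2}^2$ and $|\N|_{L^2}^2\le 2|\dot\dr|_{L^2}^2+|\nabla\v|_{L^2}^2$ (both consequences of $|\dr|=1$); the condition $(1-\delta)\mu_4(\mu_5+\mu_6)\ge 2|\lambda_2|^2$ permits a choice of $\eta\in[\lambda_2^2/(\mu_5+\mu_6),\,\mu_4/2]$ that absorbs all $|\nabla\v|_{L^2}^2$ and $|\A\dr|_{L^2}^2$ contributions into the dissipation, leaving at most a $C|\dot\dr|_{L^2}^2\le CE/\rho_1$ term which enters a Gr\"onwall-type control of the energy and yields the claimed dissipative inequality.
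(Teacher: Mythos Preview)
Your derivation of the identity \eqref{Basic-Ener-Law} follows essentially the same route as the paper: test the momentum equation with $\v$, the director equation with $\dot\dr$, exploit $\dr\cdot\dot\dr=0$ and $\div\v=0$ to cancel the Ericksen stress against the transported Laplacian, and then reduce the Leslie stress contribution via Parodi's relations and the $\A/\B$ symmetry split. (One minor sign slip: in the completion of the square under \eqref{Coeffs-3} the correct expression is $-\lambda_1\bigl|\N+\tfrac{\lambda_2}{\lambda_1}\A\dr\bigr|_{L^2}^2$, not with a minus sign inside; compare the paper's dissipation functional $D(t)$ in Section~\ref{Sec-APE}.)

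The genuine gap is in your treatment of the \eqref{Coeffs-4} case. Your Young inequality produces an $\eta|\N|_{L^2}^2$ term, which you then bound by $2\eta|\dot\dr|_{L^2}^2+\eta|\nabla\v|_{L^2}^2$ and propose to absorb the $|\dot\dr|_{L^2}^2$ residue via Gr\"onwall. But a Gr\"onwall estimate is \emph{not} a dissipation statement: it yields only $E(t)\le E(0)e^{Ct}$, not $E(t)\le E(0)$. So this does not show that the energy law is dissipated. Indeed, with $\lambda_1=0$ the quadratic form $\tfrac12\mu_4|\nabla\v|_{L^2}^2-2\lambda_2\langle\N,\A\dr\rangle+(\mu_5+\mu_6)|\A\dr|_{L^2}^2$ is linear in $\N$ and therefore cannot be nonnegative for all $\N$. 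The paper's own proof of Proposition~\ref{Prop-Basic-Ener-Law} in fact derives only the identity \eqref{Basic-Ener-Law} and does not justify the dissipativity assertion under \eqref{Coeffs-4}; the role of \eqref{Coeffs-4} surfaces later in Lemma~\ref{Lm-APE}, where the relevant terms are rearranged as in \eqref{APE-5*} by borrowing $\tfrac12(1-\delta)\mu_4|\nabla\v|^2$ from the viscous dissipation and completing a square in $|\nabla\v|$ and $|\A\dr|$ (not in $\N$). Even there the outcome is an inequality that closes the a~priori estimate, rather than a pointwise sign condition on the basic energy law.
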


\begin{proof}

  Taking $L^2$-inner product with $\v$ in the first equation of \eqref{PHLC}, we have
  \begin{equation*}
    \tfrac{1}{2} \tfrac{\d}{\d t} |\v|^2_{L^2} + \tfrac{1}{2} \mu_4 |\nabla \v|^2_{L^2} = \l - \div (\nabla \dr \odot \nabla \dr), \v \r + \l \partial_j \tilde{\sigma}_{ji} , \v_i \r \,.
  \end{equation*}
  Taking $L^2$-inner product with $\dot{\dr}$ in the third equation of \eqref{PHLC}, we obtain
  \begin{equation*}
    \tfrac{1}{2} \tfrac{\d}{\d t} \big{(} \rho_1 |\dot{\dr}|^2_{L^2} + |\nabla \dr|^2_{L^2} \big{)} - \lambda_1 |\dot{\dr}|^2_{L^2} = \l \Delta \dr, \v \cdot \nabla \dr \r + \lambda_1 \l \B_{ki} \dr_k, \dot{\dr}_i \r + \lambda_2 \l \A_{ki} \dr_k, \dot{\dr}_i \r \,.
  \end{equation*}
  Here we make use of the fact $\dot{\dr} \cdot \dr = 0$. Noticing that $ \l - \div (\nabla \dr \odot \nabla \dr), \v \r + \l \Delta \dr, \v \cdot \nabla \dr \r = 0 $, we know
  \begin{equation}\label{BEL-1}
    \begin{aligned}
      & \tfrac{1}{2} \tfrac{\d}{\d t} \big{(} |\v|^2_{L^2} + \rho_1 |\dot{\dr}|^2_{L^2} + |\nabla \dr|^2_{L^2} \big{)} + \tfrac{1}{2} \mu_4 |\nabla \v|^2_{L^2} - \lambda_1 |\dot{\dr}|^2_{L^2} \\
      & = \l \partial_j \tilde{\sigma}_{ji} , \v_i \r + \lambda_1 \l \B \dr, \dot{\dr} \r + \lambda_2 \l \A \dr, \dot{\dr} \r \,.
    \end{aligned}
  \end{equation}

  Now we compute the terms in the righthand side of the above equality \eqref{BEL-1}.

  First, we notice that
  \begin{equation}\label{BEL-2}
    \begin{aligned}
      & \l \partial_j ( \mu_1 \dr_k \dr_p \A_{kp} \dr_i \dr_j  ), \v_i \r =  - \mu_1 \l \dr_k \dr_p \A_{kp} \dr_i \dr_j , \partial_j \v_i \r \\
      & = - \mu_1 \l \dr_k \dr_p \A_{kp} \dr_i \dr_j , \A_{ij} + \B_{ij} \r = - \mu_1 |\dr^\top \A \dr|^2_{L^2}\,.
    \end{aligned}
  \end{equation}
  Here we make use of the relation $\B_{ij} = - \B_{ji}$.

  Second, by using the coefficients relation \eqref{Coefficients-Relations}, the skew symmetry of the tensor $\B$ and the symmetry of the tensor $\A$, we derive
  \begin{equation}\label{BEL-3}
    \begin{aligned}
      & \l \partial_j ( \mu_2 \dr_j \B_{ki} \dr_k +　\mu_3 \dr_i \B_{kj} \dr_k ), \v_i \r =  - \l  \mu_2 \dr_j \B_{ki} \dr_k +　\mu_3 \dr_i \B_{kj} \dr_k , \partial_j  \v_i \r \\
      = & - \l (\mu_2 - \mu_3) \dr_j \B_{ki} \dr_k + \mu_3 ( \dr_j \B_{ki} \dr_k + \dr_i \B_{kj} \dr_k ), \A_{ij} + \B_{ij} \r \\
      = & - \lambda_1 \l \dr_j \B_{ki} \dr_k, \A_{ij} + \B_{ij} \r - \mu_3 \l \dr_j \B_{ki} \dr_k + \dr_i \B_{kj} \dr_k, \A_{ij} \r \\
      = & \lambda_1 |\B \dr|^2_{L^2} - ( \lambda_1 + 2 \mu_3 ) \l B_{ki} \dr_k , \A_{ij} \dr_j \r \\
      = & \lambda_1 |\B \dr|^2_{L^2} + \lambda_2 \l \B \dr , \A \dr \r \,.
    \end{aligned}
  \end{equation}

  Third, by similar calculation in the relation \eqref{BEL-3}, we imply
  \begin{equation}\label{BEL-4}
    \begin{aligned}
       & \l \partial_j ( \mu_5 \dr_j \dr_k \A_{ki} + \mu_6 \dr_i \dr_k \A_{kj} ) , \v_i \r = - \l \mu_5 \dr_j \dr_k \A_{ki} + \mu_6 \dr_i \dr_k \A_{kj} , \partial_j \v_i \r \\
      =& - \l (\mu_5 - \mu_6) \dr_j \dr_k \A_{ki} + \mu_6 ( \dr_j \dr_k \A_{ki} + \dr_i \dr_k \A_{kj} ), \A_{ij} + \B_{ij} \r \\
      =& -\lambda_2 \l \dr_j \dr_k \A_{ki} , \A_{ij} + \B_{ij} \r - \mu_6 \l \dr_j \dr_k \A_{ki} + \dr_i \dr_k \A_{kj} , \A_{ij} \r \\
      =& -\lambda_2 |\A \d|^2_{L^2} + \lambda_2 \l \dr_k \A_{ki} , \dr_j \B_{ji} \r - 2 \mu_6 |\A \dr|^2_{L^2} \\
      =& - (\mu_5 + \mu_6 ) |\A \dr|^2_{L^2} + \lambda_2 \l \A \dr , \B \dr \r \,.
    \end{aligned}
  \end{equation}

  Finally, we again make use of the same arguments in \eqref{BEL-3} or \eqref{BEL-4} and then we yield
  \begin{equation}\label{BEL-5}
    \begin{aligned}
      & \l \partial_j ( \mu_2 \dr_j \dot{\dr}_i + \mu_3 \dr_j \dot{\dr}_i ), \v_i \r = - \l \mu_2 \dr_j \dot{\dr}_i + \mu_3 \dr_j \dot{\dr}_i , \partial_j \v_i \r \\
      =& - \l (\mu_2 - \mu_3) \dr_j \dot{\dr}_i + \mu_3 ( \dr_j \dot{\dr}_i + \dr_i \dot{\dr}_j ), \A_{ij} + \B_{ij} \r \\
      =& - \lambda_1 \l \dr_j \dot{\dr}_i, \A_{ij} + \B_{ij} \r - \mu_3 \l \dr_j \dot{\dr}_i + \dr_i \dot{\dr}_j , \A_{ij} \r \\
      =& - \lambda_1 \l \dr_j \dot{\dr}_i, \A_{ij}  \r + \lambda_1 \l \dr_j \dot{\dr}_i,  \B_{ji} \r - 2 \mu_3 \l \dr_j \dot{\dr}_i, \A_{ij}  \r \\
      =& \lambda_2 \l \dot{\dr} , \A \dr \r + \lambda_1 \l \dot{\dr} , \B \dr \r \,.
    \end{aligned}
  \end{equation}

  Therefore, by plugging the equalities \eqref{BEL-2}, \eqref{BEL-3}, \eqref{BEL-4}, \eqref{BEL-5} into \eqref{BEL-1}, we have
  \begin{equation*}
    \begin{aligned}
      & \tfrac{1}{2} \tfrac{\d}{\d t} \big{(} |\v|^2_{L^2} + \rho_1 |\dot{\dr}|^2_{L^2} + |\nabla \dr|^2_{L^2} \big{)} + \tfrac{1}{2} \mu_4 |\nabla \v|^2_{L^2} + \mu_1 |\dr^\top \A \dr|^2_{L^2} \\
      & - \lambda_1 |\dot{\dr} + \B \dr|^2_{L^2} - 2 \lambda_2 \l \dot{\dr} + \B \dr , \A \dr \r + (\mu_5 + \mu_6) |\A \dr|^2_{L^2} = 0 \,,
    \end{aligned}
  \end{equation*}
  and, consequently, we complete the proof of Proposition \ref{Prop-Basic-Ener-Law}.
\end{proof}

\section{A priori estimate}\label{Sec-APE}

In this section, we derive {\em a priori} estimate. We assume $(\v,\dr)$ is a smooth solution to the system \eqref{PHLC}-\eqref{Inital-Data}, and introduce the following energy functionals:
\begin{equation*}
  \begin{aligned}
    E(t) =& |\v|^2_{H^s} + \rho_1 |\dot{\dr}|^2_{H^s} + |\nabla \dr|^2_{H^s} \,,
  \end{aligned}
\end{equation*}
and for the case of $\lambda_1 < 0$, i.e. \eqref{Coeffs-3}
\begin{equation*}
  \begin{aligned}
    D(t) =& \tfrac{1}{2} \mu_4 |\nabla \v|^2_{H^s} + \mu_1 \sum_{k=0}^s |\dr^\top (\nabla^k \A) \dr|^2_{L^2}  - \lambda_1 \sum_{k=0}^s |\nabla^k \dot{\dr} + (\nabla^k \B) \dr + \tfrac{\lambda_2}{\lambda_1} (\nabla^k \A) \dr |^2_{L^2} \\
    & + ( \mu_5 + \mu_6 + \tfrac{\lambda_2^2}{\lambda_1}  ) \sum_{k=0}^s |(\nabla^k \A) \dr|^2_{L^2} \,.
  \end{aligned}
\end{equation*}
If the case $\lambda_1 = 0$, i.e. \eqref{Coeffs-4} holds, the energy functional $D(t)$ is defined as
\begin{equation*}
  \begin{aligned}
    D(t) =& \tfrac{1}{4} \delta \mu_4 |\nabla \v|^2_{H^s} + \mu_1 \sum_{k=0}^s |\dr^\top (\nabla^k \A) \dr|^2_{L^2}  + ( \mu_5 + \mu_6 - \tfrac{2 \lambda_2^2}{(1-\delta) \mu_4}  ) \sum_{k=0}^s |(\nabla^k \A) \dr|^2_{L^2}  \\
    & + \tfrac{1}{2} (1 - \delta) \mu_4  \sum_{k=0}^s \Big( |\nabla^{k+1} \v|_{L^2} - \tfrac{2 |\lambda_2|}{(1- \delta) \mu_4} |(\nabla^k \A) \dr|_{L^2} \Big)^2 \,.
  \end{aligned}
\end{equation*}

\begin{lemma}\label{Lm-APE}
  Let $s > \tfrac{n}{2} + 1$. Assume $(\v,\dr)$ is a smooth solution to the system \eqref{PHLC}-\eqref{Inital-Data}. Then there exists a constant $C > 0$, depending only on Leslie coefficients and inertia density constant $\rho_1$, such that
  \begin{equation*}
    \tfrac{1}{2} \tfrac{\d}{\d t} E(t) + D(t) \leq C E^2 (t) + C \sum_{p=1}^4 E^\frac{p+1}{2} (t) D^\frac{1}{2} (t) \,.
  \end{equation*}
\end{lemma}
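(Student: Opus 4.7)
The plan is to derive the $H^s$-level energy inequality by commuting $\nabla^k$ through the system \eqref{PHLC} for each $0 \leq k \leq s$, and reproducing the algebraic structure of the basic energy law of Proposition \ref{Prop-Basic-Ener-Law} at every differentiated level. Concretely, I apply $\nabla^k$ to the momentum equation and take the $L^2$ inner product against $\nabla^k \v$, and apply $\nabla^k$ to the director equation and take the $L^2$ inner product against $\nabla^k \dot{\dr}$. Summing over $0 \leq k \leq s$ and adding the two families produces $\tfrac{1}{2}\tfrac{\d}{\d t} E(t)$ on the left-hand side (modulo commutator errors), plus a collection of bilinear and trilinear spatial terms that must be shown to form $D(t)$ together with acceptable remainders.

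The first critical step is the cancellation of the most singular cross coupling $\l -\nabla^k \div(\nabla \dr \odot \nabla \dr), \nabla^k \v \r$ against $\l \nabla^k \Delta \dr, \nabla^k \dot{\dr} \r$. Writing $\dot{\dr} = \partial_t \dr + \v \cdot \nabla \dr$, integrating by parts, and using $\div \v = 0$, the top-order parts cancel to produce $\tfrac{1}{2}\tfrac{\d}{\d t}|\nabla^{k+1}\dr|^2_{L^2}$ modulo commutators of the form $\l [\nabla^k, \v \cdot \nabla] \dr, \nabla^k \Delta \dr \r$ and Leibniz-remainders of $\nabla^k (\nabla \dr \odot \nabla \dr)$ tested against $\nabla^{k+1} \v$. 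These are bounded by standard Moser product estimates: since $s > \tfrac{n}{2}+1$ gives $H^s \hookrightarrow W^{1,\infty}$, every $L^\infty$ factor is controlled by $E^{1/2}$, and any surviving derivative is absorbed either into $\nabla^{k+1}\v$ (producing a $D^{1/2}$) or into factors of $\nabla^j \dr$, $\nabla^j \dot{\dr}$ (producing $E^{1/2}$).

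Next I process the Leslie stress $\nabla^k \tilde{\sigma}$ paired with $\nabla^{k+1}\v$ and the Ericksen-type terms $\nabla^k [\lambda_1 (\dot{\dr} + \B \dr) + \lambda_2 \A \dr]$ paired with $\nabla^k \dot{\dr}$, together with the multiplier piece $\nabla^k(\gamma \dr)$. Following exactly the algebraic manipulations \eqref{BEL-2}--\eqref{BEL-5}, the ``principal'' contribution (where $\nabla^k$ lands on $\A,\B,\dot{\dr}$ and not on the unit vector $\dr$) produces
\[
 \mu_1 |\dr^\top (\nabla^k \A) \dr|^2_{L^2} - \lambda_1 |\nabla^k \dot{\dr} + (\nabla^k \B) \dr|^2_{L^2} - 2 \lambda_2 \l \nabla^k \dot{\dr} + (\nabla^k \B) \dr , (\nabla^k \A) \dr \r + (\mu_5 + \mu_6) |(\nabla^k \A) \dr|^2_{L^2}.
\]
Under \eqref{Coeffs-3}, completion of the square in the vector $\nabla^k \dot{\dr} + (\nabla^k \B) \dr$ with weight $-\lambda_1 > 0$ reproduces exactly the first definition of $D(t)$; under \eqref{Coeffs-4} the cross term is instead absorbed into a fraction $\tfrac{1}{2}(1-\delta)\mu_4 |\nabla^{k+1}\v|^2_{L^2}$ of the viscosity by Young's inequality, which is precisely how the perfect-square form in the second definition of $D(t)$ has been set up.

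The main obstacle is the systematic bookkeeping of every commutator remainder and of the multiplier term $\nabla^k(\gamma \dr)$, where $\gamma = -\rho_1 |\dot{\dr}|^2 + |\nabla \dr|^2 - \lambda_2 \dr^\top \A \dr$ is quadratic in the unknowns and hence after $\nabla^k$ yields cubic to quartic polynomials that must eventually be matched against $\nabla^k \dot{\dr}$. The delicate point is that $\nabla^k \dot{\dr}$ is \emph{not} the material derivative of $\nabla^k \dr$, so $[\nabla^k, \v \cdot \nabla]\dr$ must be tracked as a genuine source everywhere $\dot{\dr}$ appears, in particular inside $\nabla^k(\gamma \dr)$ and inside the coupling $\nabla^k(\lambda_1 \dot{\dr})$ coming from the Leslie stress. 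Using $H^s \hookrightarrow W^{1,\infty}$ and the Moser product estimate $\|fg\|_{H^s} \lesssim \|f\|_{L^\infty}\|g\|_{H^s} + \|f\|_{H^s}\|g\|_{L^\infty}$ repeatedly, every remainder becomes a monomial in $\|\v\|_{H^s}, \|\nabla \dr\|_{H^s}, \|\dot{\dr}\|_{H^s}, \|\nabla \v\|_{H^s}$ of total degree between $2$ and $5$, with at most one factor of $\|\nabla \v\|_{H^s}$ that is charged to $D^{1/2}$. Collecting these gives exactly $C E^2(t) + C \sum_{p=1}^{4} E^{(p+1)/2}(t) D^{1/2}(t)$, which establishes the claimed inequality.
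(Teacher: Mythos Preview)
Your overall strategy matches the paper's: commute $\nabla^k$ through both equations, pair with $\nabla^k\v$ and $\nabla^k\dot{\dr}$, reproduce the algebra of Proposition~\ref{Prop-Basic-Ener-Law} on the principal symbols to generate $D(t)$, and treat everything else as commutator remainders controlled by Moser-type product bounds. The cancellation you describe for the $\nabla\dr\odot\nabla\dr$ coupling and the completion-of-squares producing the two versions of $D(t)$ are exactly what the paper does in \eqref{APE-5}--\eqref{APE-5*}.

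There is, however, a genuine gap in your treatment of the Lagrange multiplier term $\l \nabla^k(\gamma\dr),\nabla^k\dot{\dr}\r$. When all $k$ derivatives fall on $\gamma$ (so the remaining factor is the undifferentiated $\dr$), a naive H\"older/Moser bound gives only $|\nabla^k\gamma|_{L^2}\,|\nabla^k\dot{\dr}|_{L^2}$. From the three pieces of $\gamma$ this produces, for instance, $|\nabla\dr|_{H^s}^2|\dot{\dr}|_{H^s}\sim E^{3/2}$ and $|\lambda_2|\,|\nabla\v|_{H^s}|\dot{\dr}|_{H^s}\sim E^{1/2}D^{1/2}$, neither of which is dominated by $CE^2+C\sum_{p=1}^4 E^{(p+1)/2}D^{1/2}$. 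Your final degree count (``total degree between $2$ and $5$'') is therefore incompatible with the stated inequality. The missing ingredient is the constraint $\dr\cdot\dot{\dr}=0$: writing $\dr\cdot\nabla^k\dot{\dr}=-\sum_{q\geq 1}\nabla^{k-q}\dot{\dr}\cdot\nabla^q\dr$ yields $|\dr\cdot\nabla^k\dot{\dr}|_{L^2}\lesssim |\dot{\dr}|_{H^s}|\nabla\dr|_{H^s}$, which supplies one extra factor of $E^{1/2}$ and upgrades the offending terms to $E^2$ and $E D^{1/2}$ respectively. This is precisely how the paper handles $I_3$ in \eqref{APE-4}, and without it the claimed bound does not close.
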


\begin{proof}
  For all $0 \leq k \leq s$, we act $\nabla^k$ on the first equation of \eqref{PHLC} and take $L^2$-inner product with $\nabla^k \v$ and then we have
  \begin{equation*}
    \begin{aligned}
      & \tfrac{1}{2} \tfrac{\d}{\d t} |\nabla^k \v|^2_{L^2} + \tfrac{1}{2} \mu_4 |\nabla^{k+1} \v|^2_{L^2} \\
      =& - \l \nabla^k (\v \cdot \nabla \v) , \nabla^k \v \r - \l \nabla^k \div (\nabla \dr \odot \nabla \dr), \nabla^k \v \r + \l \nabla^k \div \tilde{\sigma}, \nabla^k \v \r \,.
    \end{aligned}
  \end{equation*}
  Again via acting $\nabla^k$ on the third equation of \eqref{PHLC} and taking $L^2$-inner product with $\nabla^k \dot{\dr}$, we have
  \begin{equation*}
    \begin{aligned}
      & \tfrac{1}{2} \tfrac{\d}{\d t} \big{(} \rho_1 |\nabla^k \dot{\dr}|^2_{L^2} + |\nabla^{k+1} \dr|^2_{L^2} \big{)} \\
      =& - \l \nabla^k ( \rho_1 \v \cdot \nabla \dot{\dr} ) , \nabla^k \dot{\dr} \r + \l \nabla^k \Delta \dr, \nabla^k ( \v \cdot \nabla \dr ) \r  \\
      & + \l \nabla^k (\gamma \dr), \nabla^k \dot{\dr} \r + \lambda_1 \l \nabla^k (\dot{\dr} + \B \dr ), \nabla^k \dot{\dr} \r + \lambda_2 \l \nabla^k (\A \dr), \nabla^k \dot{\dr} \r \,.
    \end{aligned}
  \end{equation*}
  Therefore, we have
  \begin{equation}\label{APE-1}
    \begin{aligned}
      & \tfrac{1}{2} \tfrac{\d}{\d t} \Big{(} |\nabla^k \v|^2_{L^2} + \rho_1 |\nabla^k \dot{\dr}|^2_{L^2} + |\nabla^{k+1} \dr|^2_{L^2} \Big{)} + \tfrac{1}{2} \mu_4 |\nabla^{k+1} \v|^2_{L^2} \\
      = & - \l \nabla^k (\v \cdot \nabla \v) , \nabla^k \v \r - \l \nabla^k ( \rho_1 \v \cdot \nabla \dot{\dr} ) , \nabla^k \dot{\dr} \r \qquad\qquad\qquad\qquad\qquad\quad I_1 \\
      & - \l \nabla^k \div (\nabla \dr \odot \nabla \dr), \nabla^k \v \r + \l \nabla^k \Delta \dr, \nabla^k ( \v \cdot \nabla \dr ) \r \qquad\qquad\qquad\quad\quad\ \ I_2 \\
      & + \l \nabla^k (\gamma \dr), \nabla^k \dot{\dr} \r \qquad\qquad\qquad\qquad\qquad\qquad\qquad\qquad\qquad\qquad\qquad\ \ I_3 \\
      & + \l \nabla^k \div \tilde{\sigma}, \nabla^k \v \r  + \lambda_1 \l \nabla^k (\dot{\dr} + \B \dr ), \nabla^k \dot{\dr} \r + \lambda_2 \l \nabla^k (\A \dr), \nabla^k \dot{\dr} \r  \ \qquad\ I_4\\
      \equiv & \ I_1 + I_2 + I_3 + I_4\,.
    \end{aligned}
  \end{equation}
  Here we divide the righthand side terms of the above equality \eqref{APE-1} into four layers. If $k=0$, the equality \eqref{APE-1} implies the basic energy law \eqref{Basic-Ener-Law} (in this case, $I_1 = I_2 = I_3 = 0$). If $1 \leq k \leq s$, the layers $I_1$, $I_2$ and $I_3$ can be controlled by the free energy terms in left hand side of \eqref{APE-1}. For the layer $I_4$, the intermediate derivatives terms can be controlled by the free energy terms in left hand side of \eqref{APE-1}, and the endpoint derivatives terms reduce to the dissipated terms with the same form in basic energy law \eqref{Basic-Ener-Law}.

  Now we estimate \eqref{APE-1} term by term for $1 \leq  k \leq s$.

  We take advantage of H\"older inequality, Sobolev embedding and the fact $\div \v = 0$, then we have
  \begin{equation}\label{APE-2}
    \begin{aligned}
      I_1 =& - \sum_{\substack{a+b=k \\ a \geq 1}} \l \nabla^a \v \nabla^{b+1} \v , \nabla^k \v \r - \rho_1 \sum_{\substack{a+b=k \\ a \geq 1}} \l \nabla^a \v \nabla^{b+1} \dot{\dr} , \nabla^k \dot{\dr} \r \\
      \lesssim & \sum_{\substack{a+b=k \\ a \geq 1}} | \nabla^a \v |_{L^4} |\nabla^{b+1} \v|_{L^4} |\nabla^k \v|_{L^2} \\
       & + \rho_1 |\nabla \v|_{L^\infty} |\nabla^k \dot{\dr}|^2_{L^2} + \rho_1 \sum_{\substack{a+b=k \\ a \geq 2}} |\nabla^a \v|_{L^4} |\nabla^{b+1} \dot{\dr}|_{L^4} |\nabla^k \dot{\dr}|_{L^2} \\
      \lesssim & |\nabla \v|_{H^s} |\v|^2_{\dot{H}^s} + \rho_1 |\nabla \v|_{H^s} |\dot{\dr}|^2_{H^s} \,.
    \end{aligned}
  \end{equation}

  For the second layer $I_2$, the endpoint derivative terms will vanish with same reason of $I_2 = 0$ in the case $k = 0$. So we estimate
    \begin{align}\label{APE-3}
      \no I_2 =& - \l \nabla^k \partial_i \dr_p \Delta \dr_p, \nabla^k \v_i \r  - \sum_{\substack{a+b=k \\ 1 \leq a \leq k -1}} \l \nabla^a \partial_i \dr_p \nabla^b \Delta \dr_p, \nabla^k \v_i \r \\
      \no & - \sum_{\substack{ a+b=k \\ 1 \leq b \leq k-1 }} \l \nabla^k \partial_j \dr_p , \nabla^a \partial_j \v_i \nabla^b \partial_i \dr_p + \nabla^a \v_i \nabla^b \partial_i \partial_j \dr_p \r - \l \nabla^k \partial_j \dr_p , \partial_j \v_i \nabla^k \partial_i \dr_p \r \\
       \lesssim & |\nabla^{k+1} \dr|_{L^2} |\Delta \dr|_{L^4} |\nabla^k \v|_{L^4}  + |\nabla^{k+1} \dr|^2_{L^2} |\nabla \v|_{L^\infty}\\
       \no & + \sum_{\substack{a+b=k \\ 1 \leq a \leq k -1}} |\nabla^b \Delta \dr_p|_{L^2} |\nabla^a \partial_i \dr_p|_{L^4} |\nabla^k \v_i|_{L^4} \\
      \no & + \sum_{\substack{a+b=k \\ 1 \leq b \leq k -1}} |\nabla^k \partial_j \dr_p|_{L^2} \big(  | \nabla^a \partial_j \v_i |_{L^4} | \nabla^b \partial_i \dr_p |_{L^4} + | \nabla^a \v_i |_{L^\infty} | \nabla^b \partial_i \partial_j \dr_p |_{L^2}   \big) \\
      \no \lesssim & |\nabla \dr|_{H^s} |\nabla \dr|_{\dot{H}^s} |\nabla \v|_{H^s} \,.
    \end{align}
  Here we make use of H\"older inequality, Sobolev embedding and the fact $\div \v = 0$.

  For the term $I_3$, we have
  \begin{equation*}
    I_3 = \l \nabla^k \gamma, \dr \cdot \nabla^k \dot{\dr} \r + \sum_{\substack{a+b=k \\ a \leq k-1}} \l \nabla^a \gamma \nabla^b \dr , \nabla \dot{\dr} \r \,.
  \end{equation*}
  Recalling the structure of Lagrangian \eqref{Lagrange-Multiplier}, for convenience of estimation, the $\gamma$ can be divided into three parts:
  \begin{equation*}
    \begin{aligned}
      & \sum_{\substack{a+b=k \\ a \leq k-1}} \l \nabla^a ( -\rho_1 |\dot{\dr}|^2 ) \nabla^b \dr , \nabla^k \dot{\dr} \r =  - \rho_1 \sum_{\substack{a+b+c=k \\ c \geq 1}} \l \nabla^a \dot{\dr} \nabla^b \dot{\dr} \nabla^c \dr , \nabla^k \dot{\dr} \r \\
      \lesssim & \rho_1 |\nabla^k \dr|_{L^4} |\dot{\dr}|_{L^\infty} |\dot{\dr}|_{L^4} |\nabla^k \dot{\dr}|_{L^2} + \rho_1 \sum_{\substack{a+b+c=k \\ 1 \leq c \leq k- 1}} |\nabla^c \dr|_{L^\infty} |\nabla^a \dot{\dr}|_{L^4} |\nabla^b \dot{\dr}|_{L^4} |\nabla^k \dot{\dr}|_{L^2} \\
      \lesssim & \rho_1 |\nabla \dr|_{H^s} |\dot{\dr}|^3_{H^s}\,,
    \end{aligned}
  \end{equation*}
  and similarly
  \begin{equation*}
    \begin{aligned}
      & \sum_{\substack{a+b=k \\ a \leq k-1}} \l \nabla^a (  |\nabla \dr|^2 ) \nabla^b \dr , \nabla^k \dot{\dr} \r \\
      = & - \l \div ( |\nabla \dr|^2 \nabla^k \dr ) , \nabla^{k-1} \dot{\dr} \r + \sum_{\substack{a+b+c=k \\ 1 \leq c \leq k-1}} \l \nabla^{a+1} \dr \nabla^{b+1} \dr \nabla^c \dr, \nabla^k \dot{\dr} \r \\
      \lesssim & |\nabla \dr|^2_{H^s} |\nabla \dr|_{\dot{H}^s} |\dot{\dr}|_{H^s} \,,
    \end{aligned}
  \end{equation*}
  and by the same arguments in the above estimation
    \begin{align*}
       & \sum_{\substack{a+b=k \\ a \leq k-1}} \l \nabla^a ( - \lambda_2 (\dr^\top \A \dr) \dr  ) \nabla^b \dr , \nabla^k \dot{\dr} \r \lesssim  |\lambda_2| \sum_{\substack{a+b+c+e=k \\ a,c,e \geq 1}} \l |\nabla^a \dr| |\nabla^{b+1} \v| |\nabla^c \dr| |\nabla^e \dr|, |\nabla^k \dot{\dr}| \r \\
      & + 3 |\lambda_2| \sum_{\substack{a+b+c=k \\ a,c \geq 1}} \l |\nabla^a \dr| |\nabla^{b+1} \v| |\nabla^c \dr| , |\nabla^k \dot{\dr}| \r  + 6 |\lambda_2| \sum_{\substack{a+b=k \\ a \geq 1}} \l |\nabla^a \dr| |\nabla^{b+1} \v| , |\nabla^k \dot{\dr}| \r \\
      & \lesssim  |\lambda_2| ( |\nabla \dr|_{H^s} + |\nabla \dr|^2_{H^s} + |\nabla \dr|^3_{H^s} + |\nabla \dr|^4_{H^s} ) |\dot{\dr}|_{H^s} |\nabla \v|_{H^s} \,.
    \end{align*}
  Thus, we have
  \begin{equation*}
    \sum_{\substack{a+b=k \\ a \leq k-1}} \l \nabla^a \gamma \nabla^b \dr , \nabla^k \dot{\dr} \r \lesssim \rho_1 |\nabla \dr|_{H^s} |\dot{\dr}|^3_{H^s} + |\nabla \dr|^2_{H^s} |\nabla \dr|_{\dot{H}^s} |\dot{\dr}|_{H^s} + |\lambda_2| \sum_{p=1}^3 |\nabla \dr|^p_{H^s}  |\dot{\dr}|_{H^s} |\nabla \v|_{H^s}\,.
  \end{equation*}
  Notice that $ \l \nabla^k \gamma, \dr \cdot \nabla^k \dot{\dr} \r \lesssim |\nabla^k \gamma|_{L^2} | \dr \cdot \nabla^k \dot{\dr} |_{L^2} $. Since $\dr \cdot \dot{\dr} = 0$, we have
   $$ \dr \cdot \nabla^k \dot{\dr} = - \sum\limits_{\substack{p+q=k \\ q \geq 1}} \nabla^p \dot{\dr} \nabla^q \dr \,.$$
   Then
  \begin{equation*}
    \begin{aligned}
      | \dr \cdot \nabla^k \dot{\dr} |_{L^2} \lesssim \sum_{\substack{p+q=k \\ q \geq 1}} |\nabla^p \dot{\dr}|_{L^4} |\nabla^q \dr|_{L^4} \lesssim |\dot{\dr}|_{H^s} |\nabla \dr|_{H^s} \,,
    \end{aligned}
  \end{equation*}
  and
  \begin{equation*}
    \begin{aligned}
      |\nabla^k \gamma|_{L^2} \lesssim & \rho_1 \sum_{a+b=k} |\nabla^a \dot{\dr} \nabla^b \dot{\dr}|_{L^2} + \sum_{a+b=k} |\nabla^{a+1} \dr \nabla^{b+1} \dr|_{L^2}  + |\lambda_2| \sum_{a+b+c=k} |\nabla^a \dr^\top \nabla^b \A \nabla^c \dr|_{L^2} \\
      \lesssim & \rho_1 |\dot{\dr}|^2_{H^s} + |\nabla \dr|_{H^s} |\nabla \dr|_{\dot{H}^s} + |\lambda_2| \sum_{p=0}^2 |\nabla \dr|^p_{H^s} |\nabla \v|_{H^s}\,.
    \end{aligned}
  \end{equation*}
  Therefore, we estimate the term $I_3$ as
  \begin{equation}\label{APE-4}
    I_3 \lesssim \rho_1 |\nabla \dr|_{H^s} |\dot{\dr}|^3_{H^s} + |\nabla \dr|^2_{H^s} |\nabla \dr|_{\dot{H}^s} |\dot{\dr}|_{H^s} + |\lambda_2| \sum_{p=1}^3 |\nabla \dr|^p_{H^s}  |\dot{\dr}|_{H^s} |\nabla \v|_{H^s}\,.
  \end{equation}

  The term $I_4$ can be rewritten as the endpoint derivative terms part $I_4^{ep}$ and the intermediate derivative terms part $I_4^{im}$, hence
  $$ I_4 = I_4^{ep} + I_4^{im} \,, $$
  where
  \begin{equation*}
    \begin{aligned}
      I_4^{ep} = & - \mu_1 \l  \dr_p \dr_q \nabla^k \A_{pq} \dr_i \dr_j  , \nabla^k \partial_j \v_i \r - \l \mu_2 \dr_j \nabla^k \dot{\dr}_i + \mu_3 \dr_i \nabla^k \dot{\dr}_j , \nabla^k \partial_j \v_i \r \\
      & - \l \mu_2 \dr_j \nabla^k \B_{pi} \dr_p + \mu_3 \dr_i \nabla^k \B_{pj} \dr_p , \nabla^k \partial_j \v_i \r \\
      & - \l \mu_5 \dr_j \dr_p \nabla^k \A_{pi} + \mu_6 \dr_i \dr_p \nabla^k \A_{pj}, \nabla^k \partial_j \v_i \r \\
      & + \lambda_1 |\nabla^k \dot{\dr}|^2_{L^2} + \lambda_1 \l (\nabla^k \B ) \dr , \nabla^k \dot{\dr} \r + \lambda_2 \l (\nabla^k \A ) \dr , \nabla^k \dot{\dr} \r \,,
    \end{aligned}
  \end{equation*}
  and
  \begin{equation*}
    \begin{aligned}
      I_4^{im} = & - \mu_1 \sum_{\substack{a+b=k \\ a \geq 1}} \l \nabla^a (\dr_p \dr_q \dr_i \dr_j) \nabla^b \A_{pq} , \nabla^k \partial_j \v_i \r \\
      & - \sum_{\substack{a+b=k \\ a \geq 1}} \l \mu_2 \nabla^a \dr_j \nabla^b \dot{\dr}_i + \mu_3 \nabla^a \dr_i \nabla^b \dot{\dr}_j , \nabla^k \partial_j \v_i \r \\
      & - \sum_{\substack{a+b=k \\ a \geq 1}} \l \mu_2 \nabla^a ( \dr_j \dr_p ) \nabla^b \B_{pi} + \mu_3 \nabla^a ( \dr_i \dr_p ) \nabla^b \B_{pj} , \nabla^k \partial_j \v_i \r \\
      & - \sum_{\substack{a+b=k \\ a \geq 1}} \l \mu_5 \nabla^a (\dr_j \dr_p) \nabla^b \A_{pi} + \mu_6 \nabla^a (\dr_i \dr_p) \nabla^b \A_{pj} , \nabla^k \partial_j \v_i \r \\
      & + \sum_{\substack{a+b=k \\ b \geq 1}} \l \lambda_1 \nabla^a \B \nabla^b \dr + \lambda_2 \nabla^a \A \nabla^b \dr , \nabla^k \dot{\dr} \r \\
      \equiv & \ I_{41}^{im} + I_{42}^{im} + I_{43}^{im} + I_{44}^{im} + I_{45}^{im}\,.
    \end{aligned}
  \end{equation*}
  According to the same arguments of deriving the basic energy law in Section \ref{Sec-BEL}, one can calculate the endpoint derivative terms part $I_4^{ep}$ as 
  \begin{equation}\label{APE-5}
    \begin{aligned}
      I_4^{ep} =& - \mu_1 |\dr^\top (\nabla^k \A) \dr|^2_{L^2} + \lambda_1 |\nabla^k \dot{\dr} + (\nabla^k \B) \dr + \tfrac{\lambda_2}{\lambda_1} (\nabla^k \A) \dr |^2_{L^2} \\
      & - ( \mu_5 + \mu_6 + \tfrac{\lambda_2^2}{\lambda_1} ) |(\nabla^k \A) \dr|^2_{L^2}
    \end{aligned}
  \end{equation}
 for the case $\lambda_1 < 0$, i.e. \eqref{Coeffs-3}, while for the case $\lambda_1 = 0$, i.e. the relations \eqref{Coeffs-4} it is
  \begin{equation}\label{APE-5*}
    \begin{aligned}
      I_4^{ep} =& - \mu_1 |\dr^\top (\nabla^k \A) \dr|^2_{L^2} - \tfrac{1}{2} (1-\delta) \mu_4 \big( |\nabla^{k+1} \v|_{L^2} - \tfrac{2 |\lambda_2|}{(1-\delta) \mu_4} | ( \nabla^k \A ) \dr |_{L^2} \big)^2 \\
      & + \tfrac{1}{2} (1-\delta) \mu_4 |\nabla^{k+1} \v|^2_{L^2} - ( \mu_5 + \mu_6 - \tfrac{2 \lambda_2^2}{(1-\delta) \mu_4} ) |(\nabla^k \A) \dr|^2_{L^2} \,.
    \end{aligned}
  \end{equation}
  It remains to estimate the intermediate derivative terms part $I_4^{im} $. We make use of H\"older inequality, Sobolev embedding and the fact $|\dr| = 1$ to estimate it term by term:
  \begin{equation*}
    \begin{aligned}
      I_{41}^{im} \lesssim & \mu_1 \sum_{\substack{a+b=k \\ a \geq 1}} \sum_{\substack{a_1+a_2+a_3+a_4=a \\ a_1,a_2,a_3,a_4 \geq 1 }} \l |\nabla^{a_1} \dr| |\nabla^{a_2} \dr| |\nabla^{a_3} \dr| |\nabla^{a_4} \dr| |\nabla^{b+1} \v|, |\nabla^{k+1} \v| \r \\
      & + 4 \mu_1 \sum_{\substack{a+b=k \\ a \geq 1}} \sum_{\substack{a_1+a_2+a_3 =a \\ a_1,a_2,a_3 \geq 1 }} \l |\nabla^{a_1} \dr| |\nabla^{a_2} \dr| |\nabla^{a_3} \dr| |\nabla^{b+1} \v|, |\nabla^{k+1} \v| \r \\
      & + 12 \mu_1 \sum_{\substack{a+b=k \\ a \geq 1}} \sum_{\substack{a_1+a_2 =a \\ a_1,a_2  \geq 1 }} \l |\nabla^{a_1} \dr| |\nabla^{a_2} \dr| |\nabla^{b+1} \v|, |\nabla^{k+1} \v| \r \\
       & + 24 \mu_1 \sum_{\substack{a+b=k \\ a \geq 1}} \l |\nabla^{a} \dr| |\nabla^{b+1} \v|, |\nabla^{k+1} \v| \r \\
      \lesssim & \mu_1 \big{(} |\nabla \dr|_{H^s} + |\nabla \dr|^2_{H^s} + |\nabla \dr|^3_{H^s} + |\nabla \dr|^4_{H^s} \big{)} |\v|_{\dot{H}^s} |\nabla \v|_{H^s} \,,
    \end{aligned}
  \end{equation*}
  and
  \begin{equation*}
    \begin{aligned}
      I_{42}^{im} \lesssim & ( |\mu_2| + |\mu_3| ) \sum_{\substack{a+b=k \\ a \geq 1}} | \nabla^a \dr|_{L^4} |\nabla^b \dot{\dr}|_{L^4} |\nabla^{k+1} \v|_{L^2} \\
      \lesssim & ( |\mu_2| + |\mu_3| ) |\nabla \dr|_{H^s} |\dot{\dr}|_{H^s} |\nabla \v|_{H^s}\,,
    \end{aligned}
  \end{equation*}
  and
  \begin{equation*}
    \begin{aligned}
      I_{43}^{im} = & - \sum_{\substack{a+b=k \\ a \geq 1}} \sum_{a_1 + a_2 =a} \l \mu_2 \nabla^{a_1} \dr_j \nabla^{a_2} \dr_p \nabla^b \B_{pi} + \mu_3 \nabla^{a_1} \dr_i \nabla^{a_2} \dr_p \nabla^b \B_{pj} , \nabla^k \partial_j \v_i \r \\
      \lesssim & ( |\mu_2| + |\mu_3| ) \sum_{\substack{a+b=k \\ a \geq 1}} \l |\nabla^a \dr| |\nabla^{b+1} \v|, |\nabla^{k+1} \v| \r \\
      & + ( |\mu_2| + |\mu_3| ) \sum_{\substack{a_1+a_2+b=k \\ a_1,a_2 \geq 1}} \l |\nabla^{a_1} \dr| |\nabla^{a_2} \dr| |\nabla^{b+1} \v|, |\nabla^{k+1} \v| \r \\
      \lesssim & ( |\mu_2| + |\mu_3| ) ( |\nabla \dr|_{H^s} + |\nabla \dr|^2_{H^s} ) |\v|_{\dot{H}^s} |\nabla \v|_{H^s} \,,
    \end{aligned}
  \end{equation*}
  and similarly
  \begin{equation*}
    \begin{aligned}
      I_{44}^{im} \lesssim ( |\mu_5| + |\mu_6| ) ( |\nabla \dr|_{H^s} + |\nabla \dr|^2_{H^s} ) |\v|_{\dot{H}^s} |\nabla \v|_{H^s} \,,
    \end{aligned}
  \end{equation*}
  and
  \begin{equation*}
    \begin{aligned}
      I_{45}^{im} \lesssim & ( |\lambda_1| + |\lambda_2| ) \sum_{\substack{a+b=k \\ b \geq 1}} \l |\nabla^{a+1} \v| |\nabla^b \dr|, |\nabla^k \dot{\dr}| \r \\
      \lesssim & ( |\lambda_1| + |\lambda_2| ) |\nabla \dr|_{H^s} |\dot{\dr}|_{H^s} |\nabla \v|_{H^s}\,.
    \end{aligned}
  \end{equation*}
  Thus, we have estimated
  \begin{equation}\label{APE-6}
    \begin{aligned}
      I_{4}^{im} \lesssim ( \mu_1 + |\mu_2| + |\mu_3| + |\mu_5| + |\mu_6| ) \sum_{p=1}^4 |\nabla \dr|^p_{H^s} ( |\v|_{\dot{H}^s} + |\dot{\dr}|_{H^s}  ) |\nabla \v|_{H^s}\,.
    \end{aligned}
  \end{equation}

  Therefore, if we plug the inequalities \eqref{APE-2}, \eqref{APE-3}, \eqref{APE-4}, \eqref{APE-5} and \eqref{APE-6} into \eqref{APE-1} and sum up for all $0 \leq k \leq s$, we obtain the inequality
  \begin{equation}\label{APE-Hs}
    \begin{aligned}
      & \tfrac{1}{2} \tfrac{\d}{\d t} \Big{(} |\v|^2_{H^s} + \rho_1 |\dot{\dr}|^2_{H^s} + |\nabla \dr|^2_{H^s} \Big{)}  + \mu_1 \sum_{k=0}^s |\dr^\top (\nabla^k \A) \dr|^2_{L^2} + \tfrac{1}{2} \mu_4 |\nabla \v|^2_{H^s} \\
      & - \lambda_1 \sum_{k=0}^s |\nabla^k \dot{\dr} + (\nabla^k \B) \dr + \tfrac{\lambda_2}{\lambda_1} (\nabla^k \A) \dr |^2_{L^2} + ( \mu_5 + \mu_6 + \tfrac{\lambda_2^2}{\lambda_1}  ) \sum_{k=0}^s |(\nabla^k \A) \dr|^2_{L^2} \\
      \lesssim & \big{(} |\v|^2_{\dot{H}^s} + \rho_1 |\dot{\dr}|^2_{H^s} + |\nabla \dr|^2_{\dot{H}^s} \big{)} |\nabla \v|_{H^s}  + \rho_1 |\dot{\dr}|^3_{H^s} |\nabla \dr|_{H^s} + |\dot{\dr}|_{H^s} |\nabla \dr|_{\dot{H}^s} |\nabla \dr|^2_{H^s} \\
      & + ( \mu_1 + |\mu_2| + |\mu_3| + |\mu_5| + |\mu_6| ) \sum_{p=1}^4 |\nabla \dr|^p_{H^s} (  |\v|_{\dot{H}^s} + |\dot{\dr}|_{H^s} ) |\nabla \v|_{H^s}
    \end{aligned}
  \end{equation}
  for the case $\lambda_1 < 0$, i.e. \eqref{Coeffs-3}, while for the case $\lambda_1 = 0$, i.e. \eqref{Coeffs-4}, substituting the inequalities \eqref{APE-2}, \eqref{APE-3}, \eqref{APE-4}, \eqref{APE-5*} and \eqref{APE-6} into \eqref{APE-1} implies an inequality, which is yielded by replacing the last three terms in the left-hand side of the inequality \eqref{APE-Hs} with 
  \begin{equation*}
    \begin{aligned}
      & \tfrac{1}{2} \delta \mu_4 |\nabla \v |^2_{H^s} + ( \mu_5 + \mu_6 - \tfrac{2 \lambda_2^2}{ (1-\delta) \mu_4 } ) \sum_{k=0}^s |( \nabla^k \A ) \dr|^2_{L^2} \\
      & + \tfrac{1}{2} ( 1 - \delta ) \mu_4 \sum_{k=0}^s \big( |\nabla^{k+1} \v|_{L^2} - \tfrac{2 |\lambda_2| }{ (1 - \delta) \mu_4 } | (\nabla^k \A) \dr |_{L^2} \big)^2 \,.
    \end{aligned}
  \end{equation*}
  Then we complete the proof of Lemma \ref{Lm-APE}.

\end{proof}

\section{Lagrangian multiplier $\gamma$ and constraint $|\dr|=1$}\label{Sec-LagMult-Geom}
In this section, we prove the following Lemma on the relation between the Lagrangian multiplier $\gamma$ and the geometric constraint $|\dr|=1$.

\begin{lemma}\label{Parallel-d-Lemma}
  Assume $(\v, \dr)$ is a classical solution to the Ericksen-Leslie's hyperbolic system \eqref{PHLC}-\eqref{Inital-Data} satisfying $\v \in L^\infty(0,T;H^s(\R^n)) \cap L^2(0,T;H^{s+1}(\R^n))$, and $\nabla \dr \in L^\infty(0,T;H^{s}(\R^n))$, $\dot{\dr} \in L^\infty(0,T;H^s(\R^n))$ and $|\dr|_{L^\infty([0,T] \times \R^n)} < \infty$ for some $T \in (0, \infty)$, where $s > \frac{n}{2} + 1$.

  If the constraint $|\dr|=1$ is required, then the Lagrangian multiplier $\gamma$ is
  \begin{equation}\label{Lagrange-Multiplier1}
    \gamma = - \rho_1 |\dot{\dr}|^2 + |\nabla \dr|^2 - \lambda_2 \dr^\top \A \dr \, .
  \end{equation}

  Conversely, if we give the form of $\gamma$ as \eqref{Lagrange-Multiplier1} and $\dr$ satisfies the initial data conditions ${\tilde\dr}^{in} \cdot \dr^{in} = 0$, $|\dr^{in}|=1$, then $ |\dr| = 1 \, . $
\end{lemma}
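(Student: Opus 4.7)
The proof has two parts. For the forward direction, the plan is to take the scalar product of the third equation of \eqref{PHLC} with $\dr$ and use the four identities that follow from $|\dr|=1$: differentiating $|\dr|^2\equiv 1$ along the flow yields $\dr\cdot\dot{\dr}=0$ and then $\dr\cdot\ddot{\dr}=-|\dot{\dr}|^2$; the Laplacian identity gives $\dr\cdot\Delta\dr=\tfrac{1}{2}\Delta|\dr|^2-|\nabla\dr|^2=-|\nabla\dr|^2$; and the skew-symmetry of $\B$ yields $\dr\cdot\B\dr=0$. Substituting these into the dotted equation and solving algebraically for $\gamma$ immediately produces \eqref{Lagrange-Multiplier1}.

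For the converse, I would introduce the defect function $h(t,x):=|\dr(t,x)|^2-1$ and show that it solves a linear wave-type equation with vanishing initial data. Without assuming $|\dr|=1$, the identities $\dot{h}=2\dr\cdot\dot{\dr}$, $\ddot{h}=2|\dot{\dr}|^2+2\dr\cdot\ddot{\dr}$, and $\dr\cdot\Delta\dr=\tfrac{1}{2}\Delta h-|\nabla\dr|^2$ still hold, while $\dr\cdot\B\dr=0$ is purely algebraic. Substituting the prescribed formula for $\gamma$ into the dotted equation and cancelling the terms that no longer depend on $h$ gives
\begin{equation*}
\rho_1\,\ddot{h}-\Delta h-\lambda_1\dot{h}-2\gamma\, h=0.
\end{equation*}
The initial conditions $|\dr^{in}|=1$ and $\tilde{\dr}^{in}\cdot\dr^{in}=0$ translate into $h|_{t=0}=0$ and $\dot{h}|_{t=0}=2\dr^{in}\cdot\tilde{\dr}^{in}=0$; combined with $\nabla h|_{t=0}\equiv 0$, this also yields $\partial_t h|_{t=0}=0$.

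The last step is to conclude $h\equiv 0$ from this linear Cauchy problem via a Gronwall-type energy argument. Multiplying the equation by $\dot{h}$ and integrating over $\R^n$, then using integration by parts together with $\div \v=0$ to dispose of the convection terms coming from the material derivatives, I expect to obtain a differential inequality of the form
\begin{equation*}
\tfrac{\d}{\d t}\bigl(\rho_1|\dot{h}|_{L^2}^2+|\nabla h|_{L^2}^2+|h|_{L^2}^2\bigr)\le C\bigl(|\dot{h}|_{L^2}^2+|\nabla h|_{L^2}^2+|h|_{L^2}^2\bigr),
\end{equation*}
where $C$ depends only on $|\nabla\v|_{L^\infty}$, $|\gamma|_{L^\infty}$ and $|\lambda_1|$; the complementary $|h|_{L^2}$-piece is obtained by pairing $h$ with the identity $\dot{h}=\partial_t h+\v\cdot\nabla h$ and again using incompressibility. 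Gronwall's lemma combined with the vanishing initial data then forces $h\equiv 0$ on $[0,T]\times\R^n$. The main technical point I anticipate is establishing the uniform $L^\infty$ bound on $\gamma=-\rho_1|\dot{\dr}|^2+|\nabla\dr|^2-\lambda_2\dr^\top\A\dr$; this is supplied by the Sobolev embedding $H^{s-1}(\R^n)\hookrightarrow L^\infty(\R^n)$ applied to $\dot{\dr}$, $\nabla\dr$ and $\A=\tfrac{1}{2}(\nabla\v+\nabla^\top\v)$, which is precisely the regularity floor that the hypothesis $s>\tfrac{n}{2}+1$ is chosen to guarantee.
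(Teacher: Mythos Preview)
Your proposal is correct and follows essentially the same route as the paper: dot the director equation with $\dr$ to extract $\gamma$ in the forward direction, then set $h=|\dr|^2-1$, derive the linear damped wave equation $\rho_1\ddot h-\lambda_1\dot h-\Delta h=2\gamma h$ with vanishing Cauchy data, and close by an $L^2$ energy estimate plus Gronwall. The only cosmetic difference is in the Gronwall bookkeeping: the paper works with $G(t)=\sqrt{\rho_1|\dot h|_{L^2}^2+|\nabla h|_{L^2}^2}+|h|_{L^2}$ and only requires $|\gamma(t,\cdot)|_{L^\infty},\,|\nabla\v(t,\cdot)|_{L^\infty}\in L^1(0,T)$ (which is why its Remark~4.1 can drop the incompressibility assumption), whereas you use the squared energy with a time-uniform constant; under the stated hypotheses both versions go through.
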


\begin{remark}
  In fact, if we assume that the liquid crystal flow with a given bulk velocity $\v \in L^1 (0,T; W^{1,\infty} (\R^n) )$ ( i.e. the form of the third equation of \eqref{PHLC} ) with the last two initial conditions of \eqref{Inital-Data} has a solution $\dr$ satisfying $\nabla \dr \in L^\infty(0,T;H^{s}(\R^n))$, $\dot{\dr} \in L^\infty(0,T;H^s(\R^n))$ and $|\dr|_{L^\infty([0,T] \times \R^n)} < \infty$ for some $T \in (0, \infty)$, then the conclusions of Lemma \ref{Parallel-d-Lemma} also hold.
\end{remark}

\begin{proof}
 We multiply $\dr$ in the third equation of the system \eqref{PHLC} and then we get
  \begin{align}\label{Lagrange-Multiplier-Calculate}
    \no \gamma |\dr|^2 =& \rho_1 \ddot{\dr} \cdot \dr - \Delta \dr \cdot \dr - \lambda_1 \dot{\dr} \cdot \dr + \lambda_1 \dr^\top \B \dr - \lambda_2 \dr^\top \A \dr \\
    \no =& \rho_1 (\partial_t + \v \cdot \nabla) \dot{\dr} \cdot \dr - \div \nabla \dr \cdot \dr - \lambda_1 (\partial_t + \v \cdot \nabla) \dr \cdot \dr - \lambda_2 \dr^\top \A \dr \\
    \no =& \rho_1 (\partial_t + \v \cdot \nabla) (\dot{\dr} \cdot \dr) - \rho_1 \dot{\dr} \cdot (\partial_t + \v \cdot \nabla) \dr - \div (\nabla \dr \cdot \dr) + |\nabla \dr|^2 \\
    \no & - \lambda_1 (\partial_t + \v \cdot \nabla) (\tfrac{1}{2} |\dr|^2) - \lambda_2 \dr^\top \A \dr \\
    \no =& \rho_1 (\partial_t + \v \cdot \nabla)^2 (\tfrac{1}{2} |\dr|^2) - \lambda_1 (\partial_t + \v \cdot \nabla) (\tfrac{1}{2} |\dr|^2) - \Delta (\tfrac{1}{2} |\dr|^2) \\
    \no &- \rho_1 |\dot{\dr}|^2 + |\nabla \dr|^2 - \lambda_2 \dr^\top \A \dr \, .
  \end{align}
  If $|\dr| =1$, the above equation reduces to
$$ \gamma = - \rho_1 |\dot{\dr}|^2 + |\nabla \dr|^2 - \lambda_2 \dr^\top \A \dr \, . $$

Conversely, if we give $ \gamma = - \rho_1 |\dot{\dr}|^2 + |\nabla \dr|^2 - \lambda_2 \dr^\top \A \dr $, from the above calculation and the initial conditions we have
\begin{equation}\label{|d|-Quasilinear-Hyperbolic-Equation}
\begin{aligned}
  \left\{ \begin{array}{l}
    \rho_1 (\partial_t + \v \cdot \nabla)^2 (  |\dr|^2 - 1 ) - \lambda_1 (\partial_t + \v \cdot \nabla) ( |\dr|^2 - 1 )  - \Delta ( |\dr|^2 - 1 ) = 2 \gamma (|\dr|^2 - 1) \, , \\
     (\partial_t + \v \cdot \nabla) (|\dr|^2-1) \big{|}_{t=0} = 2 {\tilde\dr}^{in} \cdot \dr^{in} = 0 \, ,\\
    (|\dr|^2 - 1 ) \big{|}_{t=0} = |\dr^{in}|^2 - 1 = 0 \, .
  \end{array} \right.
\end{aligned}
\end{equation}

Let $h= |\dr|^2 - 1$. Then $h$ solves the following Cauchy problem for a given smooth vector field $\v$:
\begin{equation}\label{h-Quasilinear-Hyperbolic-Equation}
\begin{aligned}
  \left\{ \begin{array}{l}
    \rho_1 \ddot{h} - \lambda_1 \dot{h}  - \Delta h = 2 \gamma h \, , \\
     \dot{h}|_{t=0} =  0 \, ,\\
    h|_{t=0} = 0 \, .
  \end{array} \right.
\end{aligned}
\end{equation}

 Our goal is to verify $h (t,x) = 0$ for all times $t$. Noticing that $|\dr|_{L^\infty([0,T] \times \R^n)} < \infty$ and $\v \in L^\infty(0,T;H^s(\R^n)) \cap L^2(0,T;H^{s+1}(\R^n))$ and $\dot{\dr}\, , \nabla \dr \in L^\infty(0,T;H^{s}(\R^n))$ for $s > \frac{n}{2} + 1$, we deduce that by Sobolve embedding $$|\v|_{L^1(0,T,L^\infty(\R^n))} + |\gamma|_{L^1(0,T,L^\infty(\R^n))} < \infty \, .$$

  We denote by $Z(t,x) \equiv \lambda_1 \dot{h} (t,x) + 2 \gamma (t, x) h(t,x)$. Multiplying by $\dot{h}$ in the equation \eqref{h-Quasilinear-Hyperbolic-Equation} and integrating by parts over $\R^n$, we have
  \begin{align}
    \no \tfrac{1}{2} \tfrac{\d }{\d t} ( \rho_1 |\dot{h}|_{L^2}^2 + |\nabla h|^2_{L^2} ) =&  \l Z , \dot{h} \r - \l \nabla h, \nabla \v \nabla h \r \leq |Z|_{L^2} |\dot{h}|_{L^2} + |\nabla \v|_{L^\infty} |\nabla h|^2_{L^2} \\
    \no \leq& \Big{(} \tfrac{1}{\sqrt{\rho_1}}  |Z|_{L^2} + |\nabla \v|_{L^\infty} |\nabla h|_{L^2}  \Big{)} \sqrt{ \rho_1 |\dot{h}|_{L^2}^2 + |\nabla h|^2_{L^2} }\, ,
  \end{align}
  which implies that
  $$ \tfrac{\d}{\d t} \sqrt{ \rho_1 |\dot{h}|_{L^2}^2 + |\nabla h|^2_{L^2} } \leq \tfrac{1}{\sqrt{\rho_1}} |Z|_{L^2} + |\nabla \v|_{L^\infty} |\nabla h|_{L^2} \, . $$
Then by integrating on $[0,t]$ we have
\begin{equation}\label{G-1}
  \begin{aligned}
    & \sqrt{ \rho_1 |\dot{h} (t, \cdot)|_{L^2}^2 + |\nabla h (t, \cdot)|^2_{L^2} } \leq  \sqrt{ \rho_1 |\dot{h} (0, \cdot)|_{L^2}^2 + |\nabla h (0, \cdot)|^2_{L^2} }  \\
    & + \tfrac{1}{\sqrt{\rho_1}} \int_0^t |Z(\tau, \cdot)|_{L^2} \d \tau + \int_0^t |\nabla \v (\tau, \cdot)|_{L^\infty} |\nabla h (\tau, \cdot)|_{L^2} \d \tau \, .
  \end{aligned}
\end{equation}

Let $G(t) \equiv \sqrt{ \rho_1 |\dot{h} (t, \cdot)|_{L^2}^2 + |\nabla h (t, \cdot)|^2_{L^2} } + |h(t, \cdot)|_{L^2} $. One notices that
\begin{align}\label{G-2}
  \no |Z(t, \cdot)|_{L^2} \leq& \Big{(} \tfrac{|\lambda_1|}{\sqrt{\rho_1}} + 2 |\gamma(t, \cdot)|_{L^\infty} \Big{)} \Big{(} \sqrt{\rho_1} |\dot{h} (t, \cdot)|_{L^2} + |h (t, \cdot)|_{L^2} \Big{)} \\
  \leq& \Big{(} \tfrac{|\lambda_1|}{\sqrt{\rho_1}} + 2 |\gamma(t, \cdot)|_{L^\infty} \Big{)} G(t) \, ,
\end{align}
and by the relation $\partial_t h = \dot{h} - \v \cdot \nabla h$
\begin{align}\label{G-3}
  \no |h (t, \cdot)|_{L^2} \leq & |h (0, \cdot)|_{L^2} + \int_0^t |\partial_t h(\tau, \cdot)|_{L^2} \d \tau \\
  \no \leq & |h (0, \cdot)|_{L^2} + \int_0^t ( |\dot{ h} (\tau, \cdot)|_{L^2} + |\v(\tau, \cdot)|_{L^\infty(\R^n)} |\nabla h (\tau, \cdot)|_{L^2} ) \d \tau \\
  \leq& |h(0, \cdot)|_{L^2} +  \int_0^t \big{(} \frac{1}{\sqrt{\rho_1}} + |\v(\tau, \cdot)|_{L^\infty(\R^n)} \big{)} G(\tau) \d \tau \, .
\end{align}
According  to the inequalities \eqref{G-1}, \eqref{G-2} and \eqref{G-3}, we observe that for all $0 \leq t \leq T$
$$ G(t) \leq G(0) + \int_0^t R(\tau) G(\tau) \d \tau \, , $$
where $R(t) = \frac{\sqrt{\rho_1}+|\lambda_1|  }{\rho_1} + \frac{ 2} {\sqrt{\rho_1}} |\gamma(t, \cdot)|_{L^\infty} + |\nabla \v (t, \cdot)|_{L^\infty}+  |\v(t, \cdot)|_{L^\infty} \in L^1 ([0,T])$. Then it is derived from Gronwall inequality and the fact $G(0) = 0$ that
$$ 0 \leq G(t) \leq G(0) \exp \left( \int_0^t R(\tau) \d \tau \right) = 0  $$
holds for all $t \in [0,T]$. Consequently, $h(t,x) = 0$ holds for all times $t$ and then the proof of Lemma \ref{Parallel-d-Lemma} is finished.

\end{proof}

\section{Well-posedness for a given velocity field}\label{Sec-LWP-u-Given}

In this section, we aim mainly at justifying the well-posedness of the following wave map type system with a given velocity field $\v(t,x)\in \R^n$:
\begin{equation}\label{WM}
  \left\{
    \begin{array}{c}
      \rho_1 \ddot{\dr} = \Delta \dr + \gamma(\v, \dr, \dot{\dr}) \dr + \lambda_1 (\dot{\dr} - \B \dr) + \lambda_2 \A \dr \, ,\\
      \dr \in \mathbb{S}^{n-1} \, ,
    \end{array}
  \right.
\end{equation}
with the initial conditions
\begin{equation}\label{IC-WM}
  \dr (0,x) = \dr^{in} (x) \,, \ \dot{\dr} (0,x) = \tilde{\dr}^{in} (x)\,,
\end{equation}
where the symbol $\dot{\dr} = \partial_t \dr + \v \cdot \nabla \dr$ is the first order material derivative of the vector field $\dr$ with respect to the bulk velocity $\v$, and the Lagrangian multiplier $\gamma (\v, \dr, \dot{\dr})$ is of the form of \eqref{Lagrange-Multiplier}, and the initial data satisfy the compatibility
$$ |\dr^{in}|=1 \,, \ \dr^{in} \cdot \tilde{\dr}^{in} = 0 \,. $$

More precisely, the results of well-posedness of the system \eqref{WM}-\eqref{IC-WM}, which in fact will be used in constructing the iterating approximate system of the system \eqref{PHLC}-\eqref{Inital-Data}, are stated as follows:

\begin{proposition}\label{Prop-WellPosed-WM}
  For $s > \frac{n}{2} + 1$ and $T_0 > 0$, let vector fields $(\dr^{in}, \tilde{\dr}^{in}, \v ) \in \mathbb{S}^{n-1} \times \R^n \times \R^n$ satisfy $\nabla \dr^{in} \in H^s$ , $\tilde{\dr}^{in} \in H^s$ and $\v \in  L^1 (0,T_0; H^{s+1})$. Then there exists a number $ 0 < T  \leq T_0$, depending only on $ \dr^{in}$, $\tilde{\dr}^{in}$ and $\v $, such that the system \eqref{WM}-\eqref{IC-WM} has a unique classical solution $\dr$ satisfying $\nabla \dr \,, \ \dot{\dr} \in C (0,T; H^s)$. Moreover, there is a positive constant $C_2$, depending only on $ \dr^{in}$, $\tilde{\dr}^{in}$ and $\v $, such that the solution $\dr$ satisfies the following bound
  \begin{equation*}
   \rho_1 |\dot{\dr}|^2_{L^\infty(0,T; H^s)} + |\nabla \dr|^2_{L^\infty(0,T;H^s)} \leq C_2 \,.
  \end{equation*}
\end{proposition}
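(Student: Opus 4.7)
The plan is to build the solution by a Picard-type iteration on the linearized equation, derive a uniform $H^s$-energy bound on a short time interval, and pass to the limit. Since $\gamma$ has the form \eqref{Lagrange-Multiplier1}, the remark following Lemma~\ref{Parallel-d-Lemma} guarantees that the geometric constraint $|\dr|=1$ is propagated from the initial data, and therefore need not be enforced during the iteration itself.

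First I would set $\dr^{(0)}(t,x) := \dr^{in}(x)$ and, given $\dr^{(k)}$, define $\dr^{(k+1)}$ as the solution of the linear second-order hyperbolic equation
\begin{equation*}
\rho_1 (\partial_t + \v\cdot\nabla)^2 \dr^{(k+1)} - \lambda_1 (\partial_t + \v\cdot\nabla) \dr^{(k+1)} - \Delta \dr^{(k+1)} = \gamma^{(k)} \dr^{(k)} + \lambda_1 \B \dr^{(k)} + \lambda_2 \A \dr^{(k)},
\end{equation*}
with initial data $\dr^{(k+1)}|_{t=0} = \dr^{in}$ and $(\partial_t + \v\cdot\nabla) \dr^{(k+1)}|_{t=0} = \tilde\dr^{in}$, where $\gamma^{(k)}$ denotes \eqref{Lagrange-Multiplier1} evaluated at $(\v, \dr^{(k)}, \dot\dr^{(k)})$. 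For fixed $\v$ this is a linear wave-type equation whose local $H^s$-solvability is classical (a mollifier regularization of $\v$ in time settles any issue with the $L^1_t H^{s+1}_x$ regularity of the coefficients).

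With $E^{(k)}(t) := \rho_1 |\dot\dr^{(k)}|_{H^s}^2 + |\nabla \dr^{(k)}|_{H^s}^2$, repeating for this linear problem the higher-order computation proving Lemma~\ref{Lm-APE} (the principal-order quadratic form in $\nabla^s \dr^{(k+1)}$ and $\nabla^s \dot\dr^{(k+1)}$ cancels by integration by parts exactly as in the basic energy law) yields a differential inequality of the schematic form
\begin{equation*}
\tfrac{\d}{\d t} E^{(k+1)}(t) \leq C \bigl( 1 + |\v(t)|_{H^{s+1}} \bigr) \bigl( 1 + E^{(k)}(t) \bigr)^N \bigl( 1 + E^{(k+1)}(t) \bigr),
\end{equation*}
for a fixed integer $N$ and a constant $C$ depending only on the Leslie coefficients. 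Using $\v \in L^1(0,T_0; H^{s+1})$, a standard bootstrap produces $T \in (0,T_0]$ and $C_2 > 0$ with $\sup_k \sup_{0\leq t\leq T} E^{(k)}(t) \leq C_2$. To extract a limit I would then subtract the equations for $\dr^{(k+1)}$ and $\dr^{(k)}$ and perform the analogous energy estimate for the difference at one derivative order less, which is safe because the nonlinearities are polynomial and the iterates are uniformly bounded in $H^s$; after possibly shrinking $T$ once more this gives a contraction in $C([0,T]; H^{s-1})$. Interpolating this with the uniform $H^s$-bound upgrades it to strong convergence in $C([0,T]; H^{s'})$ for any $s' < s$, enough to pass to the limit in every nonlinear term. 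Uniqueness follows from the same difference argument, and the remark after Lemma~\ref{Parallel-d-Lemma} forces $|\dr|=1$.

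The main obstacle is the top-order treatment of the nonlinearities $\gamma^{(k)} \dr^{(k)}$ and $\lambda_2 \A \dr^{(k)}$: on acting with $\nabla^s$, the factors $|\dot\dr^{(k)}|_{H^s}$ and $|\nabla \v|_{H^s}$ appear simultaneously at the energy scale, and the $\v$-contribution can be absorbed only by reserving its top derivative for time integration, which is exactly why the hypothesis is $\v \in L^1(0,T_0; H^{s+1})$ rather than $L^\infty$ in time. A secondary technical issue is the commutator $[\nabla^\alpha, \v\cdot\nabla]$ arising from the material-derivative structure, which is handled by standard Kato--Ponce inequalities and is controlled by $|\nabla\v|_{L^\infty} \lesssim |\v|_{H^s}$, and so does not threaten the final Gronwall step.
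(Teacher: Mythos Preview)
Your approach differs from the paper's: the authors do not linearize and iterate, but instead apply a Fourier cut-off mollifier $\mathcal{J}_\eps$ directly to the full nonlinear equation, solve the resulting ODE in $H^s$, and pass to the limit by compactness. Your Picard scheme is a legitimate alternative, but there is a real gap in the energy argument as you have written it.

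The iterates $\dr^{(k)}$ do \emph{not} satisfy $|\dr^{(k)}|=1$ (the source term involves $\dr^{(k-1)}$, so Lemma~\ref{Parallel-d-Lemma} does not apply at each step), yet your energy $E^{(k)}=\rho_1|\dot\dr^{(k)}|_{H^s}^2+|\nabla\dr^{(k)}|_{H^s}^2$ gives no control on $|\dr^{(k)}|_{L^\infty}$. This norm enters unavoidably: the source $\gamma^{(k)}\dr^{(k)}$ contains, for instance, $-\lambda_2(\dr^{(k)\top}\A\dr^{(k)})\dr^{(k)}$, which is cubic in $\dr^{(k)}$, and even the linear-looking pieces $\lambda_1\B\dr^{(k)}$, $\lambda_2\A\dr^{(k)}$ require it at top order. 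Invoking the computation of Lemma~\ref{Lm-APE} does not help, since that lemma repeatedly uses $|\dr|=1$ to drop exactly these factors. Consequently your differential inequality for $E^{(k+1)}$ does not close as stated.

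The paper resolves this (for its mollified solutions $\dr^\eps$, which likewise fail to lie on the sphere) by enlarging the energy to $E_\eps = \rho_1|\dot\dr^\eps|_{H^s}^2 + |\nabla\dr^\eps|_{H^s}^2 + |\dr^\eps-\mathcal{J}_\eps\dr^{in}|_{L^2}^2$; the extra $L^2$-piece, combined with $|\dr^{in}|_{L^\infty}=1$ and Sobolev embedding, bounds $|\dr^\eps|_{L^\infty}$ and yields a closed inequality $\tfrac{\d}{\d t}E_\eps \le C(1+|\v|_{H^{s+1}})(1+E_\eps)^2$. The same fix would rescue your iteration: add $|\dr^{(k)}-\dr^{in}|_{L^2}^2$ to $E^{(k)}$ and propagate it via $\partial_t(\dr^{(k)}-\dr^{in})=\dot\dr^{(k)}-\v\cdot\nabla\dr^{(k)}$.
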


\begin{proof}
  We justify this proposition by dividing into three steps. We first construct an approximate system of the system \eqref{WM}-\eqref{IC-WM} by standard mollifier methods. Second, we derive a uniform energy bounds. In the end, we take limit in the constructed approximate system by using the results of compactness.

  {\em Step 1. Construct the approximate system.} We first define the mollifier operator $\mathcal{J}_\eps$ as
  \begin{equation*}
    \mathcal{J}_\eps f = \mathcal{F}^{-1} \big{(} \mathbf{1}_{|\xi| \leq \frac{1}{\eps}} \mathcal{F} (f) \big{)} \,,
  \end{equation*}
  where the operator $\mathcal{F}$ is the standard Fourier transform and $\mathcal{F}^{-1}$ is the inverse Fourier transform. It is easy to see that the mollifier operator $\mathcal{J}_\eps$ has the property $\mathcal{J}_\eps^2 = \mathcal{J}_\eps$. We construct the following approximate system of \eqref{WM}-\eqref{IC-WM}
  \begin{equation}\label{Approx-Syst-WM-1}
    \left\{
      \begin{array}{l}
        \rho_1 \partial_t \dot{\dr}^\eps = - \rho_1 \mathcal{J}_\eps ( \v \cdot \nabla \mathcal{J}_\eps \dot{\dr}^\eps ) + \Delta \mathcal{J}_\eps \dr^\eps + \mathcal{J}_\eps \big{(} \gamma ( \v, \mathcal{J}_\eps \dr^\eps, \mathcal{J}_\eps \dot{\dr}^\eps ) \mathcal{J}_\eps \dr^\eps  \big{)} \\
        \qquad\qquad\qquad + \lambda_1 \mathcal{J}_\eps \dot{\dr}^\eps - \lambda_1 \mathcal{J}_\eps ( \B \mathcal{J}_\eps \dr^\eps ) + \lambda_2 \mathcal{J}_\eps ( \A \mathcal{J}_\eps \dr^\eps ) \,, \\
        \partial_t \dr^\eps = \dot{\dr}^\eps - \mathcal{J}_\eps ( \v \cdot \nabla \mathcal{J}_\eps \dr^\eps )\,, \\
        ( \dr^\eps, \dot{\dr}^\eps ) |_{t=0} = ( \mathcal{J}_\eps \dr_0, \mathcal{J}_\eps \tilde{\dr}_0 )\,.
      \end{array}
    \right.
  \end{equation}
  By ODE theory, we can prove that there is a maximal time interval $T_\eps > 0$, depending only on $\dr_0$, $\tilde{\dr}_0$, $\v$ and $T_0$, such that the approximate system \eqref{Approx-Syst-WM-1} admits a unique solution $\dr^\eps \in C([0,T_\eps);H^{s+1})$ and $\dot{\dr}^\eps \in C([0,T_\eps);H^{s})$. We figure out that $T_\eps \leq T_0$ for all $\eps > 0$, which is determined by the regularity of $\v$. From the fact $\mathcal{J}_\eps^2 = \mathcal{J}_\eps$, we observe that $(\mathcal{J}_\eps \dr^\eps, \mathcal{J}_\eps \dot{\dr}^\eps)$ is also a solution to the approximate system \eqref{Approx-Syst-WM-1}. Thus the uniqueness immediately implies the relation
  \begin{equation}\label{Mollify-Indent}
    (\mathcal{J}_\eps \dr^\eps, \mathcal{J}_\eps \dot{\dr}^\eps) = ( \dr^\eps, \dot{\dr}^\eps) \,.
  \end{equation}
  Therefore, from the relation \eqref{Mollify-Indent}, the solution $( \dr^\eps, \dot{\dr}^\eps)$ also solves the following system
  \begin{equation}\label{Approx-Syst-WM-2}
    \left\{
      \begin{array}{l}
        \rho_1 \partial_t \dot{\dr}^\eps = - \rho_1 \mathcal{J}_\eps ( \v \nabla \cdot \dot{\dr}^\eps ) + \Delta  \dr^\eps + \mathcal{J}_\eps \big{(} \gamma ( \v, \dr^\eps,  \dot{\dr}^\eps )  \dr^\eps  \big{)} \\
        \qquad\qquad\qquad + \lambda_1 \dot{\dr}^\eps - \lambda_1 \mathcal{J}_\eps ( \B \dr^\eps ) + \lambda_2 \mathcal{J}_\eps ( \A  \dr^\eps ) \,, \\
        \partial_t \dr^\eps = \dot{\dr}^\eps - \mathcal{J}_\eps ( \v \cdot \nabla \dr^\eps )\,, \\
        ( \dr^\eps, \dot{\dr}^\eps ) |_{t=0} = ( \mathcal{J}_\eps \dr_0, \mathcal{J}_\eps \tilde{\dr}_0 )\,.
      \end{array}
    \right.
  \end{equation}

  {\em Step 2. Uniform energy estimate.} We employ the standard energy estimate arguments to derive the uniform energy bounds, where the fact \eqref{Mollify-Indent}, the H\"older inequality and Sobolev embedding theory are frequently used.

  First, we calculate the $L^2$-estimate of the approximate system \eqref{Approx-Syst-WM-2}. Multiplying by $\dot{\dr}^\eps$ in the first equation of \eqref{Approx-Syst-WM-2} and integrating by parts on $\R^n$, we have
  \begin{equation}\label{L2-1}
    \begin{aligned}
      \tfrac{1}{2} \tfrac{\d}{\d t} \Big{(} \rho_1 |\dot{\dr}^\eps|^2_{L^2} & + |\nabla^{k+1} \dr^\eps|^2_{L^2} \Big{)} =  - \l \rho_1 \v \cdot \nabla \dot{\dr}^\eps, \dot{\dr}^\eps \r + \l \Delta \dr^\eps, \v \cdot \nabla \dr^\eps \r + \lambda_1 |\dot{\dr}^\eps|^2_{L^2} \\
      & +  \l \gamma (\v, \dr^\eps, \dot{\dr}^\eps) \dr^\eps, \dot{\dr}^\eps \r - \lambda_1 \l \B \dr^\eps, \dot{\dr}^\eps \r + \lambda_2 \l \A \dr^\eps, \dot{\dr}^\eps \r \,.
    \end{aligned}
  \end{equation}
  It is derived from the H\"older inequality and integration by parts on $\R^n$ that
  \begin{equation*}
    \begin{aligned}
      - \l \rho_1 \v \cdot \nabla \dot{\dr}^\eps, \dot{\dr}^\eps \r + \l \Delta \dr^\eps, \v \cdot \nabla \dr^\eps \r = & \l \tfrac{1}{2} \div \v, \rho_1 |\dot{\dr}^\eps|^2 - |\nabla \dr^\eps|^2 \r + \l \nabla \dr^\eps, \nabla \v \cdot \nabla \dr^\eps \r \\
      \lesssim & |\nabla \v|_{L^\infty} \big{(} \rho_1 |\dot{\dr}^\eps|^2_{L^2} + |\nabla \dr^\eps|^2_{L^2} \big{)} \,,
    \end{aligned}
  \end{equation*}
  and
  \begin{equation*}
    \begin{aligned}
      \l \gamma (\v, \dr^\eps, \dot{\dr}^\eps) \dr^\eps, \dot{\dr}^\eps \r = & \l ( - \rho_1 |\dot{\dr}^\eps|^2 + |\nabla \dr^\eps|^2 - \lambda_2 \dr^\eps{}\top \A \dr^\eps ) \dr^\eps, \dot{\dr}^\eps \r \\
      \lesssim & |\dr^\eps|_{L^\infty} |\dot{\dr}^\eps|_{L^\infty} \big{(} \rho_1 |\dot{\dr}^\eps|^2_{L^2} + |\nabla \dr^\eps|^2_{L^2} \big{)} + |\lambda_2| |\nabla \v|_{L^2} |\dr^\eps|^3_{L^\infty} |\dot{\dr}^\eps|_{L^2} \,,
    \end{aligned}
  \end{equation*}
  and
  \begin{equation*}
    \begin{aligned}
      - \lambda_1 \l \B \dr^\eps, \dot{\dr}^\eps \r + \lambda_2 \l \A \dr^\eps, \dot{\dr}^\eps \r \lesssim ( |\lambda_1| + |\lambda_2| ) |\nabla \v|_{L^2} |\dr^\eps|_{L^\infty} |\dot{\dr}^\eps|_{L^2}\,.
    \end{aligned}
  \end{equation*}
  Then, by plugging the above three inequalities into the equality \eqref{L2-1} we have the $L^2$-estimate
  \begin{equation}\label{L2}
    \begin{aligned}
      & \tfrac{1}{2} \tfrac{\d}{\d t} \Big{(} \rho_1 |\dot{\dr}^\eps|^2_{L^2} + |\nabla \dr^\eps|^2_{L^2} \Big{)} \lesssim  ( |\nabla \v|_{L^\infty} + |\lambda_1| ) \big{(} \rho_1 |\dot{\dr}^\eps|^2_{L^2} + |\nabla \dr^\eps|^2_{L^2} \big{)}  + |\lambda_2| |\nabla \v|_{L^2} |\dr^\eps|^3_{L^\infty} |\dot{\dr}^\eps|_{L^2} \\
      &+  |\dr^\eps|_{L^\infty} |\dot{\dr}^\eps|_{L^\infty} \big{(} \rho_1 |\dot{\dr}^\eps|^2_{L^2} + |\nabla \dr^\eps|^2_{L^2} \big{)} + ( |\lambda_1| + |\lambda_2| ) |\nabla \v|_{L^2} |\dr^\eps|_{L^\infty} |\dot{\dr}^\eps|_{L^2} \,.
    \end{aligned}
  \end{equation}

  Second, we estimate the higher order energy bounds of the approximate system \eqref{Approx-Syst-WM-2}. For all $1 \leq k \leq s$, we act the $k$-order derivative operator $\nabla^k$ on the first equation of the system \eqref{Approx-Syst-WM-2} and take $L^2$-inner product by multiplying $\nabla^k \dot{\dr}^\eps$, then by integrating by parts we obtain
  \begin{equation}\label{Hk-1}
    \begin{aligned}
      \tfrac{1}{2} \tfrac{\d}{\d t} & \Big{(} \rho_1 |\nabla^k \dot{\dr}^\eps|^2_{L^2} + |\nabla \dr^\eps|^2_{L^2} \Big{)} = \lambda_1 |\nabla^k \dot{\dr}^\eps|^2_{L^2} - \rho_1 \l \nabla^k (\v \cdot \nabla \dot{\dr}^\eps), \nabla^k \dot{\dr}^\eps \r  \\
      &  + \l \nabla^k ( \gamma (\v, \dr^\eps, \dot{\dr}^\eps) \dr^\eps ) , \nabla^k \dot{\dr}^\eps \r + \l \Delta \nabla^k \dr^\eps, \nabla^k ( \v \cdot \nabla \dr^\eps ) \r \\
      &- \lambda_1 \l \nabla^k (\B \dr^\eps), \nabla^k \dot{\dr}^\eps \r + \lambda_2 \l \nabla^k (\A \dr^\eps), \nabla^k \dot{\dr}^\eps \r \,.
    \end{aligned}
  \end{equation}
  Now we estimate the right hand side of the equality \eqref{Hk-1} term by term. It is implied by integrating by parts, the H\"older inequality and Sobolev embedding theory that
  \begin{equation}\label{Hk-2}
    \begin{aligned}
      & - \rho_1 \l \nabla^k (\v \cdot \nabla \dot{\dr}^\eps), \nabla^k \dot{\dr}^\eps \r =  - \rho_1 \sum_{\substack{a+b=k \\ a \geq 2}} \l \nabla^a \v \nabla^{b+1} \dot{\dr}^\eps , \nabla^k \dot{\dr}^\eps \r  \\
      & - \rho_1 \l \v \cdot \nabla \nabla^k \dot{\dr}^\eps , \nabla^k \dot{\dr}^\eps \r  - \rho_1 \l \nabla \v \nabla^k \dot{\dr}^\eps, \nabla^k \dot{\dr}^\eps \r \\
      \lesssim & \rho_1 |\nabla \v|_{L^\infty} |\nabla^k \dot{\dr}^\eps|^2_{L^2} + \rho_1 \sum_{\substack{a+b=k \\ a \geq 2}} |\nabla^a \v|_{L^4} |\nabla^{b+1} \dot{\dr}^\eps|_{L^4} |\nabla^k \dot{\dr}^\eps|_{L^2} \lesssim  \rho_1 |\nabla \v|_{H^s} |\dot{\dr}^\eps|^2_{H^s} \,,
    \end{aligned}
  \end{equation}
  where the first inequality is derived from the fact $ \langle \v \cdot \nabla \nabla^k \dot{\dr}^\eps , \nabla^k \dot{\dr}^\eps \rangle = \langle \frac{1}{2} \div \v, |\nabla^k \dot{\dr}^\eps|^2 \rangle $, and similar calculation in \eqref{Hk-2} reduces to
  \begin{equation}\label{Hk-3}
    \begin{aligned}
      \l \Delta \nabla^k \dr^\eps, \nabla^k ( \v \cdot \nabla \dr^\eps ) \r =  - \l \nabla^{k+1} (\v \cdot \nabla \dr^\eps), \nabla^{k+1} \dr^\eps \r \lesssim  |\nabla \v|_{H^s} |\nabla^{k+1} \dr^\eps|^2_{H^s} \,,
    \end{aligned}
  \end{equation}
  and
  \begin{equation}\label{Hk-4}
    \begin{aligned}
       & - \lambda_1 \l \nabla^k (\B \dr^\eps), \nabla^k \dot{\dr}^\eps \r  + \lambda_2 \l \nabla^k (\A \dr^\eps), \nabla^k \dot{\dr}^\eps \r \\
      \lesssim & ( |\lambda_1| + |\lambda_2| ) \Big( |\nabla^{k+1} \v|_{L^2} |\dr^\eps|_{L^\infty}   +  \sum_{ \substack{a+b=k \\ b \geq 1}} |\nabla^{a+1} \v|_{L^4} |\nabla^b \dr^\eps|_{L^4} \Big) |\nabla^k \dot{\dr}^\eps|_{L^2} \\
      \lesssim & ( |\lambda_1| + |\lambda_2| ) |\nabla \v|_{H^s} \big{(} |\dr^\eps|_{L^\infty} + |\nabla \dr^\eps|_{H^s} \big{)} |\dot{\dr}^\eps|_{H^s}\,.
    \end{aligned}
  \end{equation}

  It remains to estimate the term $\l \nabla^k ( \gamma (\v, \dr^\eps, \dot{\dr}^\eps) \dr^\eps ) , \nabla^k \dot{\dr}^\eps \r$, which can be divided into three parts. We can directly calculate the following term
    \begin{align}\label{Hk-5}
      \no & - \rho_1 \l \nabla^k (|\dot{\dr}^\eps|^2 \dr^\eps), \nabla \dot{\dr}^\eps \r \\
      \no \lesssim & \rho_1 |\dr^\eps|_{L^\infty} \sum_{a+b=k} \l |\nabla^a \dot{\dr}^\eps|\, |\nabla^b \dot{\dr}^\eps|, |\nabla^k \dot{\dr}^\eps| \r + \rho_1 \sum_{\substack{a+b+c=k \\ c \geq 1}} \l |\nabla^a \dot{\dr}^\eps|\, |\nabla^b \dot{\dr}^\eps|\, |\nabla^c \dr^\eps|, |\nabla^k \dot{\dr}^\eps| \r \\
     \lesssim & 2 \rho_1 |\dr^\eps|_{L^\infty} |\dot{\dr}^\eps|_{L^\infty} |\nabla^k \dot{\dr}^\eps|^2_{L^2} + \rho_1 |\dr^\eps|_{L^\infty} \sum_{\substack{a+b=k \\ a,b \geq 1}} |\nabla^a \dot{\dr}^\eps|_{L^4} |\nabla^b \dot{\dr}^\eps|_{L^4} |\nabla^k \dot{\dr}^\eps|_{L^2} \\
      \no & + \rho_1 |\dot{\dr}^\eps|_{L^\infty} |\dot{\dr}^\eps|_{L^4} |\nabla^k \dr^\eps|_{L^4} |\nabla^k \dot{\dr}^\eps|_{L^2} + \rho_1 \sum_{\substack{a+b+c=k \\ 1 \leq c \leq k-1}} |\nabla^c \dr^\eps|_{L^\infty} |\nabla^a \dot{\dr}^\eps|_{L^4} |\nabla^b \dot{\dr}^\eps|_{L^4} |\nabla^k \dot{\dr}^\eps|_{L^2} \\
      \no \lesssim & \rho_1 \big{(} |\dr^\eps|_{L^\infty} + |\nabla \dr^\eps|_{H^s} \big{)} |\dot{\dr}^\eps|^3_{H^s}\,,
    \end{align}
  where the last two inequalities are implied by the Sobolev embedding theory and H\"older inequality. According to the same calculation in \eqref{Hk-5}, we can estimate
  \begin{equation}\label{Hk-6}
    \l \nabla^k ( |\nabla \dr^\eps|^2 \dr^\eps ) , \nabla^k \dot{\dr}^\eps \r \lesssim  \big{(} |\dr^\eps|_{L^\infty} + |\nabla \dr^\eps|_{H^s} \big{)} |\nabla \dr^\eps|^2_{H^s} |\dot{\dr}^\eps|_{H^s} \,.
  \end{equation}
  Now we estimate the last term of $\l \nabla^k ( \gamma (\v, \dr^\eps, \dot{\dr}^\eps) \dr^\eps ) , \nabla^k \dot{\dr}^\eps \r$ by the H\"older inequality and Sobolev embedding. More precisely, we calculate
  \begin{equation}\label{Hk-7}
    \begin{aligned}
      & - \lambda_2  \l \nabla^k ( (\dr^\eps{}^\top \A \dr^\eps) \dr^\eps ), \nabla^k \dot{\dr}^\eps \r \\
      \lesssim & |\lambda_2| \sum_{a+b+c+e=k} \l |\nabla^a \dr^\eps|\, |\nabla^b \dr^\eps|\, |\nabla^c \dr^\eps|\, |\nabla^{e+1} \v| , |\nabla^k \dot{\dr}^\eps| \r \\
      \lesssim & |\lambda_2| |\dr^\eps|^3_{L^\infty} |\nabla^{k+1} \v|_{L^2} |\nabla^k \dot{\dr}^\eps|_{L^2} + 3 |\lambda_2| |\dr^\eps|^2_{L^\infty} \sum_{\substack{a+e=k \\ a \geq 1}} \l |\nabla^a \dr^\eps|\, |\nabla^{e+1} \v|, |\nabla^k \dot{\dr}^\eps| \r \\
      & + 3 |\lambda_2| |\dr^\eps|_{L^\infty} \sum_{\substack{a+b+e=k \\ a,b \geq 1}} \l |\nabla^a \dr^\eps|\, |\nabla^b \dr^\eps|\, |\nabla^{e+1} \v|, |\nabla^k \dot{\dr}^\eps| \r \\
      & + |\lambda_2|  \sum_{\substack{a+b+c+e=k \\ a,b,c \geq 1}} \l |\nabla^a \dr^\eps|\, |\nabla^b \dr^\eps|\, |\nabla^c \dr^\eps|\, |\nabla^{e+1} \v|, |\nabla^k \dot{\dr}^\eps| \r \\
      \lesssim & |\lambda_2| |\nabla \v |_{H^s} \big{(} |\dr^\eps|_{L^\infty} + |\nabla \dr^\eps|_{H^s} \big{)}^3 |\dot{\dr}^\eps|_{H^s}\,.
    \end{aligned}
  \end{equation}
  Therefore, by substituting the inequalities \eqref{Hk-2}, \eqref{Hk-3}, \eqref{Hk-4}, \eqref{Hk-5}, \eqref{Hk-6} and \eqref{Hk-7} into the equality \eqref{Hk-1}, we know that the inequality
  \begin{equation}\label{Hk}
    \begin{aligned}
      & \tfrac{1}{2} \tfrac{\d}{\d t} \Big{(} \rho_1 |\nabla^k \dot{\dr}^\eps|^2_{L^2} + |\nabla^{k+1} \dr^\eps|^2_{L^2} \Big{)} \lesssim |\lambda_1| | \nabla^k \dot{\dr}^\eps |^2_{L^2} + |\nabla \v|_{H^s} \big{(} \rho_1 |\dot{\dr}^\eps|^2_{H^s} + |\nabla \dr^\eps|^2_{H^s} \big{)} \\
      & + ( |\lambda_1| + |\lambda_2| ) |\nabla \v|_{H^s} \big{(} |\dr^\eps|_{L^\infty} + |\nabla \dr^\eps|_{H^s} \big{)} |\dot{\dr}^\eps|_{H^s} \\
      & + \big{(} |\dr^\eps|_{L^\infty} + |\nabla \dr^\eps|_{H^s} \big{)} \big{(} \rho_1 |\dot{\dr}^\eps|^2_{H^s} + |\nabla \dr^\eps|^2_{H^s} \big{)} |\dot{\dr}^\eps|_{H^s}  + |\lambda_2| |\nabla \v|_{H^s} \big{(} |\dr^\eps|_{L^\infty} + |\nabla \dr^\eps|_{H^s} \big{)}^3 |\dot{\dr}^\eps|_{H^s}
    \end{aligned}
  \end{equation}
  holds for all $1 \leq k \leq s$. Combining the $L^2$-estimate \eqref{L2} and higher order derivative estimate \eqref{Hk}, we have
  \begin{equation}\label{Hs}
    \begin{aligned}
      & \tfrac{1}{2} \tfrac{\d}{\d t} \Big{(} \rho_1 |\dot{\dr}^\eps|^2_{H^s} + |\nabla \dr^\eps|^2_{H^s} \Big{)} \lesssim |\lambda_1| | \nabla^k \dot{\dr}^\eps |^2_{L^2} + |\nabla \v|_{H^s} \big{(} \rho_1 |\dot{\dr}^\eps|^2_{H^s} + |\nabla \dr^\eps|^2_{H^s} \big{)} \\
      & + ( |\lambda_1| + |\lambda_2| ) |\nabla \v|_{H^s} \big{(} |\dr^\eps|_{L^\infty} + |\nabla \dr^\eps|_{H^s} \big{)} |\dot{\dr}^\eps|_{H^s} + |\lambda_2| |\nabla \v|_{H^s} \big{(} |\dr^\eps|_{L^\infty} + |\nabla \dr^\eps|_{H^s} \big{)}^3 |\dot{\dr}^\eps|_{H^s}\ \\
      & + \big{(} |\dr^\eps|_{L^\infty} + |\nabla \dr^\eps|_{H^s} \big{)} \big{(} \rho_1 |\dot{\dr}^\eps|^2_{H^s} + |\nabla \dr^\eps|^2_{H^s} \big{)} |\dot{\dr}^\eps|_{H^s} \,.
    \end{aligned}
  \end{equation}

  Third, we notice that the norm $|\dr^\eps|_{L^\infty}$ in the above $H^s$-energy estimate \eqref{Hs} is uncontrolled by now, so that we should try to control it. We observe that
  \begin{equation}\label{L-infty}
    \begin{aligned}
      |\dr^\eps|_{L^\infty} \lesssim & |\dr^\eps - \mathcal{J}_\eps \dr_0 |_{L^\infty} + |\mathcal{J}_\eps \dr_0|_{L^\infty} \lesssim  |\dr^\eps - \mathcal{J}_\eps \dr_0 |_{H^2} + 1 \\
      \lesssim & |\dr^\eps - \mathcal{J}_\eps \dr_0 |_{L^2} + |\nabla \dr^\eps|_{H^1} + |\nabla \dr_0|_{H^1} + 1 \,,
    \end{aligned}
  \end{equation}
  where the second inequality is derived from the Sobolev embedding theory. Thanks to the relation $\partial_t \dr^\eps = \dot{\dr}^\eps - \mathcal{J}_\eps (\v \cdot \nabla \dr^\eps)$ and the Sobolev embedding theory, we can calculate
  \begin{equation}\label{L2-Diffe}
    \begin{aligned}
      \tfrac{1}{2} \tfrac{\d}{\d t} |\dr^\eps - \mathcal{J}_\eps \dr_0|^2_{L^2} = & \l \dr^\eps - \mathcal{J}_\eps \dr_0, \dot{\dr}^\eps - \mathcal{J}_\eps (\v \cdot \nabla \dr^\eps) \r \\
      \lesssim & |\dr^\eps - \mathcal{J}_\eps \dr_0|_{L^2} |\dot{\dr}^\eps|_{L^2} + |\v|_{H^2} |\dr^\eps - \mathcal{J}_\eps \dr_0|_{L^2} |\nabla \dr^\eps|_{L^2} \,.
    \end{aligned}
  \end{equation}
  If we define the energy functional $E_\eps (t)$ as
  \begin{equation*}
    E_\eps (t) = \rho_1 |\dot{\dr}^\eps|^2_{H^s} + |\nabla \dr^\eps|^2_{H^s} + |\dr^\eps - \mathcal{J}_\eps \dr_0|^2_{L^2}\,,
  \end{equation*}
  then the inequalities \eqref{Hs}, \eqref{L-infty} and \eqref{L2-Diffe} imply that there is a positive constant $C_1$, depending only on $\lambda_1$, $\lambda_2$, $\rho_1$ and $\dr_0$, such that for all $\eps > 0$ the inequality
  \begin{equation}\label{Energy-Est-1}
    \tfrac{\d}{\d t} E_\eps (t) \leq C_1 ( 1 + | \v|_{H^{s+1}} ) [ 1 + E_\eps (t) ]^2
  \end{equation}
  holds for all $t \in [ 0, T_\eps)$.

  In the end, we will evaluate the uniform bounds of the energy functional $E_\eps (t)$ by using Gronwall arguments. Noticing that
  \begin{equation*}
    E_\eps (0) = \rho_1 |\tilde{\dr}_0|^2_{H^s} + |\nabla \dr_0|^2_{H^s} \equiv E^{in} < \infty \, ,
  \end{equation*}
  we define $T_\eps^1$ as
  \begin{equation*}
    T_\eps^1 = \Big{\{} \tau \in [ 0, T_\eps ) ; \sup_{t \in [0,\tau]} E_\eps(t) \leq 2 E^{in} \Big{\}} \geq 0\,.
  \end{equation*}
  It is immediately derived from the continuity of the energy functional $E_\eps(t)$ that $T_\eps^1 > 0$. Then the inequality \eqref{Energy-Est-1} implies that for all $t \in [0,T_\eps^1]$
  \begin{equation*}
    \tfrac{\d}{\d t} E_\eps (t) \leq \Lambda(t)  [ 1 + E_\eps (t) ] \,,
  \end{equation*}
  where the non-negative function $\Lambda(t) = C_1 (1 + 2 E^{in}) ( 1 + | \v|_{H^{s+1}} ) \in L^1 (0,T_0) $. So Gronwall inequality reduces to
  $$E_\eps (t) \leq G (t) \equiv \bigg{(} E^{in} + \int_0^t \Lambda(\tau) \d \tau \bigg{)} \exp{\int_0^t \Lambda(\tau) \d \tau} $$
  holds for all $\eps > 0$, where $G(0) = E^{in} > 0$. Since the function $G(t)$ is continuous in $t$ and is independent of $\eps > 0$, there is a $T > 0$ independent of $\eps > 0$ such that $G(t) \leq 2 E^{in}$ for all $t \in [0,T]$. Hence $T_\eps^1 \geq T > 0$ for all $\eps > 0$. Therefore, for all $\eps > 0$ and $t \in [0,T]$, we have $E_\eps (t) \leq 2 E^{in}$. Namely, we obtain the following uniform energy bound
  \begin{equation}\label{Energy-Est-2}
    \rho_1 |\dot{\dr}^\eps|^2_{H^s} + |\nabla \dr^\eps|^2_{H^s} + |\dr^\eps - \mathcal{J}_\eps \dr_0|^2_{L^2} \leq 2 E^{in}
  \end{equation}
  for all $\eps > 0$ and $t \in [0,T]$.

  {\em Step 3. Pass to the limits.} By the bounds \eqref{L-infty} and \eqref{Energy-Est-2}, we know that there is a $\dr \in L^\infty([0,T] \times \R^n)$ satisfying $\nabla \dr \,,\ \dot{\dr} \in C(0,T;H^s)$ such that $\dr$ obey the first equation of \eqref{WM} with the initial conditions \eqref{IC-WM} after passing limits in the approximate system \eqref{Approx-Syst-WM-2} as $\eps \rightarrow 0$, and we know that
  \begin{equation*}
    \left\{
      \begin{array}{l}
        \rho_1 \ddot{\dr} = \Delta \dr + \gamma (\v, \dr, \dot{\dr}) \dr + \lambda_1 (\dot{\dr} + \B \dr) + \lambda_2 \A \dr \,, \\
        (\dr , \dot{\dr})|_{t=0} = ( \dr^{in}(x), \tilde{\dr}^{in} (x) ) \in \mathbb{S}^{n-1} \times \R^n
      \end{array}
    \right.
  \end{equation*}
  with $\dr \in L^\infty([0,T] \times \R^n)$, where $$ \gamma (\v, \dr, \dot{\dr}) = - \rho_1 |\dot{\dr}|^2 + |\nabla \dr|^2  - \lambda_2 \dr^\top \A \dr \,. $$
  Then Lemma \ref{Parallel-d-Lemma} tells us that $\dr \in \mathbb{S}^{n-1}$, and the proof of Proposition \ref{Prop-WellPosed-WM} is finished.

\end{proof}

\section{The iterating approximate system}\label{Sec-IAS}

In this section, we construct the approximate system by iteration. More precisely, the iterating approximate system is constructed as follows: for all integer $k \geq 0$
\begin{equation}\label{Iter-Appr-Syt}
  \left\{
    \begin{array}{c}
      \partial_t \v^{k+1} + \v^k \cdot \nabla \v^k - \tfrac{1}{2} \Delta \v^{k+1} + \nabla p^{k+1} = - \div ( \nabla \dr^k \odot \nabla \dr^k ) + \div \tilde{\sigma} ( \v^{k+1} , \dr^k , \dot{\dr}^k ) \,, \\
      \div \v^{k+1} = 0\,, \\
      \rho_1 \partial_t \dot{\dr}^{k+1} + \rho_1 \v^k \cdot \nabla \dot{\dr}^{k+1} = \Delta \dr^{k+1} + \gamma (\v^k , \dr^{k+1}, \dot{\dr}^{k+1}) \dr^{k+1} \\
      \qquad\qquad\qquad\qquad\qquad\qquad\quad + \lambda_1 ( \dot{\dr}^{k+1} + \B^k \dr^{k+1} ) + \lambda_2 \A^k \dr^{k+1} \,, \\
      ( \v^{k+1}, \dr^{k+1} , \dot{\dr}^{k+1} )\big{|}_{t=0} = ( \v^{in}(x), \dr^{in}(x), \tilde{\dr}^{in}(x) ) \in \R^n \times \mathbb{S}^{n-1} \times \R^n \,,
    \end{array}
  \right.
\end{equation}
where $\dot{\dr}^{k+1} = \partial_t \dr^{k+1} + \v^k \cdot \nabla \dr^{k+1}$ is the iterating approximate material derivatives, and
$$\A^k = \tfrac{1}{2} ( \nabla \v^k + \nabla \v^k{}^\top ) \,, \B^k = \tfrac{1}{2} ( \nabla \v^k - \nabla \v^k{}^\top ) \,,$$
the iterating approximate Lagrangian multiplier $\gamma (\v^k , \dr^{k+1}, \dot{\dr}^{k+1}) $ is
\begin{equation*}
  \gamma (\v^k, \dr^{k+1}, \dot{\dr}^{k+1}) = - \rho_1 |\dot{\dr}^{k+1}|^2 + |\nabla \dr^{k+1}|^2 - \lambda_2 \dr^{k+1}{}^\top \A^k \dr^{k+1} \,,
\end{equation*}
and
\begin{equation*}
  \begin{aligned}
    \big{(}\tilde{\sigma}( \v^{k+1} , \dr^k , \dot{\dr}^k ) \big{)}_{ji}  = & \mu_1 \dr^k_q \dr^k_p \A^{k+1}_{qp} \dr^k_i \dr^k_j + \mu_2 \dr^k_j (\dot{\dr}^k_i + \B^{k+1}_{qi} \dr^k_q)  \\
    &+ \mu_3 \dr^k_i (\dot{\dr}^k_j + \B^{k+1}_{qj} \dr^k_q)    +  \mu_5 \dr^k_j \dr^k_q \A^{k+1}_{qi} + \mu_6 \dr^k_i \dr^k_q \A^{k+1}_{qj}
  \end{aligned}
\end{equation*}
is the $(j,i)$-entry of the iterating approximate extra stress tensor $\tilde{\sigma}( \v^{k+1} , \dr^k , \dot{\dr}^k )$. The iteration starts from $k=0$, i.e.
\begin{equation*}
  ( \v^0 (t,x), \dr^0(t,x) , \dot{\dr}^0 (t,x) ) = ( \v^{in}(x), \dr^{in}(x), \tilde{\dr}^{in}(x) )\,.
\end{equation*}

Now we state the existence conclusion of the iterating approximate system \eqref{Iter-Appr-Syt} as follows:
\begin{lemma}\label{Lm-Ext-Iter-Appr-Syst}
  Suppose that $s > \frac{n}{2} + 1$ and the initial data $( \v^{in}, \dr^{in}, \tilde{\dr}^{in} ) \in \R^n \times \mathbb{S}^{n-1} \times \R^n$ satisfy $ \v^{in}\,, \nabla \dr^{in}\,, \tilde{\dr}^{in} \in H^s $. Then there is a maximal number $T^*_{k+1} > 0$ such that the system \eqref{Iter-Appr-Syt} admits a unique solution $ ( \v^{k+1}, \dot{\dr}^{k+1}, \dr^{k+1} ) $ satisfying $\v^{k+1} \in C(0, T_{k+1}^*;H^s) \cap L^2(0, T_{k+1}^*;H^{s+1})$, and $\nabla \dr^{k+1} \,, \dot{\dr}^{k+1} \in C(0, T_{k+1}^*;H^s) $.
\end{lemma}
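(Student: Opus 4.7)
The plan rests on the key structural observation that the two sub-problems in \eqref{Iter-Appr-Syt} decouple at each iteration step: the $\v^{k+1}$-equation is a linear Stokes-type system whose coefficients and source depend only on the previously constructed triple $(\v^k,\dr^k,\dot{\dr}^k)$, while the $\dr^{k+1}$-equation is precisely the wave-map type system \eqref{WM} driven by the prescribed velocity field $\v^k$. I will therefore solve the two equations independently and take $T^*_{k+1}$ to be the minimum of the two existence times.

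For $\dr^{k+1}$, I will invoke Proposition \ref{Prop-WellPosed-WM} directly with velocity field $\v^k$. By the inductive regularity $\v^k\in L^2(0,T_k^*;H^{s+1})\hookrightarrow L^1(0,T_k^*;H^{s+1})$ (with a suitable mollification handling the base step $k=0$, where $\v^0=\v^{in}$), and with initial data $(\dr^{in},\tilde{\dr}^{in})$ satisfying the hypotheses, Proposition \ref{Prop-WellPosed-WM} yields a time $T_d>0$ and a unique solution with $\nabla\dr^{k+1},\dot{\dr}^{k+1}\in C(0,T_d;H^s)$. Since the Lagrangian multiplier $\gamma(\v^k,\dr^{k+1},\dot{\dr}^{k+1})$ inserted into \eqref{Iter-Appr-Syt} has exactly the form required by Lemma \ref{Parallel-d-Lemma}, the geometric constraint $|\dr^{k+1}|=1$ is automatically propagated from $|\dr^{in}|=1$ and will be available at the next iteration step.

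For $\v^{k+1}$, set $f^k:=-\v^k\cdot\nabla\v^k-\div(\nabla\dr^k\odot\nabla\dr^k)$, which is a known source in $L^\infty(0,T_k^*;H^{s-1})$. The equation then becomes a linear Stokes-type system whose principal part is $-\tfrac12\mu_4\Delta\v^{k+1}-\div\tilde{\sigma}(\v^{k+1},\dr^k,\dot{\dr}^k)$; here $\tilde{\sigma}$ depends linearly on $\v^{k+1}$ through $\A^{k+1}$ and $\B^{k+1}$, with coefficients involving $\dr^k$. I will construct a solution via the Fourier-truncation mollifier scheme $\mathcal{J}_\eps$ used in the proof of Proposition \ref{Prop-WellPosed-WM}: it reduces the problem to a locally solvable ODE in low-frequency space, and the energy estimate then proceeds exactly as in Proposition \ref{Prop-Basic-Ener-Law} and Lemma \ref{Lm-APE}. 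Testing by $\nabla^j\v^{k+1}$ for $0\leq j\leq s$, the computations \eqref{BEL-2}--\eqref{BEL-4} together with the constraint \eqref{Coeffs-3} or \eqref{Coeffs-4} produce a non-negative principal quadratic form bounded below by $\tfrac12\mu_4|\nabla\v^{k+1}|_{H^s}^2$, while the intermediate-derivative terms and the cross terms driven by the known $\dot{\dr}^k$ and $f^k$ are controlled by Cauchy--Schwarz, Sobolev embedding, and the inductive $H^s$-bounds on $(\v^k,\dr^k,\dot{\dr}^k)$; Gronwall then yields a uniform-in-$\eps$ bound on some $[0,T_v]$, and a compactness argument lets us pass to the limit to recover $\v^{k+1}\in C(0,T_v;H^s)\cap L^2(0,T_v;H^{s+1})$.

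Setting $T^*_{k+1}:=\min\{T_d,T_v\}$ furnishes the claimed solution; uniqueness for each sub-problem is immediate from its linearity, via an $L^2$ energy estimate on the difference of two solutions. The main obstacle is ensuring coercivity of the variable-coefficient second-order operator $-\tfrac12\mu_4\Delta-\div\tilde{\sigma}(\,\cdot\,,\dr^k,\dot{\dr}^k)$ acting on $\v^{k+1}$: this is where the Leslie-coefficient constraints \eqref{Coeffs-3} or \eqref{Coeffs-4}, combined with the inductively propagated $|\dr^k|=1$ from Lemma \ref{Parallel-d-Lemma}, play the decisive role by converting the basic energy computation of Proposition \ref{Prop-Basic-Ener-Law} into a usable lower bound at every iteration step.
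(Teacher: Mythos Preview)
Your approach is essentially the same as the paper's: decouple the $\v^{k+1}$-equation (linear Stokes type) from the $\dr^{k+1}$-equation (wave-map type with given velocity $\v^k$), solve each separately invoking Proposition~\ref{Prop-WellPosed-WM} for the latter, and take $T^*_{k+1}$ to be the minimum of the two existence times. The paper's proof is actually more terse than yours---it simply asserts the Stokes-type solvability without the mollifier/energy-estimate details you supply---so your sketch is, if anything, a more fleshed-out version of the same argument; one small slip is your claim that uniqueness for ``each sub-problem'' follows from linearity, since the $\dr^{k+1}$-equation is nonlinear, but you have already (correctly) obtained its uniqueness from Proposition~\ref{Prop-WellPosed-WM}.
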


\begin{proof}

  For the case $k+1$, the vectors $\v^k$, $\dr^k$ and $ \dot{\dr}^k$ are the known. Namely, the velocity equation of $\v^{k+1}$ is a linear Stokes type system, which admits the unique solution $\v^{k+1} \in C(0,\hat{T}_{k+1}; H^s) \cap L^2 (0,\hat{T}_{k+1}; H^{s+1})$ on the maximal time interval $[0,\hat{T}_{k+1} )$; and the orientation equation of $\dr^{k+1}$ is a Ericksen-Leslie's hyperbolic liquid crystal flow with a given bulk velocity $\v^k$, which, by Proposition \ref{Prop-WellPosed-WM}, has the unique solution $\dr^{k+1}$ satisfying $\nabla \dr^{k+1} \,, \ \dot{\dr}^{k+1} \in C (0,\tilde{T}_{k+1}; H^s)$ on the maximal time interval $[0,\tilde{T}_{k+1} )$. We denote $T^*_{k+1} = \min \{ \hat{T}_{k+1}, \tilde{T}_{k+1} \} > 0 $, and the proof of Lemma \ref{Lm-Ext-Iter-Appr-Syst} is finished.

\end{proof}

We remark that $T^*_{k+1} \leq T^*_{k}$.

\section{Local well-posedness with large initial data}\label{Sec-LWP}

In this section, we prove the local well-posedness of the Ericksen-Leslie's hyperbolic liquid crystal flow \eqref{PHLC}-\eqref{Inital-Data} with large initial data under the Leslie coefficients constraints $\mu_1 \geq 0\,, \mu_4 > 0\,, \lambda_1 < 0\,, \mu_5 + \mu_6 + \tfrac{\lambda_2^2}{\lambda_1} \geq 0$ or $\mu_1 \geq 0\,, \mu_4 > 0\,, \lambda_1 = 0\,, (1 - \delta ) \mu_4 ( \mu_5 + \mu_6 ) \geq 2 | \lambda_2 |^2$ for some $\delta \in (0,1)$, i.e., the first part of Theorem \ref{Main-Thm}. The key point is to justify the positive lower bound of $T^*_{k+1}$ and the uniform energy bounds of the iterating approximate system \eqref{Iter-Appr-Syt}, which will be shown in Lemma \ref{Lm-Lower-Bnds}. In the end, by the compactness arguments and Lemma \ref{Parallel-d-Lemma}, we can pass to the limits in the system \eqref{Iter-Appr-Syt} and then reach our goal, which is a standard proceeding. We define the following energy functionals:
\begin{equation*}
    \begin{aligned}
      E_{k+1} (t) &= |\v^{k+1}|^2_{H^s} + \rho_1 |\dot{\dr}^{k+1}|^2_{H^s} + |\nabla \dr^{k+1}|^2_{H^s} \,, \\
      D_{k+1} (t) &= \tfrac{1}{2} \mu_4 |\nabla \v^{k+1}|^2_{H^s}\,,
    \end{aligned}
  \end{equation*}
  and precisely state our key lemma:

\begin{lemma}\label{Lm-Lower-Bnds}
  Assume that $(\v^{k+1}, \dr^{k+1})$ is the solution to the iterating approximate system \eqref{Iter-Appr-Syt} and we define
   \begin{equation*}
     T_{k+1} \equiv \sup \Big{\{} \tau \in [0, T^*_{k+1} ) ; \sup_{t \in [0,\tau]}  E_{k+1} (t) + \int_0^\tau  D_{k+1} (t) \d t \leq M \Big{\}} \,,
   \end{equation*}
   where $T^*_{k+1} > 0$ is the existence time of the iterating approximate system \eqref{Iter-Appr-Syt}. Then for any fixed $M > E^{in}$ there is a constant $T > 0$, depending only on Leslie coefficients, $M$ and $E^{in}$, such that
   $$ T_{k+1} \geq T > 0\,. $$
\end{lemma}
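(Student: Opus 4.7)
The plan is to prove the lemma by induction on $k$, propagating a uniform lower bound $T$ on the existence time that depends only on $M$, $E^{in}$, and the Leslie coefficients. The base case $k=0$ is immediate since $(\v^0,\dr^0,\dot{\dr}^0)$ is constant in time and $E_0(t)\equiv E^{in}<M$. For the inductive step, I will assume $T_k\ge T$ together with $\sup_{t\in[0,T]}E_k(t)+\int_0^TD_k(t)\,\d t\le M$ and establish the same estimate for level $k+1$ on a possibly smaller but positive time interval that turns out, after the Gronwall step, to be at least $T$ for a judicious choice of $T$.

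First, I would derive the analogue of Lemma~\ref{Lm-APE} for the iterating approximate system \eqref{Iter-Appr-Syt}. Apply $\nabla^k$ (for $0\le k\le s$) to the velocity and director equations of \eqref{Iter-Appr-Syt} and test against $\nabla^k\v^{k+1}$ and $\nabla^k\dot{\dr}^{k+1}$, exactly as in the a priori estimate. The endpoint-derivative terms from $\div\tilde\sigma(\v^{k+1},\dr^k,\dot{\dr}^k)$, from the Lagrange multiplier $\gamma(\v^k,\dr^{k+1},\dot{\dr}^{k+1})\dr^{k+1}$, and from $\lambda_1(\dot{\dr}^{k+1}+\B^k\dr^{k+1})+\lambda_2\A^k\dr^{k+1}$ reassemble, modulo commutators, into the same dissipative combination identified in Proposition~\ref{Prop-Basic-Ener-Law}; here I crucially use that $|\dr^{k+1}|=1$ pointwise, which is supplied by applying Lemma~\ref{Parallel-d-Lemma} (equivalently Proposition~\ref{Prop-WellPosed-WM}) to the director equation of \eqref{Iter-Appr-Syt} driven by the given field $\v^k$. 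Under the coefficient assumption \eqref{Coeffs-3} or \eqref{Coeffs-4}, these endpoint contributions have a fixed sign and yield a dissipation $\widetilde D_{k+1}(t)\ge c_0 D_{k+1}(t)-C(M)\bigl(D_k(t)+1\bigr)$ for some $c_0>0$, where the negative part collects cross terms in which a top-order factor $\nabla\v^k$ appears and is absorbed using $2ab\le\varepsilon a^2+\varepsilon^{-1}b^2$.

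The intermediate-derivative and transport commutators are handled exactly as in the proof of Lemma~\ref{Lm-APE}: Hölder, Sobolev embedding ($s>n/2+1$), and the incompressibility $\div\v^k=0$ give polynomial bounds in $E_k^{1/2}$ and $E_{k+1}^{1/2}$, with any top-order factor $|\nabla\v^{k+1}|_{H^s}$ paired with something small in $E_k$ so that it can be absorbed into $\tfrac14\mu_4|\nabla\v^{k+1}|^2_{H^s}$. Putting everything together I will obtain a differential inequality of the form
\begin{equation*}
\tfrac{\d}{\d t}E_{k+1}(t)+c_0D_{k+1}(t)\le C(M)\bigl(1+E_{k+1}(t)\bigr)^{N}+C(M)D_k(t),
\end{equation*}
with $N$ and $C(M)$ independent of $k$. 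Integrating in $t$ and using the inductive bound $\int_0^tD_k(s)\,\d s\le M$ gives
\begin{equation*}
E_{k+1}(t)+c_0\!\int_0^tD_{k+1}(s)\,\d s\le E^{in}+C(M)M+C(M)\!\int_0^t\bigl(1+E_{k+1}(s)\bigr)^{N}\d s.
\end{equation*}
A standard continuation/ODE comparison then shows there exists $T=T(M,E^{in},\text{Leslie coeffs})>0$, independent of $k$, such that the right-hand side stays below $M$ on $[0,T]$. Since $E_{k+1}(0)=E^{in}<M$ by continuity, the definition of $T_{k+1}$ yields $T_{k+1}\ge T$, closing the induction.

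The main obstacle is the management of the cross-level endpoint terms: after reshuffling, one gets top-order contributions like $\lambda_1\langle(\nabla^s\B^k)\dr^{k+1},\nabla^s\dot{\dr}^{k+1}\rangle$ and $\lambda_2\langle(\nabla^s\A^k)\dr^{k+1},\nabla^s\dot{\dr}^{k+1}\rangle$ in which the highest derivative falls on the previous iterate $\v^k$ rather than on $\v^{k+1}$. Completing the square as in \eqref{APE-5} and \eqref{APE-5*} absorbs these against $D_k$ plus a tolerable share of $D_{k+1}$, but this requires precisely the sign conditions \eqref{Coeffs-3} or \eqref{Coeffs-4}; without them the scheme does not close. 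Once this bookkeeping is done, the remainder of the argument is the routine Gronwall–continuity step already used at the end of the proof of Proposition~\ref{Prop-WellPosed-WM}.
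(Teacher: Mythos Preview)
Your induction does not close as written. The failure is in the step where you bound the cross-level term $C(M)\int_0^t D_k(s)\,\d s$ by $C(M)M$: after this replacement your integrated inequality reads
\[
E_{k+1}(t)+c_0\!\int_0^t D_{k+1}\le E^{in}+C(M)M+C(M)\!\int_0^t(1+E_{k+1})^N,
\]
and the right-hand side at $t=0$ is already $E^{in}+C(M)M$, which for any reasonable constant $C(M)\gtrsim 1$ exceeds $M$. Hence there is no $T>0$, uniform in $k$, on which the right-hand side stays below $M$. The underlying issue is that the inductive hypothesis only controls $\int_0^{T_k}D_k\le M$; it gives no \emph{uniform} rate at which $\int_0^t D_k$ vanishes as $t\to 0$, and your use of Young's inequality $2ab\le\varepsilon a^2+\varepsilon^{-1}b^2$ on the director-equation endpoint terms $\lambda_1\langle(\nabla^s\B^k)\dr^{k+1},\nabla^s\dot{\dr}^{k+1}\rangle$ is exactly what produces this uncontrollable $D_k$.

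The paper avoids this by \emph{not} completing the square across levels. Because $\tilde\sigma(\v^{k+1},\dr^k,\dot{\dr}^k)$ involves only $\nabla\v^{k+1}$, the velocity equation alone already yields the nonnegative dissipation blocks (with $\dr^k$ in place of $\dr$), so $D_{k+1}$ comes for free under \eqref{Coeffs-3} or \eqref{Coeffs-4}. All director-equation terms carrying $\nabla\v^k$ are then left on the right-hand side \emph{linearly}, i.e.\ as $D_k^{1/2}$ multiplied by powers of $E_{k+1}$. This gives $\tfrac{\d}{\d t}E_{k+1}+D_{k+1}\le C(1+E_k^8+D_k^{1/2})(1+E_{k+1})^2$, and after integrating, Cauchy--Schwarz in time turns $\int_0^t D_k^{1/2}$ into $M^{1/2}t^{1/2}$, which \emph{does} vanish uniformly as $t\to 0$. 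That single half-power of $t$ is what allows a $T$ independent of $k$. Your scheme can be repaired by dropping the Young step on those cross terms and keeping $|\nabla\v^k|_{H^s}$ to the first power.
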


\begin{proof}
  By the continuity of the functionals $E_{k+1} (t)$, we know that $T_{k+1} > 0$. If the sequence $\{T_{k}; k =1,2,\cdots \}$ is increasing, the conclusion immediately holds. So we consider that the sequence $T_k$ is not increasing. Now we choose a strictly increasing sequence $\{ k_p \}_{p=1}^\Lambda$ as follows:
  \begin{equation*}
    k_1 = 1\,, k_{p+1} = \min \big{\{} k;\, k > k_p\,, T_k < T_{k_p} \big{\}}\,.
  \end{equation*}
  If $\Lambda < \infty$, the conclusion holds. Consequently, we consider the case $\Lambda = \infty$. By the definition of $k_p$, the sequence $T_{k_p}$ is strictly increasing, so that our goal is to prove
  $$ \lim_{p \rightarrow \infty} T_{k_p} > 0 \,. $$

  We first estimate the energy of the iterating approximate system \eqref{Iter-Appr-Syt}. For all $0 \leq l \leq s$, we take $\nabla^l$ in the third equation of \eqref{Iter-Appr-Syt} and multiply $\nabla^l \dot{\dr}^{k+1}$ by integration on $\R^n$. Then by the similar estimate arguments in a priori estimates in Lemma \ref{Lm-APE}, we gain
  \begin{equation}\label{Iter-Est-1}
    \begin{aligned}
      & \tfrac{1}{2} \tfrac{\d}{\d t} \Big{(} \rho_1 |\dot{\dr}^{k+1}|^2_{H^s} + |\nabla \dr^{k+1}|^2_{H^s} \Big{)} \lesssim  \big{(} \rho_1 |\dot{\dr}^{k+1}|^2_{H^s} + |\nabla \dr^{k+1}|^2_{H^s} \big{)} |\nabla \v^k|_{H^s} \\
      & + \rho_1 |\dot{\dr}^{k+1}|^3_{H^s} |\nabla \dr^{k+1}|_{H^s} + |\dot{\dr}^{k+1}|_{H^s} |\nabla \dr^{k+1}|^3_{H^s} \\
      & + ( |\lambda_1| + |\lambda_2| ) ( 1 + |\nabla \dr^{k+1}|_{H^s}  ) |\dot{\dr}^{k+1}|_{H^s} |\nabla \v^k|_{H^s}  + |\lambda_1| |\nabla \v^k|_{H^s} |\dot{\dr}^{k+1}|_{H^s} \sum_{p=1}^3 |\nabla \dr^{k+1}|^p_{H^s} \,.
    \end{aligned}
  \end{equation}

  For all $0 \leq l \leq s$, we act $\nabla^l$ on the first equation of \eqref{Iter-Appr-Syt} and take $L^2$-inner product with $\nabla^l \v^{k+1}$. Then it is similarly derived from a priori estimate in Lemma \ref{Lm-APE} that for the case $\lambda_1 < 0$, i.e. \eqref{Coeffs-3}
    \begin{align}\label{Iter-Est-2}
      \no & \tfrac{1}{2} \tfrac{\d}{\d t} |\v^{k+1}|^2_{H^s} + \tfrac{1}{2} \mu_4 |\nabla \v^{k+1}|^2_{H^s} + \mu_1 \sum_{l=0}^s | \dr^k{}^\top ( \nabla^l \A^{k+1} ) \dr^k |^2_{L^2} \\
      \no & - \lambda_1 \sum_{l=0}^s |(\nabla^l \B^{k+1}) \dr^k + \tfrac{\lambda_2}{\lambda_1} (\nabla^l \A^{k+1}) \dr^k |^2_{L^2} + ( \mu_5 + \mu_6 + \tfrac{\lambda_2^2}{\lambda_1} ) \sum_{l=0}^s | (\nabla^l \A^{k+1}) \dr^k |^2_{L^2} \\
      \lesssim & ( |\v^k|^2_{H^s} + |\nabla \dr^k|^2_{H^s} ) |\nabla \v^{k+1}|_{H^s} + ( |\mu_2| + |\mu_3| ) ( 1 + |\nabla \dr^k|_{H^s} ) |\dot{\dr}^k|_{H^s} |\nabla \v^{k+1}|_{H^s} \\
      \no & + ( \mu_1 +  |\mu_2| + |\mu_3| + |\mu_5| + |\mu_6| ) \sum_{p=0}^4 |\nabla \dr^k|^p_{H^s} |\v^{k+1}|_{H^s} |\nabla \v^{k+1}|_{H^s} \,.
    \end{align}

  Then the inequalities \eqref{Iter-Est-1} and \eqref{Iter-Est-2} imply that
  \begin{equation}\label{Iter-Est-2*}
    \begin{aligned}
      & \tfrac{1}{2} \tfrac{\d}{\d t} \Big{(} |\v^{k+1}|^2_{H^s} + \rho_1 |\dot{\dr}^{k+1}|^2_{H^s} + |\nabla \dr^{k+1}|^2_{H^s} \Big{)}  + \mu_1 \sum_{l=0}^s | \dr^k{}^\top ( \nabla^l \A^{k+1} ) \dr^k |^2_{L^2} + \tfrac{1}{2} \mu_4 |\nabla \v^{k+1}|^2_{H^s} \\
      & - \lambda_1 \sum_{l=0}^s |(\nabla^l \B^{k+1}) \dr^k + \tfrac{\lambda_2}{\lambda_1} (\nabla^l \A^{k+1}) \dr^k |^2_{L^2} + ( \mu_5 + \mu_6 + \tfrac{\lambda_2^2}{\lambda_1} ) \sum_{l=0}^s | (\nabla^l \A^{k+1}) \dr^k |^2_{L^2} \\
      \lesssim & ( |\v^k|^2_{H^s} + |\nabla \dr^k|^2_{H^s} ) |\nabla \v^{k+1}|_{H^s} + ( |\mu_2| + |\mu_3| ) ( 1 + |\nabla \dr^k|_{H^s} ) |\dot{\dr}^k|_{H^s} |\nabla \v^{k+1}|_{H^s} \\
      & + ( \mu_1 +  |\mu_2| + |\mu_3| + |\mu_5| + |\mu_6| ) \sum_{p=0}^4 |\nabla \dr^k|^p_{H^s} |\v^{k+1}|_{H^s} |\nabla \v^{k+1}|_{H^s} \\
      & + \big{(} \rho_1 |\dot{\dr}^{k+1}|^2_{H^s} + |\nabla \dr^{k+1}|^2_{H^s} \big{)} |\nabla \v^k|_{H^s}  + \rho_1 |\dot{\dr}^{k+1}|^3_{H^s} |\nabla \dr^{k+1}|_{H^s} + |\dot{\dr}^{k+1}|_{H^s} |\nabla \dr^{k+1}|^3_{H^s} \\
      & + ( |\lambda_1| + |\lambda_2| ) ( 1 + |\nabla \dr^{k+1}|_{H^s}  ) |\dot{\dr}^{k+1}|_{H^s} |\nabla \v^k|_{H^s}  + |\lambda_1| |\nabla \v^k|_{H^s} |\dot{\dr}^{k+1}|_{H^s} \sum_{p=1}^3 |\nabla \dr^{k+1}|^p_{H^s} \,,
    \end{aligned}
  \end{equation}
  which immediately reduce to
  \begin{equation}\label{Iter-Est-3}
      \tfrac{\d}{\d t} E_{k+1} (t) + D_{k+1} (t) \leq C_1 ( 1 + E_k^8 (t) + D_k^\frac{1}{2} (t) ) \big{[} 1  +  E_{k+1} (t) \big{]}^2
  \end{equation}
  for all $t \in [ 0, T^*_{k+1} )$, where the constant $C_1 > 0$ depends only on the Leslie coefficients and the inertia density constant $\rho_1$. Here we make use of the definition of the functionals $E_{k} (t)$, $D_k (t)$ and the coefficients conditions \eqref{Coeffs-3}, i.e. $ \mu_1 \geq 0\,, \mu_4 > 0\,, \lambda_1 < 0\,, \mu_5 + \mu_6 + \tfrac{\lambda_2^2}{\lambda_1} \geq 0$. Actually, for the case $\lambda_1 = 0$, i.e. \eqref{Coeffs-4}, we can gain an inequality replacing the last three terms in the left-hand side of the inequality \eqref{Iter-Est-2*} with the terms
  \begin{equation*}
    \begin{aligned}
      & \tfrac{1}{2} \delta \mu_4 |\nabla \v^{k+1} |^2_{H^s} + ( \mu_5 + \mu_6 - \tfrac{2 \lambda_2^2}{ (1-\delta) \mu_4 } ) \sum_{l=0}^s |( \nabla^l \A^{k+1} ) \dr^k|^2_{L^2} \\
      & + \tfrac{1}{2} ( 1 - \delta ) \mu_4 \sum_{l=0}^s \big( |\nabla^{l+1} \v^{k+1}|_{L^2} - \tfrac{2 |\lambda_2| }{ (1 - \delta) \mu_4 } | (\nabla^l \A^{k+1}) \dr^k |_{L^2} \big)^2 \,.
    \end{aligned}
  \end{equation*}
Thus, we also gain the inequality \eqref{Iter-Est-3} by making use of the conditions \eqref{Coeffs-4}.

Recalling the definition of the sequence $\{ k_p \}$, we know that for any integer $N < k_p$,
$$ T_N > T_{k_p}\,. $$
We take $k= k_p - 1$ in the inequality \eqref{Iter-Est-3}, and then by the definition of $T_k$ we have for all $t \in [ 0, T_{k_p} ]$
\begin{equation}\label{Iter-Est-4}
  \tfrac{\d}{\d t} E_{k_p} (t) + D_{k_p} (t) \leq L_{k_p -1 }(t) \big{[} 1 + E_{k_p} (t) \big{]}^2\,,
\end{equation}
where the functions $ L_{k_p -1 }(t) = C_1 \big{[} 1 + M^8 +  D^\frac{1}{2}_{k_p -1 } (t) \big{]} \in L^2 ([ 0,T_{k_p }]) $. Noticing that $E_{k_p} (0) = E^{in}$, we solve the ODE inequality \eqref{Iter-Est-4} that for all $t \in [ 0, T_{k_p} ]$
\begin{equation*}
  E_{k_p} (t) \leq \frac{ E^{in} + C_1 ( 1 + E^{in} ) \big{[} (1+M^8) t + M^\frac{1}{2} t^\frac{1}{2} \big{]} }{ 1 - C_1 ( 1 + E^{in} ) \big{[} (1+M^8) t + M^\frac{1}{2} t^\frac{1}{2} \big{]}  } \equiv \mathcal{G}(t)\,,
\end{equation*}
where the function $\mathcal{G}(t)$ is strictly increasing and continuous and $\mathcal{G}(0) = E^{in}$. Plugging the above inequality into the ODE inequality \eqref{Iter-Est-4} and then integrating on $[0,t]$ for any $t \in [ 0, T_{k_p} ]$, we estimate that
\begin{equation}\label{Iter-Est-5}
  E_{k_p} + \int_0^t D_{k_p} (\tau) \d \tau \leq E^{in} + C_1 ( 1 + \mathcal{G} (t) )^2 \big{[} (1+M^8) t + M^\frac{1}{2} t^\frac{1}{2} \big{]} \equiv \mathcal{H}(t)\,,
\end{equation}
where the function $\mathcal{H}(t)$ is also strictly increasing and continuous and $\mathcal{H}(0) = E^{in}$. Thus, by the continuity and monotonicity of the function $ \mathcal{H}(t) $, we know that for any $M > E^{in}$ and $p \in \mathbb{N}^+$, there is a number $t^* > 0$, depending only on $M$, initial energy $E^{in}$, Leslie coefficients and inertia density constant $\rho_1$, such that for all $t \in [0, t^*]$
$$ E_{k_p} + \int_0^t D_{k_p} (\tau) \d \tau \leq M \,. $$
By the definition of $T_k$ we derive that $T_{k_p} \geq t^* > 0$, hence $T = \lim\limits_{p \rightarrow \infty} T_{k_p} \geq t^* > 0 $. Consequently, we complete the proof of Lemma \ref{Lm-Lower-Bnds}.

\end{proof}

\noindent{\bf Proof of the first part of Theorem \ref{Main-Thm}: Local well-posedness}. By Lemma \ref{Lm-Lower-Bnds} we know that for any fixed $M > E^{in}$ there is a $T > 0$ such that for all integer $k \geq 0$ and $t \in [0,T]$
\begin{equation}\label{Unif-Bnds}
  \sup_{t \in [0,T]} \Big{(} |\v^{k+1}|^2_{H^s} + \rho_1 |\dot{\dr}^{k+1}|^2_{H^s} + |\nabla \dr^{k+1}|^2_{H^s} \Big{)} + \tfrac{1}{2} \mu_4 \int_0^T |\nabla \v^{k+1}|^2_{H^s} \d t \leq M \,.
\end{equation}
Then, by compactness arguments and Lemma \ref{Parallel-d-Lemma}, we gain vectors $(\v, \dr) \in \R^n \times \mathbb{S}^{n-1}$ satisfying $\v \in L^\infty(0,T;H^s) \cap L^2 ( 0,T; H^{s+1} )$ and $\dot{\dr}\,, \nabla \dr \in L^\infty(0,T;H^s) $, which solve Ericksen-Leslie's hyperbolic liquid crystal flow \eqref{PHLC} with the initial conditions \eqref{Inital-Data}. Moreover, $(\v, \dr)$ admit the bound
\begin{equation*}
  \sup_{t \in [0,T]} \Big{(} |\v|^2_{H^s} + \rho_1 |\dot{\dr}|^2_{H^s} + |\nabla \dr|^2_{H^s} \Big{)} + \tfrac{1}{2} \mu_4 \int_0^T |\nabla \v|^2_{H^s} \d t \leq M \,.
\end{equation*}
Then the proof of the first part of Theorem \ref{Main-Thm} is finished.

\section{Global existence with small initial data}\label{Sec-Global}

In this section, we prove the global well-posedness of the system \eqref{PHLC} with small initial data in an {\em additional} coefficients constraint $\lambda_1 < 0$, namely, we verify the second part of Theorem \ref{Main-Thm}. We notice that the $H^s$-estimate \eqref{APE-Hs} does not have enough dissipation, if we aim at constructing the global solution with small initial data. Thus, we need to find a new energy estimate which involves enough dissipated terms. We first introduce the following energy functionals for any $\eta > 0$:
\begin{equation*}
  \begin{aligned}
    \mathcal{E}_\eta (t) =& |\v|^2_{H^s} + ( - \eta \lambda_1 + 1 - \eta \rho_1 ) |\nabla \dr|^2_{H^{s-1}} + |\nabla^{s+1} \dr|^2_{L^2} \\
    & + \rho_1 ( 1 - \eta ) |\dot{\dr}|^2_{{H}^s} + \rho_1 \eta |\dot{\dr}|^2_{L^2} + \eta \rho_1 |\dot{\dr} + \dr|^2_{\dot{H}^s} \,, \\
    \mathcal{D}_\eta (t) =& \tfrac{1}{4} \mu_4 |\nabla \v|^2_{H^s} + \tfrac{1}{4} \eta |\nabla \dr|^2_{\dot{H}^s} - \tfrac{1}{2} \lambda_1 \sum_{k=0}^s |\nabla^k \dot{\dr} + (\nabla^k \B) \dr + \tfrac{\lambda_2}{\lambda_1} (\nabla^k \A) \dr |^2_{L^2} \\
    & + \mu_1 \sum_{k=0}^s |\dr^\top (\nabla^k \A) \dr|^2_{L^2} + ( \mu_5 + \mu_6 + \tfrac{\lambda_2^2}{\lambda_1}  ) \sum_{k=0}^s |(\nabla^k \A) \dr|^2_{L^2} \\
    & + 3 \eta \rho_1 \sum_{k=1}^s | (\nabla^k \B) \dr + \tfrac{\lambda_2}{\lambda_1} (\nabla^k \A) \dr |^2_{L^2} \,.
  \end{aligned}
\end{equation*}
Then we establish the following key lemma to prove the global existence:

\begin{lemma}\label{Lm-Global}
  There exists a small $\eta_0 > 0$, depending only on Leslie coefficients and inertia density constant $\rho_1$, such that if $(\v,\dr)$ is the local solution constructed in the first part of Theorem \ref{Main-Thm}, then for all $0 < \eta \leq \eta_0$
  \begin{equation*}
    \begin{aligned}
      \tfrac{1}{2} \tfrac{\d}{\d t} \mathcal{E}_\eta (t) + \mathcal{D}_\eta (t) \leq C \sum_{p=1}^4 \mathcal{E}_\eta^\frac{p}{2} (t) \mathcal{D}_\eta (t) \,,
    \end{aligned}
  \end{equation*}
  where the positive constant $C$ depends only upon the inertia density constant $\rho_1$ and Leslie coefficients.
\end{lemma}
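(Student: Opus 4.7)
The plan is to upgrade the dissipation of Lemma~\ref{Lm-APE} so that it also controls the top-order norm $|\nabla\dr|_{\dot H^s}$, which is the missing piece for a Gronwall-type closure with small data. The mechanism exploits the additional assumption $\lambda_1<0$ as a genuine friction term in the third equation of \eqref{PHLC}, in the spirit of the classical cross-term trick for damped hyperbolic systems: incorporating $2\eta\rho_1\l\nabla^s\dot{\dr},\nabla^s\dr\r_{L^2}$ into the energy converts the elliptic piece $\Delta\dr$ in the equation for $\dot{\dr}$ into the usable dissipation $\eta|\nabla^{s+1}\dr|^2_{L^2}=\eta|\nabla\dr|^2_{\dot H^s}$. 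The specific linear combination defining $\mathcal{E}_\eta$ is engineered so that the expansion $\eta\rho_1|\dot{\dr}+\dr|^2_{\dot H^s}=\eta\rho_1(|\nabla^s\dot{\dr}|^2_{L^2}+2\l\nabla^s\dot{\dr},\nabla^s\dr\r_{L^2}+|\nabla^s\dr|^2_{L^2})$ builds in precisely this cross term, while the modified coefficient $-\eta\lambda_1+1-\eta\rho_1$ of $|\nabla\dr|^2_{H^{s-1}}$ keeps $\mathcal{E}_\eta$ equivalent to $|\v|^2_{H^s}+\rho_1|\dot{\dr}|^2_{H^s}+|\nabla\dr|^2_{H^s}$ for small $\eta$.

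First, I would replay the proof of Lemma~\ref{Lm-APE} under the constraint \eqref{Coeffs-3}, but retaining only half of each strictly positive dissipation piece in the main bound; this explains the halved prefactors $\tfrac14\mu_4$ and $-\tfrac12\lambda_1$ in $\mathcal{D}_\eta$, while $\mu_1$ and $\mu_5+\mu_6+\lambda_2^2/\lambda_1$, which need not absorb any new error, retain full strength. The second ingredient is the cross-term identity for $Q_s(t):=\rho_1\l\nabla^s\dot{\dr},\nabla^s\dr\r_{L^2}$. Using $\partial_t\dr=\dot{\dr}-\v\cdot\nabla\dr$ together with the third equation of \eqref{PHLC} rewritten as $\rho_1\partial_t\dot{\dr}=\Delta\dr+\gamma\dr+\lambda_1(\dot{\dr}+\B\dr)+\lambda_2\A\dr-\rho_1\v\cdot\nabla\dot{\dr}$, and integrating $\l\nabla^s\Delta\dr,\nabla^s\dr\r=-|\nabla^{s+1}\dr|^2_{L^2}$ by parts, one obtains
\begin{equation*}
\tfrac{\d}{\d t}Q_s=-|\nabla^{s+1}\dr|^2_{L^2}+\rho_1|\nabla^s\dot{\dr}|^2_{L^2}+\lambda_1\l\nabla^s\dot{\dr},\nabla^s\dr\r+\lambda_1\l(\nabla^s\B)\dr,\nabla^s\dr\r+\lambda_2\l(\nabla^s\A)\dr,\nabla^s\dr\r+\mathcal{R}_s,
\end{equation*}
where $\mathcal{R}_s$ collects the contributions from $\nabla^s(\gamma\dr)$ and from the transport commutators $[\nabla^s,\v\cdot\nabla]\dot{\dr}$ and $[\nabla^s,\v\cdot\nabla]\dr$.

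Multiplying this identity by $\eta$ and adding it to the replayed baseline estimate yields the desired damping $\tfrac14\eta|\nabla\dr|^2_{\dot H^s}$ in $\mathcal{D}_\eta$ (after the earlier halving), at the cost of the new positive term $\eta\rho_1|\nabla^s\dot{\dr}|^2_{L^2}$. I then use the elementary bound
\begin{equation*}
|\nabla^s\dot{\dr}|^2_{L^2}\leq 2\bigl|\nabla^s\dot{\dr}+(\nabla^s\B)\dr+\tfrac{\lambda_2}{\lambda_1}(\nabla^s\A)\dr\bigr|^2_{L^2}+2\bigl|(\nabla^s\B)\dr+\tfrac{\lambda_2}{\lambda_1}(\nabla^s\A)\dr\bigr|^2_{L^2};
\end{equation*}
the first piece is absorbed into the reserved half $-\tfrac12\lambda_1|\nabla^s\dot{\dr}+\cdots|^2_{L^2}$ of the baseline dissipation provided $\eta\leq\eta_0(\lambda_1,\rho_1)$ is sufficiently small, and the second piece is collected into the new summand $3\eta\rho_1\sum_{k=1}^s|(\nabla^k\B)\dr+(\lambda_2/\lambda_1)(\nabla^k\A)\dr|^2_{L^2}$ of $\mathcal{D}_\eta$ (the sum runs over $1\leq k\leq s$ because applying the same cross-term mechanism at each intermediate order produces analogous residuals). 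The linear $\v$-couplings $\lambda_1\l(\nabla^s\B)\dr,\nabla^s\dr\r$ and $\lambda_2\l(\nabla^s\A)\dr,\nabla^s\dr\r$ are handled by Young's inequality against the reserved $\tfrac14\mu_4|\nabla\v|^2_{H^s}$ and the $|\nabla^s\dr|^2_{L^2}$ piece sitting inside the reweighted $|\nabla\dr|^2_{H^{s-1}}$; the auxiliary term $\rho_1\eta|\dot{\dr}|^2_{L^2}$ in $\mathcal{E}_\eta$ plays an analogous role at the $L^2$ level, absorbing low-order transport errors in $\mathcal{R}_s$ of $\l\v\cdot\nabla\dot{\dr},\dr\r$-type.

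The main obstacle is the estimation of the purely nonlinear remainder $\mathcal{R}_s$, in particular the triple-product contributions from the cubic Lagrange multiplier $\gamma=-\rho_1|\dot{\dr}|^2+|\nabla\dr|^2-\lambda_2\dr^\top\A\dr$. However, a term-by-term inspection—entirely parallel to the treatment of $I_1$, $I_2$, $I_3$ and $I_4^{im}$ in the proof of Lemma~\ref{Lm-APE}—shows that every contribution carries at least one factor belonging to $\mathcal{D}_\eta^{1/2}$, namely $\nabla\v$, $\nabla^{s+1}\dr$, the combination $\nabla^s\dot{\dr}+(\nabla^s\B)\dr+(\lambda_2/\lambda_1)(\nabla^s\A)\dr$, or one of the $(\nabla^k\A)\dr$ controls; the remaining factors cost at most four powers of $\mathcal{E}_\eta^{1/2}$ via the Sobolev embedding $H^{s-1}\hookrightarrow L^\infty$ available for $s>n/2+1$. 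Collecting all contributions and fixing $\eta_0>0$ small enough in terms of $\rho_1$ and the Leslie coefficients produces the claimed inequality $\tfrac12\tfrac{\d}{\d t}\mathcal{E}_\eta+\mathcal{D}_\eta\leq C\sum_{p=1}^4\mathcal{E}_\eta^{p/2}\mathcal{D}_\eta$.
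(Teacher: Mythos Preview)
Your overall strategy is the same as the paper's: test the third equation of \eqref{PHLC} against $\nabla^k\dr$ for $1\le k\le s$ to manufacture the cross term and the new dissipation $|\nabla^{k+1}\dr|^2_{L^2}$, then add $\eta$ times the resulting identity to the baseline estimate \eqref{APE-Hs} and choose $\eta_0$ small so that the bad pieces $-2\eta\rho_1\sum_k|\nabla^k\dot{\dr}+(\nabla^k\B)\dr+\tfrac{\lambda_2}{\lambda_1}(\nabla^k\A)\dr|^2_{L^2}$ and the extra $|\nabla\v|^2_{H^s}$ contributions are absorbed. This is exactly what the paper does.

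There is, however, one genuine gap in your accounting of the nonlinear remainder. You assert that ``every contribution carries at least one factor belonging to $\mathcal{D}_\eta^{1/2}$'' and that the rest costs powers of $\mathcal{E}_\eta^{1/2}$. That only yields a bound of the form $\sum_p\mathcal{E}_\eta^{p/2}\mathcal{D}_\eta^{1/2}$, which is strictly weaker than the claimed $\sum_p\mathcal{E}_\eta^{p/2}\mathcal{D}_\eta$ and would not close the global argument (a term $\epsilon\,\mathcal{D}_\eta^{1/2}$ cannot be absorbed by $\mathcal{D}_\eta$). The correct observation---implicit in the paper's final step---is that the enlarged dissipation now controls \emph{all} of $|\nabla\v|_{H^s}$, $|\v|_{\dot H^s}$, $|\nabla\dr|_{\dot H^s}$ and, via the triangle inequality $|\nabla^k\dot{\dr}|_{L^2}\le|\nabla^k\dot{\dr}+(\nabla^k\B)\dr+\tfrac{\lambda_2}{\lambda_1}(\nabla^k\A)\dr|_{L^2}+(1-\tfrac{|\lambda_2|}{\lambda_1})|\nabla^{k+1}\v|_{L^2}$, also $|\dot{\dr}|_{H^s}$. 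Every nonlinear term in \eqref{APE-Hs} and in your $\mathcal{R}_s$ carries at least \emph{two} such factors, which is what produces the full $\mathcal{D}_\eta$ on the right.

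A smaller point: the linear couplings $\eta\lambda_1\l(\nabla^k\B)\dr,\nabla^k\dr\r$ and $\eta\lambda_2\l(\nabla^k\A)\dr,\nabla^k\dr\r$ cannot be ``absorbed into the $|\nabla^s\dr|^2_{L^2}$ piece sitting inside $|\nabla\dr|^2_{H^{s-1}}$'' as you suggest, since that is an energy term, not a dissipation term, and would leave a residual $C\eta\,\mathcal{E}_\eta$ on the right. The paper instead integrates by parts once to trade $\nabla^k\dr$ for $\nabla^{k+1}\dr$, then uses Young's inequality to split into $\tfrac14|\nabla^{k+1}\dr|^2_{L^2}$ (half of the new dissipation) and a $|\nabla\v|^2_{H^s}$ piece absorbed by $\tfrac14\mu_4|\nabla\v|^2_{H^s}$ for $\eta$ small; this is why $\eta_0$ depends on $\mu_4$, $\lambda_1$, $\lambda_2$ as in the paper's explicit choice.
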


\begin{remark}
  The small $\eta_0 > 0$ is chosen to guarantee that the coefficients of the energy functional $ \mathcal{E}_\eta (t) $ are positive for all $\eta \in (0, \eta_0]$. The coefficients constraints $\mu_1 \geq 0\,, \mu_4 > 0\,, \lambda_1 < 0 \,, \mu_5 + \mu_6 + \tfrac{\lambda_2^2}{\lambda_1} \geq 0$ automatically imply the non-negativity of the coefficients of the energy functional $ \mathcal{D}_\eta (t) $.
\end{remark}

\begin{proof}

  For all $1 \leq k \leq s$, taking $\nabla^k$ on the third orientation equation of \eqref{PHLC}, multiplying $\nabla^k \dr$ by integration by parts, we have
  \begin{equation}\label{Global-1}
    \begin{aligned}
      & \tfrac{1}{2} \tfrac{\d}{\d t} \Big{(} - \lambda_1 |\nabla^k \dr|^2_{L^2} + \rho_1 |\nabla^k ( \dot{\dr} + \dr )|^2_{L^2} - \rho_1 |\nabla^k \dot{\dr}|^2_{L^2} - \rho_1 |\nabla^k \dr|^2_{L^2} \Big{)} + \tfrac{1}{2} |\nabla^{k+1} \dr|^2_{L^2} \\
      & - \rho_1 |\nabla^k \dot{\dr} + (\nabla^k \B) \dr + \tfrac{\lambda_2}{\lambda_1} (\nabla^k \A) \dr|^2_{L^2} +  \rho_1 | (\nabla^k \B) \dr + \tfrac{\lambda_2}{\lambda_1} (\nabla^k \A) \dr|^2_{L^2} \\
      =& - 2 \rho_1 \l \nabla^k \dot{\dr} , (\nabla^k \B) \dr + \tfrac{\lambda_2}{\lambda_1} (\nabla^k \A) \dr \r \\
        & - \rho_1 \sum_{\substack{a+b=k \\ a \geq 1}} \l \nabla^a \v \nabla^{b+1} \dot{\dr} , \nabla^k \dr \r - \rho_1 \sum_{\substack{a+b=k \\ a \geq 1}} \l \nabla^a \v \nabla^{b+1} \dr , \nabla^k \dot{\dr} \r \\
      & + \lambda_1 \l \nabla^k ( \v \cdot \nabla \dr ) , \nabla^k \dr \r  + \l \nabla^k (\gamma \dr), \nabla^k \dr \r + \lambda_1 \l \nabla^k (\B \dr) , \nabla^k \dr \r + \lambda_2 \l \nabla^k (\A \dr) , \nabla^k \dr \r \,.
    \end{aligned}
  \end{equation}
  Here we take advantage of the following relations
    \begin{align*}
      \rho_1 \l \nabla^k \ddot{\dr}, \nabla^k \dr \r = & \rho_1 \tfrac{\d}{\d t} \l \nabla^k \dot{\dr} , \nabla^k \dr \r - \rho_1 \l \nabla^k \dot{\dr}, \partial_t \nabla^k \dr \r + \rho_1 \l \nabla^k (\v \cdot \nabla \dot{\dr}), \nabla^k \dr \r \\
      =& \tfrac{1}{2} \tfrac{\d}{\d t}  \big{(}  \rho_1 | \nabla^k \dot{\dr} + \nabla^k \dr |^2_{L^2} -  \rho_1 |\nabla^k \dot{\dr}|^2_{L^2} -  \rho_1 |\nabla^k \dr|^2_{L^2} \big{)} - \rho_1 \l \nabla^k \dot{\dr}, \partial_t \nabla^k \dr \r \\
      & - \rho_1 \l \nabla^k \dot{\dr} , \v \cdot \nabla (\nabla^k \dr) \r + \rho_1 \sum_{\substack{a+b=k \\ a \geq 1}} \l \nabla^a \v \nabla^{b+1} \dot{\dr} , \nabla^k \dr \r \\
      =& \tfrac{1}{2} \tfrac{\d}{\d t}  \big{(}  \rho_1 | \nabla^k \dot{\dr} + \nabla^k \dr |^2_{L^2} -  \rho_1 |\nabla^k \dot{\dr}|^2_{L^2} -  \rho_1 |\nabla^k \dr|^2_{L^2} \big{)} - \rho_1 |\nabla^k \dot{\dr}|^2_{L^2} \\
      &  +  \rho_1 \sum_{\substack{a+b=k \\ a \geq 1}} \l \nabla^a \v \nabla^{b+1} \dot{\dr} , \nabla^k \dr \r + \rho_1 \sum_{\substack{a+b=k \\ a \geq 1}} \l \nabla^a \v \nabla^{b+1} \dr , \nabla^k \dot{\dr} \r \,,
    \end{align*}
  and
  \begin{equation*}
    \begin{aligned}
      - \rho_1 |\nabla^k \dot{\dr}|^2_{L^2} = & - \rho_1  |\nabla^k \dot{\dr} + (\nabla^k \B) \dr + \tfrac{\lambda_2}{\lambda_1} (\nabla^k \A) \dr |^2_{L^2} + \rho_1 | (\nabla^k \B) \dr + \tfrac{\lambda_2}{\lambda_1} (\nabla^k \A) \dr |^2_{L^2} \\
      & + 2 \rho_1 \l \nabla^k \dot{\dr}, (\nabla^k \B) \dr + \tfrac{\lambda_2}{\lambda_1} (\nabla^k \A) \dr \r \,.
    \end{aligned}
  \end{equation*}

  Now, for all $1 \leq k \leq s$, we estimate \eqref{Global-1} term by term.

  It is easy to be derived
    \begin{align}\label{Global-2}
      \no & - 2 \rho_1 \l \nabla^k \dot{\dr}, (\nabla^k \B) \dr + \tfrac{\lambda_2}{\lambda_1} (\nabla^k \A) \dr \r \leq 2 \rho_1 |\nabla^k \dot{\dr}|_{L^2} \cdot ( 1 - \tfrac{|\lambda_2|}{ \lambda_1 } ) |\nabla^{k+1} \v|_{L^2} \\
      \no \leq & 2 \rho_1 \Big{[} | \nabla^k \dot{\dr} + (\nabla^k \B) \dr + \tfrac{\lambda_2}{\lambda_1} (\nabla^k \A) \dr |_{L^2} + ( 1 - \tfrac{|\lambda_2|}{ \lambda_1 } ) |\nabla^{k+1} \v|_{L^2} \Big{]} \cdot ( 1 - \tfrac{|\lambda_2|}{ \lambda_1 } ) |\nabla^{k+1} \v|_{L^2} \\
      \leq& \rho_1 | \nabla^k \dot{\dr} + (\nabla^k \B) \dr + \tfrac{\lambda_2}{\lambda_1} (\nabla^k \A) \dr |^2_{L^2} + 3 \rho_1 ( 1 - \tfrac{|\lambda_2|}{ \lambda_1 } )^2 |\nabla^{k+1} \v|^2_{L^2}\,.
    \end{align}
  By H\"older inequality and Sobolev embedding, we yield
  \begin{equation}\label{Global-3}
    \begin{aligned}
      & - \rho_1 \sum_{\substack{a+b=k \\ a \geq 1}} \l \nabla^a \v \nabla^{b+1} \dot{\dr} , \nabla^k \dr \r  =  - \rho_1 \l \nabla \v \nabla^k \dot{\dr}, \nabla^k \dr \r  - \rho_1 \sum_{\substack{a+b=k \\ a \geq 2}} \l \nabla^a \v \nabla^{b+1} \dot{\dr} , \nabla^k \dr \r \\
      & \lesssim  \rho_1 |\nabla \v|_{L^\infty} |\nabla^k \dot{\dr}|_{L^2} |\nabla^k \dr|_{L^2} + \rho_1 \sum_{\substack{a+b=k \\ a \geq 2}} | \nabla^a \v|_{L^4} | \nabla^{b+1} \dot{\dr} |_{L^4} | \nabla^k \dr |_{L^2} \\
      & \lesssim \rho_1 |\nabla \v|_{H^s} |\dot{\dr}|_{H^s} |\nabla \dr|_{H^s} \,,
    \end{aligned}
  \end{equation}
  and similarly
  \begin{equation}\label{Global-3*}
    - \rho_1 \sum_{\substack{a+b=k \\ a \geq 1}} \l \nabla^a \v \nabla^{b+1} \dr , \nabla^k \dot{\dr} \r \lesssim \rho_1 |\nabla \v|_{H^s} |\dot{\dr}|_{H^s} |\nabla \dr|_{H^s} \,,
  \end{equation}
  and
  \begin{equation}\label{Global-4}
    \begin{aligned}
      & \lambda_1 \l \nabla^k ( \v \cdot \nabla \dr ) , \nabla^k \dr \r = \lambda_1 \sum_{\substack{a+b=k \\ a \geq 1}} \l \nabla^a \v \nabla^{b+1} \dr, \nabla^k \dr \r \\
      \lesssim & |\lambda_1| \sum_{\substack{a+b=k \\ a \geq 1}} |\nabla^a \v|_{L^2} |\nabla^{b+1} \dr|_{L^3} |\nabla^k \dr|_{L^6} \lesssim |\lambda_1| |\nabla \v|_{H^s} |\nabla \dr|_{\dot{H}^s} |\nabla \dr|_{H^s}\,.
    \end{aligned}
  \end{equation}
  Next we estimate the term $\l \nabla^k (\gamma \dr), \nabla^k \dr \r$. Since $\dr \cdot \dr = 1$, we have
  \begin{equation*}
    \begin{aligned}
      |\dr \cdot \nabla^k \dr|_{L^2} \lesssim \tfrac{1}{2} \sum_{\substack{p+q=k \\ p,q\geq 1}} |\nabla^p \dr \nabla^q \dr|_{L^2} \lesssim  \sum_{\substack{p+q=k \\ p,q\geq 1}} |\nabla^p \dr |_{L^3} | \nabla^q \dr|_{L^6} \lesssim |\nabla \dr|_{\dot{H}^s} |\nabla \dr|_{H^s}\,.
    \end{aligned}
  \end{equation*}
  Thus
    \begin{align*}
      & \l \nabla^k (\gamma \dr), \nabla^k \dr \r = \l \nabla^k \gamma, \dr \cdot \nabla^k \dr \r + \sum_{\substack{a+b=k \\ b \geq 1}} \l \nabla^a \gamma \nabla^b \dr, \nabla^k \dr \r \\
      \lesssim & |\nabla^k \gamma|_{L^2} |\dr \cdot \nabla^k \dr|_{L^2} + \sum_{\substack{a+b=k \\ b \geq 1}} | \nabla^a \gamma|_{L^2} | \nabla^b \dr|_{L^3} | \nabla^k \dr |_{L^6} \lesssim |\gamma|_{H^k} |\nabla \dr|_{\dot{H}^s} |\nabla \dr|_{H^s} \,.
    \end{align*}
  Via the estimation
  \begin{equation*}
    \begin{aligned}
      |\nabla^k \gamma|_{L^2} \lesssim & \rho_1 \sum_{a+b=k} |\nabla^a \dot{\dr}|_{L^3} |\nabla^b \dot{\dr}|_{L^6} + \sum_{a+b=k} |\nabla^{a+1} \dr|_{L^3} |\nabla^{b+1} \dr|_{L^6} \\
      & + |\lambda_2| \sum_{\substack{a+b+c=k \\ b,c \geq 1}} |\nabla^{a+1} \v \nabla^b \dr \nabla^c \dr|_{L^2} + |\lambda_2| \sum_{\substack{a+b=k \\ b \geq 1}} |\nabla^{a+1} \v \nabla^b \dr |_{L^2} + |\lambda_2| |\nabla^{k+1} \v|_{L^2} \\
      \lesssim & \rho_1 |\dot{\dr}|^2_{H^k} + |\nabla \dr|_{\dot{H}^k} |\nabla \dr|_{H^k} + |\lambda_2| |\nabla \v|_{H^s} + |\lambda_2| |\nabla \dr|_{\dot{H}^s} |\nabla \v|_{H^s} ( 1 + |\nabla \dr|_{H^s} ) \,,
    \end{aligned}
  \end{equation*}
  we know that
  \begin{equation}\label{Global-5}
    \begin{aligned}
       \l \nabla^k (\gamma \dr), \nabla^k \dr \r \lesssim ( \rho_1 |\dot{\dr}|^2_{H^s} + |\nabla \dr|^2_{\dot{H}^s} ) |\nabla \dr|^2_{H^s} + |\lambda_2| |\nabla \v|_{H^s} |\nabla \dr|_{\dot{H}^s} \sum_{p=1}^3 |\nabla \dr|^p_{H^s}\,.
    \end{aligned}
  \end{equation}

  By utilizing H\"older inequality, Sobolev embedding theory and the interpolation inequality $|f|_{L^4(\R^n)} \lesssim  |f|^{1-\frac{n}{4}}_{L^2(\R^n)} |\nabla f|^\frac{n}{4}_{L^2(\R^n)} $ for $n=2,3$, we have
    \begin{align}\label{Global-6}
      \no & \lambda_1 \l \nabla^k(\B \dr) , \nabla^k \dr \r =  - \lambda_1 \l (\nabla^{k} \B) \dr,  \nabla^{k+1} \dr \r -  \lambda_1 \sum_{\substack{a+b=k \\ b \geq 1}} \l \nabla^a \B \nabla^b \dr , \nabla^k \dr \r \\
      \lesssim & |\lambda_1| |\nabla^k \v|_{L^2} |\nabla^{k+1} \dr|_{L^2} + |\lambda_1| \sum_{\substack{a+b=k \\ b \geq 1}} |\nabla^{a+1} \v|_{L^2} |\nabla^b \dr|_{L^4} |\nabla^k \dr|_{L^4} \\
      \no \lesssim & |\lambda_1| |\nabla^k \v|_{L^2} |\nabla^{k+1} \dr|_{L^2}  + |\lambda_1| \sum_{\substack{a+b=k \\ b \geq 1}} |\nabla^{a+1} \v|_{L^2} |\nabla^b \dr|^{1 - \tfrac{n}{4}}_{L^2} | \nabla^{b+1} \dr |^\frac{n}{4}_{L^2} |\nabla^k \dr|^{1 - \tfrac{n}{4}}_{L^2} | \nabla^{k+1} \dr |^\frac{n}{4}_{L^2} \\
      \no \lesssim & |\lambda_1||\nabla \v|_{H^s} |\nabla \dr|_{\dot{H}^s} + |\lambda_1| |\nabla \v|_{H^s} |\nabla \dr|_{\dot{H}^s} |\nabla \dr|_{H^s}
    \end{align}
  for $n=2,3$, and similarly we have
  \begin{equation}\label{Global-7}
    \begin{aligned}
      \lambda_2 \l \nabla^k (\A \dr), \nabla^k \dr \r \lesssim |\lambda_2||\nabla \v|_{H^s} |\nabla \dr|_{\dot{H}^s} + |\lambda_2| |\nabla \v|_{H^s} |\nabla \dr|_{\dot{H}^s} |\nabla \dr|_{H^s} \,.
    \end{aligned}
  \end{equation}

  Noticing that
  \begin{equation*}
    C ( |\lambda_1| + |\lambda_2| ) |\nabla \v|_{H^s} |\nabla \dr|_{\dot{H}^s} \leq \tfrac{1}{4} |\nabla \dr|^2_{\dot{H}^s} + C^2 ( |\lambda_1| + |\lambda_2|  )^2 |\nabla \v|^2_{H^s} \,,
  \end{equation*}
  and by plugging the inequalities \eqref{Global-2}, \eqref{Global-3}, \eqref{Global-3*}, \eqref{Global-4}, \eqref{Global-5}, \eqref{Global-6}, and \eqref{Global-7} into \eqref{Global-1}, we obtain
  \begin{equation}\label{Global-New-Est}
    \begin{aligned}
      & \tfrac{1}{2} \tfrac{\d}{\d t} \Big{(} - \lambda_1 |\nabla \dr|^2_{H^{s-1}} - \rho_1 |\nabla \dr|^2_{H^{s-1}} + \rho_1 |\dot{\dr} + \dr|^2_{\dot{H}^s} - \rho_1 |\dot{\dr}|^2_{\dot{H}^s} \Big{)} + \tfrac{1}{4} |\nabla \dr|^2_{\dot{H^s}} \\
      & - 2 \rho_1 \sum_{k=1}^s |\nabla^k \dot{\dr} + (\nabla^k \B) \dr + \tfrac{\lambda_2}{\lambda_1} (\nabla^k \A) \dr|^2_{L^2} +  \rho_1 \sum_{k=1}^s | (\nabla^k \B) \dr + \tfrac{\lambda_2}{\lambda_1} (\nabla^k \A) \dr|^2_{L^2} \\
      & - \big{[} 3 \rho_1 ( 1 - \tfrac{|\lambda_2|}{\lambda_1} )^2 + C^2 ( |\lambda_1| + |\lambda_2| )^2 \big{]} |\nabla \v|^2_{H^s} \\
      \lesssim & ( \rho_1 |\dot{\dr}|^2_{H^s} + |\nabla \dr|^2_{\dot{H}^s} ) |\nabla \v|_{H^s} + \rho_1 |\nabla \v|_{H^s} |\dot{\dr}|_{H^s} |\nabla \dr|_{H^s} \\
      & + ( |\lambda_1| + |\lambda_2|  ) |\nabla \v|_{H^s} |\nabla \dr|_{\dot{H}^s} \sum_{p=1}^3  |\nabla \dr|^p_{H^s} \,.
    \end{aligned}
  \end{equation}

  Choosing
  \begin{equation*}
    \eta_0 = \tfrac{1}{2} \min \Big{\{} \tfrac{ \frac{1}{2} \mu_4 }{ 3 \rho_1 ( 1 - \frac{|\lambda_2|}{\lambda_1} )^2 + C^2 ( |\lambda_1| + |\lambda_2| )^2  } \,, \tfrac{- \lambda_1}{ 2 \rho_1 } \,, \tfrac{1}{\rho_1} \,, 1 \Big{\}} \in (0, \tfrac{1}{2} ] \,,
  \end{equation*}
  and multiplying the inequality \eqref{Global-New-Est} by $\eta \in ( 0, \eta_0 ]$ and then adding it to the inequality \eqref{APE-Hs}, we derive
    \begin{align}\label{Global}
      \no & \tfrac{1}{2} \tfrac{\d}{\d t} \Big{(} |\v|^2_{H^s} + ( - \eta \lambda_1 + 1 - \eta \rho_1 ) |\nabla \dr|^2_{H^{s-1}} + |\nabla^{s+1} \dr|^2_{L^2} \\
      \no & \qquad + \rho_1 (1-\eta) |\dot{\dr}|^2_{\dot{H}^s} + \rho_1 |\dot{\dr}|^2_{L^2} + \eta \rho_1 |\dot{\dr} + \dr|^2_{\dot{H}^s} \Big{)} \\
      \no & + \Big{[} \tfrac{1}{2} \mu_4 - \eta \big{[} 3 \rho_1 ( 1 -\tfrac{|\lambda_2|}{\lambda_1} )^2 + C^2 ( |\lambda_1| + |\lambda_2| )^2 \big{]} \Big{]} |\nabla \v|^2_{H^s} + \tfrac{1}{4} \eta |\nabla \dr|^2_{\dot{H}^s} \\
      \no & + ( - \lambda_1 - 2 \eta \rho_1 ) \sum_{k=0}^s |\nabla^k \dot{\dr} + (\nabla^k \B) \dr + \tfrac{\lambda_2}{\lambda_1} (\nabla^k \A) \dr|^2_{L^2} + \mu_1 \sum_{k=0}^s |\dr^\top (\nabla^k \A) \dr|^2_{L^2} \\
       & + ( \mu_5 + \mu_6 + \tfrac{\lambda_2^2}{\lambda_1}  ) \sum_{k=0}^s |(\nabla^k \A) \dr|^2_{L^2} + \eta \rho_1 \sum_{k=1}^s | (\nabla^k \B) \dr + \tfrac{\lambda_2}{\lambda_1} (\nabla^k \A) \dr|^2_{L^2} \\
      \no \lesssim & \big{(} |\v|^2_{\dot{H}^s} + \rho_1 |\dot{\dr}|^2_{H^s} + |\nabla \dr|^2_{\dot{H}^s} \big{)} |\nabla \v|_{H^s} + \rho_1 |\nabla \v|_{H^s} |\dot{\dr}|_{H^s} |\nabla \dr|_{H^s} \\
      \no & + \rho_1 |\dot{\dr}|^3_{H^s} |\nabla \dr|_{H^s} + |\dot{\dr}|_{H^s} |\nabla \dr|_{\dot{H}^s} |\nabla \dr|^2_{H^s} \\
      \no & + ( \mu_1 + |\mu_2| + |\mu_3| + |\mu_5| + |\mu_6| ) \sum_{p=1}^4 |\nabla \dr|^p_{H^s} (  |\v|_{\dot{H}^s} + |\dot{\dr}|_{H^s} + |\nabla \dr|_{\dot{H}^s} ) |\nabla \v|_{H^s}\,.
    \end{align}

  Noticing that
  \begin{equation*}
    |\nabla^k \dot{\dr}|_{L^2} \leq |\nabla^k \dot{\dr} + (\nabla^k \B) \dr + \tfrac{\lambda_2}{\lambda_1} (\nabla^k \A) \dr|_{L^2} + ( 1 - \tfrac{|\lambda_2|}{\lambda_1} ) |\nabla^{k+1} \v|_{L^2} \,,
  \end{equation*}
  then the estimate \eqref{Global} reduce to the conclusion of Lemma \ref{Lm-Global}.

\end{proof}

\noindent{\bf Proof of the second part of Theorem \ref{Main-Thm}: Global existence with small initial data.} We observe that
\begin{equation*}
  C_\# \big{(} |\v|^2_{H^s} + \rho_1 |\dot{\dr}|^2_{H^s} + |\nabla \dr|^2_{H^s} \big{)} \leq \mathcal{E}_\eta (t) \leq C^\# \big{(} |\v|^2_{H^s} + \rho_1 |\dot{\dr}|^2_{H^s} + |\nabla \dr|^2_{H^s} \big{)} \,,
\end{equation*}
and
$$ \mathcal{D}_\eta (t) \geq \tfrac{1}{4} \mu_4 |\nabla \v|^2_{H^s} \,, $$
where the constants $ C^\# = 4 + 2 \eta_0 - \lambda_1 \eta_0 + 2 \rho_1 \eta_0 > 0 $ and $ C_\# = \min \{ 1 , 1 - \eta_0, 1 - \eta_0 \rho_1 \} > 0 $, since $\eta_0 > 0$ is sufficiently small. As a consequence,
$$ C_\# E^{in} \leq \mathcal{E}_\eta (0) \leq C^\# E^{in} \,. $$

We define the following number
$$ T^* = \sup \bigg{\{}  \tau >0; \sup\limits_{t \in [0,\tau]} C \sum\limits_{p=1}^4 \mathcal{E}_\eta^\frac{p}{2} (t) \leq \tfrac{1}{2} \bigg{\}} \geq 0 \,, $$
where the constant $C > 0$ is mentioned as in Lemma \ref{Lm-Global}. We choose the positive number $\eps_0 \equiv \frac{1}{C^\#} \min \Big{\{} 1 , \frac{1}{256 C^2 } \Big{\}} > 0$. If the initial energy $E^{in} \leq \eps_0$, we can deduce that
$$ C \sum\limits_{p=1}^4 \mathcal{E}_\eta^\frac{p}{2} (0) \leq \tfrac{1}{4} < \tfrac{1}{2} \,,  $$
which derives that $ T^* > 0 $ from the continuity of the energy functional $\mathcal{E}_\eta (t)$. Thus for all $t \in [0,T^*]$
\begin{equation*}
  \tfrac{\d}{\d t} \mathcal{E}_\eta (t) + \bigg{[} 1 - C \sum\limits_{p=1}^4 \mathcal{E}_\eta^\frac{p}{2} (t) \bigg{]} \mathcal{D}_\eta (t) \leq 0 \,,
\end{equation*}
which immediately means that $\mathcal{E}_\eta (t) \leq \mathcal{E}_\eta (0) \leq C^\# E^{in}$ holds for all $t \in [0,T^*]$, and consequently
$$  \sup_{t \in [0,T^*]} \bigg{\{} C \sum\limits_{p=1}^4 \mathcal{E}_\eta^\frac{p}{2} (t) \bigg{\}} \leq \tfrac{1}{4}\,.  $$
So we claim that $T^* = + \infty$. Otherwise, if $T^* < + \infty$, the continuity of the energy $\mathcal{E}_\eta (t)$ implies that there is a small positive $\eps > 0$ such that
$$ \sup_{t \in [0,T^* + \eps]} \bigg{\{} C \sum\limits_{p=1}^4 \mathcal{E}_\eta^\frac{p}{2} (t) \bigg{\}} \leq \tfrac{3}{8} < \tfrac{1}{2} \,, $$
which contradicts to the definition of $T^*$. Therefore we gain
$$  \sup_{t \geq 0} \Big{(} |\v|^2_{H^s} + \rho_1 |\dot{\dr}|^2_{H^s} + |\nabla \dr|^2_{H^s} \Big{)} (t) + \tfrac{1}{2} \mu_4 \int_0^\infty |\nabla \v|^2_{H^s} \d t \leq C_3 E^{in} \,,$$
where $C_3 >0$ depends only on the Leslie coefficients and inertia density constant $\rho_1$, and as a consequence, the proof of the second part of Theorem \ref{Main-Thm} is finished.

\section{Conclusions}

In this paper, we prove local well-posedness of the small solutions to the Ericksen-Leslie's hyperbolic liquid crystal model \eqref{PHLC} under the assumptions on the Leslie coefficients which ensure the dissipation of the basic energy law. Then with an additional assumption $\lambda_1 < 0$, we prove the global existence of the smooth solutions. In fact, this paper is only the starting point of the research of the general system \eqref{PHLC} in high dimension. Many problems are left to be investigated in future. For examples:
\begin{enumerate}
  \item The assumption $\lambda_1< 0$ excludes the most important special case \eqref{NS-WM}, which is the coupled incompressible Navier-Stokes equations with the wave map to unit sphere. For that case, $\lambda_1 = 0$. We expect that the global smooth solutions with small initial data should exist.

  \item Physically, the inertial constant $\rho_1$ is small. Formally, letting $\rho_1=0$ will gives the parabolic Ericksen-Leslie's system which has been studied intensively in the past 30 years. Justification of the limit $\rho_1\rightarrow 0$ from hyperbolic to parabolic Ericksen-Leslie's system will be an interesting question. In a paper under preparation joint with S-J. Tang and A. Zarnescu \cite{Jiang-Luo-Tang-Zarnescu}, we can justify this limit for $\v(x,t)\equiv 0$, i.e. from wave map type to the heat flow type equation in the context of smooth solutions local in time. The initial layer will arise in this limit.

  \item The existence of the global weak solution of \eqref{PHLC} is another interesting question. The geometric constraint $|\dr|=1$ will bring serious analytical difficulty.
\end{enumerate}


\section*{Acknowledgement}
We first appreciate Prof. Fanghua Lin, who suggested the Ericksen-Leslie's hyperbolic liquid crystal model to us when we visited  the NYU-ECNU Institute of Mathematical Sciences at NYU Shanghai during the spring semester of 2015. We also thanks the hospitality of host institute. We also thanks for the conversations with Zhifei Zhang and Wei Wang, in particular on their work \cite{Wang-Zhang-Zhang-ARMA2013}, which shares light on the current work. During the preparation of this paper, Xu Zhang made several valuable comments and suggestions, we take this opportunity to thank him here.

\bigskip


\begin{thebibliography}{99}

\bibitem{Bressan-Zhang-Zheng-ARMA2007} A. Bressan, P. Zhang and Y-X. Zheng,
Asymptotic variational wave equations. {\em Arch. Ration. Mech. Anal.} {\bf 183} (2007), no. 1, 163-185.

\bibitem{Chen-Zhang-Zheng-ARMA2013} G. Chen, P. Zhang and Y-X. Zheng,
Energy conservative solutions to a nonlinear wave system of nematic liquid crystals. {\em Commun. Pure Appl. Anal.} {\bf 12} (2013), no. 3, 1445-1468.

\bibitem{DeAnna-Zarnescu-2016} F. De Anna and A. Zarnescu,
Global well-posedness and twist-wave solutions for the inertial Qian-Sheng model of liquid crystals. arXiv:1608.08872

\bibitem{Ericksen-1961-TSR}J. L. Ericksen,
Conservation laws for liquid crystals. {\em Trans. Soc. Rheology} {\bf 5} 1961, 23-34.

\bibitem{Ericksen-1987-RM} J. L. Ericksen,
Continuum theory of nematic liquid crystals. {\em Res. Mechanica} {\bf 21}, (1987), 381-392.

\bibitem{Ericksen-1990-ARMA} J. L. Ericksen,
Liquid crystals with variable degree of orientation. {\em Arch. Rational Mech. Anal.} {\bf 113} (1990), no. 2, 97-120.

\bibitem{FRSZ-2016}E. Feireisl, E. Rocca, G. Schimperna and A. Zarnescu,
On a hyperbolic system arising in liquid crystals modeling. arXiv:1610.07828

\bibitem{Hardt-Kinderlehrer-Lin-CMP1986} R. Hardt,  D. Kinderlehrer and  F-H. Lin,
 Existence and partial regularity of static liquid crystal configurations. {\em Comm. Math. Phys.} {\bf 105} (1986), no. 4, 547-570.

\bibitem{Hong-Xin-2012} M-C. Hong and Z-P. Xin,
Global existence of solutions of the liquid crystal flow for the Oseen-Frank model in $\mathbb{R}^2$. {\em Adv. Math.} {\bf 231} (2012), no. 3-4, 1364-1400.

\bibitem{Huang-Lin-Wang-CMP2014} J-R. Huang, F-H. Lin and C-Y. Wang,
Regularity and existence of global solutions to the Ericksen-Leslie system in $\mathbb{R}^2$. {\em Comm. Math. Phys.} {\bf 331} (2014), no. 2, 805-850.

\bibitem{Huang-Lin-Liu-Wang-ARMA2016} T. Huang, F-H. Lin, C. Liu and C-Y. Wang,
Finite time singularity of the nematic liquid crystal flow in dimension three. {\em Arch. Ration. Mech. Anal.} {\bf 221} (2016), no. 3, 1223-1254.

\bibitem{Jiang-Luo-Tang-Zarnescu} N. Jiang, Y-L. Luo, S-J. Tang and A. Zarnescu,
Zero inertial limit of the Ericksen-Leslie's hyperbolic liquid crystal model. Under preparation.

\bibitem{Leslie-1968-ARMA} F. M. Leslie,
Some constitutive equations for liquid crystals. {\em Arch. Rational Mech. Anal.} {\bf 28} (1968), no. 4, 265-283.

\bibitem{Leslie-1979} F. M. Leslie,
Theory of flow phenomena in liquid crystals. {\em The Theory of Liquid Crystals} Vol. 4, pp. 1-81. Academic Press, London-New York, 1979.

\bibitem{Li-Titi-Xin} J-K. Li, E. Titi and Z-P. Xin,
On the uniqueness of weak solutions to the Ericksen-Leslie liquid crystal model in $\mathbb{R}^2$. {\em Math. Models Methods Appl. Sci.} {\bf 26} (2016), no. 4, 803-822.

\bibitem{Lin-Lin-Wang-ARMA2010} F-H. Lin, J-Y. Lin and C-Y. Wang,
Liquid crystal flows in two dimensions. {\em Arch. Ration. Mech. Anal.} {\bf 197} (2010), no. 1, 297-336.

\bibitem{Lin-Liu-CPAM1995} F-H. Lin and C. Liu,
Nonparabolic dissipative systems modeling the flow of liquid crystals. {\em Comm. Pure Appl. Math.} {\bf 48} (1995), no. 5, 501-537.

\bibitem{Lin-Liu-DCDS1996} F-H. Lin and C. Liu,
Partial regularity of the dynamic system modeling the flow of liquid crystals. {\em Discrete Contin. Dynam. Systems} {\bf 2} (1996), no. 1, 1-22.

\bibitem{Lin-Liu-ARMA2000} F-H. Lin and C. Liu,
Existence of solutions for the Ericksen-Leslie system. {\em Arch. Ration. Mech. Anal.} {\bf 154} (2000), no. 2, 135-156.

\bibitem{Lin-Liu-2001} F-H. Lin and C. Liu,
Static and dynamic theories of liquid crystal. {\em J. Partial Differential Equations}, {\bf 14} (2001), {\bf no.} 4, 289-330.

\bibitem{Lin-Wang-CAMS2010} F-H. Lin and C-Y. Wang,
On the uniqueness of heat flow of harmonic maps and hydrodynamic flow of nematic liquid crystals. {\em Chin. Ann. Math. Ser. B} {\bf 31} (2010), no. 6, 921-938.

\bibitem{Lin-Wang-CPAM2016} F-H. Lin and C-Y. Wang,
Global existence of weak solutions of the nematic liquid crystal flow in dimension three. {\em Comm. Pure Appl. Math.} {\bf 69} (2016), no. 8, 1532-1571.

\bibitem{Lions-1996} P-L. Lions. {\em Mathematical Topics in Fluid Mechanics, Vol. 1 : Incompressible Models}. Clarendon Press. Oxford, 1996.

\bibitem{Liu-Liu-Pego-CPAM2007} J-G. Liu, J. Liu and R. Pego,
Stability and convergence of efficient Navier-Stokes solvers via a commutator estimate. {\em Comm. Pure Appl. Math.} {\bf 60} (2007), no. 10, 1443-1487.

\bibitem{Majda-2002-BOOK} A. J. Majda and A. L. Bertozzi,
{\em Vorticity and incompressible flow}, Cambridge University Press, 2002.

\bibitem{Wang-Wang-2014} M. Wang and W-D. Wang,
Global existence of weak solution for the 2-D Ericksen-Leslie system. {\em Calc. Var. Partial Differential Equations} {\bf 51} (2014), no. 3-4, 915-962.

\bibitem{Wang-Wang-Zhang-DCDS2016} M. Wang, W-D. Wang and Z-F. Zhang,
On the uniqueness of weak solution for the 2-D Ericksen-Leslie system. {\em Discrete Contin. Dyn. Syst. Ser. B} {\bf 21} (2016), no. 3, 919-941.

\bibitem{Wang-Zhang-Zhang-ARMA2013} W. Wang, P-W. Zhang and Z-F. Zhang,
Well-posedness of the Ericksen-Leslie system. {\em Arch. Ration. Mech. Anal.} {\bf 210} (2013), no. 3, 837-855.

\bibitem{Wu-Xu-Liu-ARMA2013} H. Wu, X. Xu and C. Liu,
On the general Ericksen-Leslie system: Parodi's relation, well-posedness and stability. {\em Arch. Ration. Mech. Anal.} {\bf 208} (2013), no. 1, 59-107.

\bibitem{Xu-Zhang-JDE2012} X. Xu and Z-F. Zhang,
Global regularity and uniqueness of weak solution for the 2-D liquid crystal flows. {\em J. Differential Equations} {\bf 252} (2012), no. 2, 1169-1181.

\bibitem{Zhang-Zheng-AA1998} P. Zhang and Y-X. Zheng,
On oscillations of an asymptotic equation of a nonlinear variational wave equation. {\em Asymptot. Anal.} {\bf 18} (1998), no. 3-4, 307-327.

\bibitem{Zhang-Zheng-ActaC1999} P. Zhang and Y-X. Zheng,
On the existence and uniqueness of solutions to an asymptotic equation of a variational wave equation. {\em Acta Math. Sin. (Engl. Ser.)} {\bf 15} (1999), no. 1, 115-130.

\bibitem{Zhang-Zheng-ARMA2000} P. Zhang and Y-X. Zheng,
Existence and uniqueness of solutions of an asymptotic equation arising from a variational wave equation with general data. {\em Arch. Ration. Mech. Anal.} {\bf 155} (2000), no. 1, 49-83.

\bibitem{Zhang-Zheng-CAMS2001} P. Zhang and Y-X. Zheng,
Singular and rarefactive solutions to a nonlinear variational wave equation. {\em Chinese Ann. Math. Ser. B} {\bf 22} (2001), no. 2, 159-170.

\bibitem{Zhang-Zheng-CPDE2001} P. Zhang and Y-X. Zheng,
Rarefactive solutions to a nonlinear variational wave equation of liquid crystals. {\em Comm. Partial Differential Equations} {\bf 26} (2001), no. 3-4, 381-419.

\bibitem{Zhang-Zheng-PRSE2002} P. Zhang and Y-X. Zheng,
On the second-order asymptotic equation of a variational wave equation. {\em Proc. Roy. Soc. Edinburgh Sect. A} {\bf 132} (2002), no. 2, 483-509.

\bibitem{Zhang-Zheng-ARMA2003} P. Zhang and Y-X. Zheng,
Weak solutions to a nonlinear variational wave equation. {\em Arch. Ration. Mech. Anal.} {\bf 166} (2003), no. 4, 303-319.

\bibitem{Zhang-Zheng-AIPA2005} P. Zhang and Y-X. Zheng,
Weak solutions to a nonlinear variational wave equation with general data. {\em Ann. Inst. H. Poincar\'e Anal. Non Lin\'eaire} {\bf 22} (2005), no. 2, 207-226.

\bibitem{Zhang-Zheng-ARMA2010} P. Zhang and Y-X. Zheng,
Conservative solutions to a system of variational wave equations of nematic liquid crystals. {\em Arch. Ration. Mech. Anal.} {\bf 195} (2010), no. 3, 701-727.

\bibitem{Zhang-Zheng-CPAM2012} P. Zhang and Y-X. Zheng,
Energy conservative solutions to a one-dimensional full variational wave system. {\em Comm. Pure Appl. Math.} {\bf 65} (2012), no. 5, 683-726.




\end{thebibliography}


\end{document}